\documentclass[11pt,a4paper,reqno]{amsart}

\usepackage[a4paper,lmargin=2cm,rmargin=2cm,tmargin=4cm,bmargin=4cm]{geometry}

\usepackage[english]{babel}

\usepackage[centertags]{amsmath}
\usepackage{amsfonts}
\usepackage{amssymb}
\usepackage{amsthm}
\usepackage{dsfont}

\usepackage[dvipsnames]{xcolor}

\usepackage[normalem]{ulem}
\usepackage{bbm}

\usepackage{hyperref}
	\hypersetup{breaklinks=true,colorlinks=true,
linkcolor=MidnightBlue,citecolor=MidnightBlue,
urlcolor=MidnightBlue}
\usepackage{enumerate}

\usepackage{tikz-cd,graphicx}
\usepackage{tikz}
\usepackage{subcaption}
\usetikzlibrary{shapes.geometric, arrows}
\usetikzlibrary{positioning,arrows,calc}
\tikzstyle{startstop} = [rectangle, rounded corners, minimum width=2cm, minimum height=1cm,text centered,text width=2cm, draw=black]
\tikzstyle{io} = [rectangle, rounded corners, minimum width=2cm, minimum height=1cm,text centered,text width=2cm, draw=black]

\tikzstyle{arrow} = [thick,->,>=stealth]
\tikzstyle{arrow2} = [dashed,->,>=stealth]

\usepackage{dot2texi}

\numberwithin{equation}{section}
\usepackage{enumerate}
\usepackage{mathtools}

\usepackage[shortlabels]{enumitem}

\newcommand{\abs}[1]{\left\lvert#1\right\rvert}
\newcommand{\norm}[1]{\left\lVert#1\right\rVert}

\newcommand{\N}{\mathbb{N}}
\newcommand{\M}{\mathbb{M}}

\newcommand{\A}{\mathcal{A}}
\newcommand{\B}{\mathcal{B}}

\newcommand{\eps}{\varepsilon}

\newcommand{\scal}[1]{\left\langle#1\right\rangle}

\newcommand{\nnorm}[1]{{\left\vert\kern-0.25ex\left\vert\kern-0.25ex\left\vert #1%
    \right\vert\kern-0.25ex\right\vert\kern-0.25ex\right\vert}}

\DeclareMathOperator{\dent}{dent}
\DeclareMathOperator{\Id}{Id}
\DeclareMathOperator{\diam}{diam}

\DeclareMathOperator{\supp}{supp}
\DeclareMathOperator{\conv}{conv}

\DeclareMathOperator{\cconv}{\overline{conv}}

\DeclareMathOperator{\SCD}{SCD}

\DeclareMathOperator{\sign}{sign}
\DeclareMathOperator{\spn}{span}

\DeclareMathOperator{\ext}{ext}
\DeclareMathOperator{\strexp}{str-exp}

\DeclareMathOperator{\PC}{PC}

\newcommand{\T}{\mathbb{T}}

\renewcommand{\geq}{\geqslant}
\renewcommand{\leq}{\leqslant}

\theoremstyle{plain}
\newtheorem{theorem}{Theorem}[section]
\newtheorem{lemma}[theorem]{Lemma}
\newtheorem{proposition}[theorem]{Proposition}
\newtheorem{corollary}[theorem]{Corollary}

\theoremstyle{definition}

\newtheorem*{definition*}{Definition}
\newtheorem{remark}[theorem]{Remark}

\newtheorem{question}[theorem]{Question}

\begin{document}

\title{A Banach space with an unconditional basis which is not slicely countably determined}

\author[L\~{o}o]{Marcus L\~{o}o}
\address[L\~{o}o]{Institute of Mathematics and Statistics, University of Tartu, Narva mnt 18, 51009 Tartu, Estonia}
\email{marcus.loo@ut.ee}
\urladdr{
\href{https://orcid.org/0009-0003-1306-5639}{ORCID: \texttt{0009-0003-1306-5639} } }

\author[Perreau]{Yo\"el Perreau}
\address[Perreau]{Institute of Mathematics and Statistics, University of Tartu, Narva mnt 18, 51009 Tartu, Estonia}
\email{yoel.perreau@ut.ee}
\urladdr{
\href{https://orcid.org/0000-0002-2609-5509}{ORCID: \texttt{0000-0002-2609-5509} } }

\subjclass[2020]{Primary 46B04; Secondary 46B20, 46B22, 46B25}

\keywords{Slicely countably determined Banach spaces, Slicely countably determined sets, Countable $\pi$-bases, Unconditional bases, Daugavet property}


\begin{abstract}
In this note, we study the geometry of the unit ball of the Banach space generated by the adequate family of all subsets of branches of the infinite binary tree, and answer several open questions related to slicely countably determined Banach spaces. Our main result is that the binary tree space is an example of a Banach space with an unconditional basis which fails to be slicely countably determined. In particular, it provides an example of a non slicely countably determined separable Banach space which contains no isomorphic copy of a space with the Daugavet property. We also exhibit some other geometric features of this space: we prove that its unit ball is dentable, that it has numerical index~1, and that the points of continuity of its unit ball form a weakly dense set.  Finally, we show that the binary tree space contains a non-convex subset which is slicely countably determined, but does not admit a countable $\pi$-base for its relative weak topology, and that there is a 2-equivalent renorming of this space whose unit ball fails to be slicely countably determined. 
\end{abstract}

\maketitle

\section{Introduction}

Slicely countably determined Banach spaces (SCD spaces for short) were introduced in \cite{AKMMS10} to provide a class of separable Banach spaces that would contain both spaces with the Radon--Nikod\'ym property (RNP for short) and spaces not containing an isomorphic copy of the space $\ell_1$, as well as other natural spaces constructed out of these such as $\ell_1(c_0)$ or $c_0(\ell_1)$ (see Section~\ref{section:preliminaries} for definitions). Incidentally, the corresponding property for sets and operators also provided a powerful tool for the study of some classical properties of Banach spaces such as the Daugavet and alternative Daugavet properties, numerical index~1, or spear operators. Given a (real or complex) Banach space $X$, we denote its topological dual as $X^*$, and its unit ball and unit sphere as $B_X$ and $S_X$, respectively. Recall that $X$ has the \emph{Daugavet property} if every rank 1 operator $T:X\to X$ satisfies the equation \begin{equation*}
    \norm{\Id+T}=1+\norm{T},
\end{equation*}
and the \emph{alternative Daugavet property} if every rank 1 operator $T:X\to X$ satisfies the equation
\begin{equation*}
    \max_{\theta\in \T}\norm{\Id+\theta T}=1+\norm{T},
\end{equation*} 
where $\Id$ denotes the identity operator of $X$ and $\mathbb{T}$ the set of modulus one scalars. Also, the Banach space $X$ is said to have \emph{numerical index~1} if every bounded linear operator $T:X\to X$ has numerical radius~1, i.e. satisfies the equation \begin{equation*}
    \norm{T}=\sup\{\abs{f(Tx)}\colon x\in S_X,\ f\in S_{X^*},\ f(x)=1\}.
\end{equation*}
Spaces with numerical index~1 satisfy the alternative Daugavet property, but the converse is known to be false in general. However, when the unit ball $B_X$ of the considered space $X$ is an SCD set, then the latter two properties coincide (and in fact coincide with a stronger geometric property called lushness, see \cite{AKMMS10} for more details). We refer to the monographs \cite{KMMP,KMRZW25} for a detailed study of these properties and their applications to the study of bounded linear operators and the geometry of Banach spaces. 

So far, the only known property preventing a separable Banach space from being SCD is the Daugavet property. In particular, the following questions were raised in \cite{AKMMS10} and subsequently in \cite{KPW18}.

\begin{question}\label{question:non_SCD_implies_isomorphic_copy_of_Daugavet_space}
    Let $X$ be a separable Banach space which fails to be SCD. Does $X$ have to contain an isomorphic copy of a Banach space with the Daugavet property?
\end{question}
\begin{question}\label{question:unconditional_basis_implies_SCD}
    Let $X$ be a Banach space with an unconditional basis. Is $X$ necessarily an SCD space? 
\end{question}
It is a well known fact that spaces with the Daugavet property do not embed isomorphically into Banach spaces with an unconditional basis, so a negative answer to Question \ref{question:unconditional_basis_implies_SCD} would also answer Question \ref{question:non_SCD_implies_isomorphic_copy_of_Daugavet_space}. The authors of \cite{AKMMS10} provided many examples for which the answer to the latter question was positive (e.g. shrinking and boundedly complete unconditional sums of SCD spaces), and it was later proved in \cite{KMMW2013} that if $X$ is a Banach space with a 1-unconditional basis, then the unit ball $B_X$ of $X$ is an SCD set. In particular, numerical index~1 and the alternative Daugavet property coincide in this context. Nevertheless, we will provide a negative answer to Question~\ref{question:unconditional_basis_implies_SCD}, hence also to Question~\ref{question:non_SCD_implies_isomorphic_copy_of_Daugavet_space}, by producing an example of a Banach space with a 1-unconditional basis whose positive unit ball is not an SCD set.

Our target will be the so-called binary tree space $X_T$, which can be seen as the Banach space generated by the adequate family of subsets of branches of an infinite binary tree $T$ in the sense of Talagrand (see \cite{Tal79} and \cite{Tal84}). There are several reasons that make this space particularly relevant to Question~\ref{question:unconditional_basis_implies_SCD}. First and foremost, this space is connected dually to the stopping time Banach space, introduced by Rosenthal in an unpublished manuscript, and which was shown by Bang and Odell in \cite{BO89} to be universal for spaces with a 1-unconditional basis and isometric embeddings. In particular, the latter implies that the existence of a Banach space with an unconditional basis which would fail to be SCD is actually equivalent to the fact that $X_T$ fails to be SCD. Second, a modified version $X_\mathfrak{M}$ of the binary tree was shown in \cite{ALMT} to have some striking geometric properties, namely it has a 1-unconditional basis and contains a weakly dense subset of Daugavet points (see \cite{AHLP} for the definition of Daugavet points), pointing towards a potential non-SCDness in the space. Our main result is the following.

\begin{theorem}\label{intro_theorem:the_binary_tree_space_is_not_SCD}
    The positive unit ball $B_{X_T}^+$ of the binary tree space is not an SCD set. In particular, $X_T$ is a space with an unconditional basis which fails to be SCD.
\end{theorem}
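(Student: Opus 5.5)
We will work directly with the definition of SCD sets. A bounded convex set $A$ is SCD if there is a sequence of relatively weakly open subsets (equivalently, slices) $(V_n)$ of $A$ such that every sequence $(x_n)$ with $x_n\in V_n$ satisfies $A\subseteq\cconv\{x_n\}$. Equivalently, every sequence $(W_n)$ of nonempty relatively weakly open subsets of $A$ is dominated in the following sense: there is a sequence $(V_n)$ such that each $W_k$ contains some $V_n$, and whenever $x_n\in V_n$ we have $A\subseteq\cconv\{x_n\}$. To show that $B_{X_T}^+$ is not SCD, we take an arbitrary sequence $(V_n)$ of nonempty relatively weakly open subsets of $B_{X_T}^+$. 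We then construct points $x_n\in V_n$ and a norm one functional $f$, together with $\delta>0$, such that $f(x_n)\le 1-\delta$ for all $n$ while $\sup f(B_{X_T}^+)=1$. Then $\cconv\{x_n\}$ misses a slice of $B_{X_T}^+$.

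Recall that $X_T$ is the completion of $c_{00}(T)$ under $\norm{x}=\sup_{S}\sum_{t\in S}\abs{x(t)}$, where the supremum runs over segments (subsets of branches). The natural norming functionals are the indicators $\mathbbm{1}_\beta$ of infinite branches $\beta$. The plan has three steps.

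\textbf{Step 1 (localizing weak neighbourhoods).} Each nonempty relatively weakly open subset of $B_{X_T}^+$ contains a finitely supported positive $y$ with $\norm{y}<1$, together with a set of the form $\{y+z\}$ where $z\ge 0$ is supported below a fixed level $N$ and is small on finitely many prescribed functionals. The unconditional basis lets us push mass far down the tree: for every $\varepsilon$ and every node $s$ at level $\ge N$, vectors $y+z$ with $z$ supported in the subtree of $s$ can be arranged to lie in the given weak neighbourhood. The arrangement is to spread $z$ over many incomparable nodes, so that every fixed functional in $X_T^*$ sees it only weakly. This mimics the Daugavet-point technique of \cite{ALMT}: a vector of norm close to $1$ given by a large antichain with uniform weights tends weakly to $0$ as the antichain grows.

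\textbf{Step 2 (diagonal construction of the branch).} We inductively choose nodes $t_1<t_2<\dots$ along a branch, levels $N_1<N_2<\dots$, and points $x_n\in V_n$. At stage $n$ we select $x_n=y_n+z_n$ with the following properties. The vector $y_n$ is finitely supported and $\norm{y_n}<1-\delta_n$. The vector $z_n$ has norm close to $1$ but is spread over a large antichain below level $N_n$, avoiding the subtree of the next node $t_{n+1}$. Having fixed $x_n$, we choose $t_{n+1}$ deep and away from $\supp z_n$. The branch $\beta$ determined by $(t_n)$ then meets $\supp z_n$ in at most finitely many nodes of tiny weight, and it meets $\supp y_n$ with total mass at most $\norm{y_n}$.

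\textbf{Step 3 (the functional).} Plain $\mathbbm{1}_\beta$ alone does not give a uniform gap $\delta$, since $\mathbbm{1}_\beta(y_n)$ could approach $1$. The fix is to use instead a functional $f$ that restricts a branch functional to a tail segment, or averages over a perfect set of branches. The averaged choice is $f=\int\mathbbm{1}_\beta\,d\mu(\beta)$ for the uniform measure $\mu$ on branches through a suitable subtree. It satisfies $\norm{f}=1$ and $\sup f(B^+)=1$, attained by mass concentrated deep on any branch. Meanwhile $f(x_n)$ stays bounded away from $1$, because each $x_n$ is either spread (so $\mu$ sees little of it) or finitely supported at shallow levels, where the subtree was chosen to be thin.

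The main obstacle is Step 2/3: we need a uniform $\delta$ even though the $V_n$ may force points with a large deterministic finitely supported part $y_n$. My first attempt is to exploit the fact that the $V_n$ only control finitely many coordinates and functionals. We then choose the subtree for $\mu$ inductively so that it branches off from $\supp y_n$ at each stage, while still containing vectors realizing norm nearly $1$ in $f$.
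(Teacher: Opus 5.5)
\textbf{The proposal has a genuine gap.} It is exactly the obstacle you flag at the end: with the functionals you propose, no uniform gap $\delta$ can be obtained, however the points are chosen.

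All of your candidates ($\mathbbm{1}_\beta$, its tails, and averages $\int\mathbbm{1}_\beta\,d\mu(\beta)$) are positive functionals. Let $\alpha$ be a finite maximal antichain of $T$. The point $\mathbbm{1}_\alpha=\sum_{s\in\alpha}e_s$ lies in $B_X^+$ and is strongly exposed by $f_\alpha:=\frac{1}{\abs{\alpha}}\sum_{s\in\alpha}e_s^*$.
\begin{itemize}
\item Every branch meets $\alpha$ exactly once, so $\int\mathbbm{1}_\beta\,d\mu(\beta)$ equals $1$ at every $\mathbbm{1}_\alpha$, whatever subtree carries $\mu$.
\item More generally, every positive $f$ with $\norm{f}=1$ satisfies $\sup_\alpha f(\mathbbm{1}_\alpha)=1$. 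Indeed, $\norm{f}=\sup f(B_X^+)$, and $B_X^+$ is the closed convex hull of $0$ and the indicators of finite antichains. Enlarging a finite antichain to a finite maximal one can only increase a positive functional.
\end{itemize}

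Now let $(V_n)$ enumerate the countable family of slices $S(B_X^+,f_\alpha,1/k)$, where $\alpha$ ranges over finite maximal antichains and $k\in\N$. Any selection satisfies $x_{\alpha,k}\to\mathbbm{1}_\alpha$ in norm. Hence for every positive norm-one $f$ and every $\delta>0$, some $x_n$ has $f(x_n)>1-\delta$. So no top slice of a positive functional is ever missed.

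The same family also breaks Step~2. Once $x_n$ is forced to be norm-close to $\mathbbm{1}_\alpha$, which vanishes at deep levels, the part of $x_n$ supported deep in the tree has norm at most $\norm{x_n-\mathbbm{1}_\alpha}$. So there is no room for a spread-out $z_n$ of norm close to $1$. Moreover, no branch can avoid $\supp(\mathbbm{1}_\alpha)$, so the idea of choosing the subtree for $\mu$ to ``branch off'' from the supports cannot work.

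\textbf{The missing idea.} The slice to be missed should contain $0$, not sit at the top of $B_X^+$. Correspondingly, the selected points should pile up mass along a common chain rather than be spread out so as to look weakly small.

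The paper first shows that $B_X^+$ behaves like a ``Daugavet set''. Given $x\in S_X^+$ and a slice $S$ of $B_X^+$, take:
\begin{itemize}
\item a finitely supported $y\in S$;
\item a finite chain $A$ with $\sum_{t\in A}x(t)=1$;
\item a finite chain $B\succ A$ beyond which $y$ vanishes;
\item an antichain $(\alpha(n))\succ B$.
\end{itemize}
Set $\lambda=\sum_{t\in A\cup B}y(t)$. The points $y+(1-\lambda)e_{\alpha(n)}$ converge weakly to $y$, so one of them lies in $S$. Its sum with $x$ has norm $2$, computed along the chain $A\cup B\cup\{\alpha(n)\}$.

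Iterating this gives $x_n\in S_n$ with $\norm{x_1+\dots+x_n}>n-\eps$. Take functionals norming these partial sums and a weak$^*$ cluster point $f$ of them. Then $f(x_n)\geq 1-\eps$ for all $n$. So the slice $\{x\in B_X^+\colon f(x)<1-\eps\}$, which contains $0$, misses $\cconv\{x_n\}$.
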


We will also show that the symmetrization of the set $B_{X_T}^+$ fails to be SCD, which in particular yields the following.

\begin{theorem}\label{intro_theorem:a_Banach_space_with_2-unconditional_basis_whose_unit_ball_is_not_SCD}
    There exists a Banach space $Y\simeq X_T$ with a 2-unconditional basis such that $B_Y$ is not an SCD set.
\end{theorem}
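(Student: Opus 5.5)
The plan is to take $Y$ to be $X_T$ equipped with the Minkowski functional of the symmetrization $K:=\conv\bigl(B_{X_T}^+\cup(-B_{X_T}^+)\bigr)$. The statement then reduces to two facts: $K$ is the unit ball of an equivalent norm $\nnorm{\cdot}$ for which the canonical basis is 2-unconditional, and $K$ is not an SCD set. The second fact is the promised result that the symmetrization of $B_{X_T}^+$ fails to be SCD, which I assume is established later via Theorem~\ref{intro_theorem:the_binary_tree_space_is_not_SCD}.

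\textbf{Equivalence of norms.} Recall that in $X_T$ the norm of $x=\sum a_t e_t$ is the supremum over finite sets $S$ of pairwise incomparable segments of $\sum_{s\in S}\bigl|\sum_{t\in s}a_t\bigr|$, equivalently $\sup_A\sum_{t\in A}|a_t|$ over admissible sets $A$. Since $\norm{x}$ depends only on $|x|$, write $x=x^+-x^-$, where $x^\pm\geq 0$ have disjoint supports. By 1-unconditionality, $\norm{x^\pm}\leq\norm{x}$. If $\norm{x}\leq 1$, then $x=\tfrac12(2x^+)+\tfrac12(-2x^-)$ with $2x^\pm\in 2B^+_{X_T}$, so $B_{X_T}\subseteq 2K$. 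Conversely, $K\subseteq B_{X_T}$ because $B_{X_T}$ is convex and symmetric. Hence $\norm{\cdot}\leq\nnorm{\cdot}\leq 2\norm{\cdot}$. Moreover $K$ is closed: a convex combination $\lambda u-(1-\lambda)v$ with $u,v\in B^+_{X_T}$ is controlled by the norm, and a compactness argument on $\lambda$ combined with weak compactness or coordinatewise limits of the bounded positive parts should handle limits. If this closedness step is delicate, I would instead replace $K$ by its closure and note that the SCD failure is stated for that set.

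\textbf{2-unconditionality.} For signs $\varepsilon$, let $M_\varepsilon$ be the diagonal sign change. Then $\nnorm{M_\varepsilon x}\leq 2\norm{M_\varepsilon x}=2\norm{x}\leq 2\nnorm{x}$, which gives 2-unconditionality directly from the equivalence of norms. The canonical vectors remain a basis, since the norms are equivalent.

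\textbf{Main obstacle.} The substantive step is showing that $K$ is not SCD. Here I expect the argument to reuse the non-SCD witness for $B^+_{X_T}$. Given any countable family of relatively weakly open subsets of $K$, one intersects them with the positive cone, or projects them via $x\mapsto x^+$ onto the positive part, to obtain a countable family for $B^+_{X_T}$. The positivity-preserving structure should then transfer the failure of determination back to $K$. The care needed is that slices of $K$ near points of $B^+_{X_T}$ must be comparable to slices of $B^+_{X_T}$, which is where the specific branch structure enters.
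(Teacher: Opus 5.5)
\textbf{There is a genuine gap: you assume rather than prove that $K$ is not SCD, and that is the whole content of the theorem.} Your norm-equivalence and 2-unconditionality steps are correct and are the paper's. The paper takes $Y=(X_T,\nnorm{\cdot})$ with $\nnorm{\cdot}$ the gauge of $C:=\cconv(B_X^+\cup B_X^-)$, observes $\frac12 B_X\subseteq C\subseteq B_X$, and reads off 2-unconditionality exactly as you do. Your closedness worry is moot: by lattice monotonicity of the norm, $K=\{x\colon \norm{x^+}+\norm{x^-}\leq 1\}$, which is closed.

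The non-SCD property of $K$ does not follow from Theorem~\ref{intro_theorem:the_binary_tree_space_is_not_SCD} by any soft transfer. We have $B_X^+\subseteq K\subseteq B_X$, and $B_X$ \emph{is} SCD (it is dentable), so non-SCDness is not inherited by supersets. Worse, the witness behind Theorem~\ref{thm:binary_tree_space_is_not_SCD} is the point $0$: the Daugavet-set argument puts all selected points in $\{f\geq 1-\eps\}$. But $0$ is an SCD point of $K$. The vectors $\pm e_\emptyset$ are strongly exposed points of $B_X$ lying in $K$, hence denting points of $K$, and $\SCD(K)$ is convex. Your suggested mechanisms also fail as stated. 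The map $x\mapsto x^+$ is neither affine nor weakly continuous, so it does not carry slices of $K$ to slices of $B_X^+$. Intersecting with the positive cone discards the slices of $K$ that miss $B_X^+$, yet points selected from those slices still enter the convex combinations.

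The paper supplies the missing ideas by localizing to SCD points and using a witness of norm one.
\begin{enumerate}
\item No $x\in\Omega^+\cap S_X$ is an SCD point of $B_X^+$. Such an $x$ is a positive, $\{0,1\}$-valued, norm-one element that is not strongly exposed, so its support is a finite non-maximal antichain. Hence there is a finite chain $A$ such that $S:=A\cup T(\max A)$ misses $\supp(x)$. Running the Daugavet-set construction inside $P_S$ gives, for any slices $(S_n)$ of $B_X^+$, a selection with $\norm{P_S(x_1+\dots+x_n)}=n$. This yields a functional $f$ supported on $S$ with $f(x_n)=1$ for all $n$ but $f(x)=0$.
\item If $x\in S_X^+$ is an SCD point of $C$ with determining slices $(S_n)$, then the non-empty sets $S_n\cap B_X^+$ already determine $x$ in $B_X^+$. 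For the remaining $n$, pick $x_n\in S_n\cap B_X^-$; this is possible because every slice of $C$ meets $B_X^+\cup B_X^-$. Suppose $\norm{x-(\lambda y+(1-\lambda)z)}<\eps$ with $y$ from the positive selections and $z\in B_X^-$. Summing along a branch $\beta$ with $\sum_{t\in\beta}x(t)=1$ gives $1-\lambda<\eps$, hence $\norm{x-y}<3\eps$. Here $\norm{x}=1$ is essential, which is exactly why $0$ cannot serve.
\item Combining these, points of $\Omega^+\cap S_X$ are not SCD points of $C$. Since $C$ is separable and a separable convex set is SCD if and only if all its points are SCD points, $C$ is not SCD.
\end{enumerate}
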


Last, we obtain a few results related to the notion of countable $\pi$-bases (see Section~\ref{section:preliminaries} for the definition). As observed in \cite{AKMMS10}, if a convex bounded non-empty subset $A$ of a Banach space $X$ has a countable $\pi$-base for the relative weak topology, then the set $A$ is SCD. On the other hand, if $A$ is separable and if $(A,w)$ fails to have a countable $\pi$-base, then $A$ must contain a sequence equivalent to the canonical basis of $\ell_1$. It is an open question, asked in \cite{AKMMS10} and \cite{KPW18}, whether every convex bounded set which is SCD also has a countable $\pi$-base for the relative weak topology. To the best of the authors' knowledge, it is also unknown whether or not the unit ball of a space with a 1-unconditional basis has a countable $\pi$-base for its relative weak topology. We will show that the strengthening of the alternative Daugavet property introduced in \cite{LLMRZP25} naturally prevents the existence of a countable $\pi$-base for the relative weak topology of the unit ball of the considered space. In particular, all these spaces have to contain an isomorphic copy of $\ell_1$, which was left as an open question in \cite{LLMRZP25}. 

\begin{theorem}\label{intro_theorem:super_ADP_and_countable_pi-base}
    Let $X$ be a Banach space with the super alternative Daugavet property. Then $(B_X,w)$ fails to have a countable $\pi$-base. In particular, $X$ contains a copy of $\ell_1$. 
\end{theorem}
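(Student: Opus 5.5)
The plan is to argue by contradiction. Throughout I use the definition of the super alternative Daugavet property from \cite{LLMRZP25}: for every $x\in S_X$, every non-empty relatively weakly open subset $V$ of $B_X$ and every $\eps>0$, there are $y\in V$ and $\theta\in\T$ with $\norm{x+\theta y}>2-\eps$.

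Suppose $(V_n)_{n\geq1}$ is a countable $\pi$-base of $(B_X,w)$. Each $V_n$ is non-empty and relatively weakly open, and every non-empty relatively weakly open $W\subset B_X$ contains some $V_n$. The goal is to construct one non-empty relatively weakly open set $W$, defined by finitely many functionals, such that $V_n\not\subset W$ for every $n$.

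For the construction I intend to use the following mechanism. Since the norm is weakly lower semicontinuous, for fixed $x\in S_X$, $\theta\in\T$ and $\eps>0$ the set
\[
\{y\in B_X\colon \norm{x+\theta y}>2-\eps\}
\]
is relatively weakly open. By the super alternative Daugavet property, the union of these sets over $\theta\in\T$ meets every $V_n$. I will pick, by induction on $n$, points $x_n\in S_X$, points $y_n\in V_n$, scalars $\theta_n\in\T$, tolerances $\eps_n\downarrow0$ and norming functionals $f_n\in S_{X^*}$ such that:
\begin{itemize}
\item $\operatorname{Re} f_n(x_n)>1-\eps_n$ and $\operatorname{Re} f_n(\theta_n y_n)>1-\eps_n$;
\item $\norm{x_{n+1}-x_n}<\eps_n$, so that the new point stays almost normed by all earlier functionals.
\end{itemize}
The intended conclusion is that every $V_n$ contains a point $y_n$ which is ``almost antipodal'' to a fixed limit point $x=\lim x_n$. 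Applying the property to $-x$ instead of $x$ gives the mirror version. I then aim to take $W$ to be a small relatively weakly open neighbourhood of a point $z\in B_X$ determined by one functional $x^*$ norming $x$, for instance $W=\{y\in B_X\colon \abs{x^*(y)}<1/2\}$ or a thin slice of the form $\{y\in B_X\colon \operatorname{Re}x^*(y)>1-\delta\}$. Each $V_n$ would then contain the point $y_n$ with $\abs{x^*(y_n)}$ close to $1$ (or $\operatorname{Re}x^*(\theta_n y_n)$ close to $-1$), and this point lies outside $W$.

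\textbf{Main obstacle.} The hard step is synchronising one functional $x^*$ with all the $f_n$. The property yields, for each $n$, a functional $f_n$ almost norming both $x$ and $\theta_n y_n$, but $f_n$ depends on $n$. To turn ``$\norm{x+\theta_n y_n}$ close to $2$'' into ``$x^*(y_n)$ is large'' for a single $x^*$, I would first try a diagonal choice. At step $n$ I would apply the property not to $V_n$ itself but to the smaller set
\[
V_n\cap\{y\colon \abs{f_k(y)}<\eps_n \text{ for } k<n\},
\]
intersected with a neighbourhood where the earlier estimates persist, and take $x^*$ to be a $w^*$-cluster point of the $f_n$. If this fails, the fallback is to replace the single set $W$ by a finite intersection built from the first few $f_k$, chosen so that the set remains non-empty yet excludes a point of every $V_n$.

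For the ``in particular'' statement I use the fact recalled above: a separable set $A$ for which $(A,w)$ has no countable $\pi$-base contains an $\ell_1$-sequence. If $X$ is separable, this applies directly to $A=B_X$. In general, I would use a separable-reduction argument: find a separable subspace $Y\subset X$ which again has the super alternative Daugavet property, via the usual closing-off argument over countable dense sets of points and of relatively weakly open sets. Applying the first part to $Y$ shows that $Y$, and hence $X$, contains $\ell_1$.
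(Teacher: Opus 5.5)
\textbf{The first part has a genuine gap,} exactly at the step you call the main obstacle, and neither of your fixes closes it.

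Your inductive conditions only say that $f_n$ almost norms the (essentially fixed) centre $x$ and the single point $\theta_n y_n$. In other words, they only say that $y_n$ lies in $U_{\eps}:=\{y\in B_X\colon \max_{\theta}\norm{x+\theta y}>2-\eps\}$. This set is relatively weakly open. By the very definition of the super ADP it meets every non-empty relatively weakly open subset of $B_X$, so it is weakly dense in $B_X$. Hence the fact that all $y_n$ lie in it cannot prevent $\{y_n\}$ from being weakly dense. For the same reason, a $w^*$-cluster point $x^*$ of $(f_n)$ tells you nothing about $x^*(y_k)$: that value is a cluster value of $f_m(y_k)$ as $m\to\infty$ with $k$ fixed, and none of your conditions constrains $f_m(y_k)$ for $m\neq k$.

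The diagonal fix goes in the wrong direction. Intersecting $V_n$ with $\{\abs{f_k}<\eps_n,\ k<n\}$, a set that may well be empty inside $V_n$, makes $f_k(y_n)$ small for $k<n$. What you actually need is $f_m(y_n)$ large for all $m\geq n$. The fallback, a finite intersection built from the first few $f_k$, is not an argument.

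\textbf{The missing idea} is to let the centre move with the construction so that norm accumulates. Take $\sum_n\eps_n<\eps$. Apply the super ADP to $V_{n+1}$ at the normalised partial sum $x=\frac{\theta_1y_1+\dots+\theta_ny_n}{\norm{\theta_1y_1+\dots+\theta_ny_n}}$, with tolerance $\eps_{n+1}/\norm{\theta_1y_1+\dots+\theta_ny_n}$. This produces inductively $y_n\in V_n$ and signs $\theta_n$ with $\norm{\theta_1y_1+\dots+\theta_ny_n}>n-\eps$ for every $n$, exactly as in the proof of Theorem~\ref{thm:Daugavet_unit_balls_are_not_SCD}.

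Now any $f_n\in S_{X^*}$ almost norming this sum satisfies $f_n(\theta_iy_i)>1-\eps$ for all $i\leq n$ simultaneously, because each of the $n$ terms is at most $1$. This is the synchronisation you were missing. A $w^*$-cluster point $f$ of $(f_n)$ then satisfies $\abs{f(y_i)}\geq 1-\eps$ for every $i$. So $W:=\{y\in B_X\colon \abs{f(y)}<1-\eps\}$ is a relatively weakly open neighbourhood of $0$ that contains no $V_n$. This is your intended shape for $W$, but the functional only exists thanks to the growing sums.

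These centres move at every step and need not converge. Your requirement $\norm{x_{n+1}-x_n}<\eps_n$ is incompatible with this mechanism and must be dropped.

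\textbf{The ``in particular'' part} matches the paper: pass to a separable subspace with the super ADP, then use that a separable set without a countable $\pi$-base contains an $\ell_1$-sequence. The reduction is not a routine closing-off, because relatively weakly open subsets of $B_Y$ are defined by functionals on $Y$. You should quote the almost isometric ideal result of \cite{LLMRZP25} for it, as the paper does.
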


Using similar ideas, we will also exhibit a non-convex subset $\Sigma$ of $B_{X_T}$ which is SCD, but has no countable $\pi$-base for the relative weak topology. On the other hand, we will show that $B_{X_T}=\cconv(\Sigma)$ has a countable $\pi$-base for its relative weak topology. We show analogous results for the modified binary tree space and the countably branching tree space $X_{T_\infty}$, obtaining the following. 

\begin{theorem}\label{intro_theorem:tree_spaces_unit_balls_have_countable_pi-bases}
    Let $X=X_T,X_\mathfrak{M}$ or $X_{T_\infty}$. Then $(B_X,w)$ has a countable $\pi$-base. 
\end{theorem}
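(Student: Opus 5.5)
We need a countable family $\mathcal{U}$ of non-empty relatively weakly open subsets of $B_X$ such that every non-empty relatively weakly open subset of $B_X$ contains a member of $\mathcal{U}$. For $X=X_T$ the norm is
\begin{equation*}
\norm{x}=\sup\Bigl\{\sum_{t\in S}\abs{x_t}\colon S \text{ a segment (subset of a branch) of } T\Bigr\},
\end{equation*}
and $X_\mathfrak{M}$ and $X_{T_\infty}$ are defined by analogous formulas. The basis $(e_t)$ is 1-unconditional, and the coordinate functionals $e_t^*$ separate points. Finitely supported vectors with rational coordinates are norm dense in $X$. The plan is to exploit an ``$\ell_1$ along branches, $c_0$-like across branches'' phenomenon. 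We add to a finitely supported vector $x$ a tail $y$ that is spread out over many incomparable nodes. Such a $y$ can be weakly small and disjoint from $\supp x$ while $\norm{x+y}$ is controlled by $\max(\norm{x},\norm{y})$ up to the branch structure.

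\textbf{Candidate $\pi$-base.} For finitely supported $x$ with rational coordinates and $\norm{x}<1$, a finite set $F\subset T$ containing $\supp x$, and rational $\eps>0$, define
\begin{equation*}
U(x,F,\eps)=\{z\in B_X\colon \abs{e_t^*(z)-x_t}<\eps \text{ for all } t\in F\}.
\end{equation*}
This family is countable and its members are non-empty, since each contains $x$. The main claim is that every non-empty relatively weakly open subset $W$ of $B_X$ contains some $U(x,F,\eps)$.

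\textbf{Proof of the claim.} First, we may shrink $W$ to a basic set $W=\{z\in B_X\colon \abs{f_i(z-z_0)}<\delta,\ i\le n\}$ with $z_0$ in the interior of the ball, chosen finitely supported with rational coordinates, so that $\norm{z_0}<1$. By 1-unconditionality, the tails of $f_i$ outside a large finite set $F$ matter only through their action on vectors supported off $F$.

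The key step is to show the following. If $z\in B_X$ agrees with $x=z_0$ on $F$ up to $\eps$, then $z\in W$. This requires $\abs{f_i(z-x)}$ to be small for every such $z$. That cannot hold for arbitrary $z$, because the part $P_{F^c}z$ can be large. So the simple cylinder sets are not enough, and $U$ has to be refined.

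\textbf{Refinement.} Choose $x$ with $\norm{x}=1$ exactly, in fact norming along some branch through $F$, so that membership in the ball forces the tail off $F$ to be small along branches passing through the support of $x$. The correct candidates are then sets $U$ determined by finitely many coordinates together with a finite family of nodes $s_1,\dots,s_m$ (rationals) whose cones below carry the remaining mass. In each such cone the tail must behave like an element of a copy of $X_T$ of small norm. The step I expect to be the main obstacle is the convergence statement: the functionals $f_i$ restricted to deep cones must have small norm on the relevant slices. I would try to prove it via a compactness and diagonal argument on the tree. Because the branch space is compact and metrizable, $f_i$ restricted to the cone below level $N$ converges along all but finitely many large subtrees, in the sense that $\sup_{\norm{y}\le1,\ \supp y\subset \text{cone}(s)} \abs{f_i(y)}$ is attained near finitely many branches. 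We then force the tail of $z$ to avoid those branches by including enough coordinates in $F$.

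For $X_\mathfrak{M}$ and $X_{T_\infty}$ the same scheme applies, with $T_\infty$ handled via finite truncations of the branching.
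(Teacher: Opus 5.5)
\textbf{There is a genuine gap.} You never prove the central claim, that every non-empty relatively weakly open $W\subseteq B_X$ contains a member of a fixed countable family. The step you flag as the main obstacle is left open, and the mechanism you propose for it cannot work.

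Cylinder constraints never control $z$ off $F$. Only $\norm{z}\le 1$ does, and only along branches on which the centre is already almost saturated. So ``including enough coordinates in $F$'' along an exceptional branch $\beta_i$ forces nothing unless the centre already has $\ell_1$-mass close to $1$ on $\beta_i$. Norming along one branch saturates only that branch, not every branch through $\supp x$.

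Smallness of $f_i$ on individual deep cones is also not what is needed, even in your hedged form (off finitely many branches, which is in fact true). The functional $g(x)=\sum_{t\in T}2^{-\abs{t}}x(t)$ has norm one and $\norm{P^*_{T(s)}g}=2^{-\abs{s}}$ for every $s$. Yet $g\big(\sum_{t\in L_n}e_t\big)=1$ for all $n$, so the cone contributions add up as soon as you fill a whole level. Note also that your first family was discarded for the wrong reason: what fails is the choice of centre $x=z_0$, not the family.

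\textbf{The missing idea is to move the centre.} You must show that every such $W$ contains a finitely supported $p$ with $\sum_{t\in\beta}\abs{p(t)}=1$ for every branch $\beta$; these are exactly the points of continuity (Proposition~\ref{prop:points_of_continuity_characterization}). If $\supp p\subseteq T_n$, any $z\in B_X$ whose coordinates on $\supp p$ are $\delta$-close to those of $p$ satisfies $\norm{P_{\perp_n}z}\le(n+1)\delta$. Hence cylinders around rational approximations of $(1-\eta)p$ lie in $B(p,r)\cap B_X\subseteq W$, and your original family $U(x,F,\eps)$ is then a $\pi$-base.

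Producing $p$ inside $W$ is the real content of the paper's proof:
\begin{itemize}
\item Lemma~\ref{lem:functionals_have_small_values_on_a_big_part_of_the_tree} makes $\abs{f_j(e_s)}$ small off $T_N\cup\beta_1\cup\dots\cup\beta_l$.
\item Lemma~\ref{lem:Jaani_lemma} chooses signs so that the level-$n_2$ filler, with coefficients $\pm1$ or $\pm\frac12$, has small $f_j$-values.
\item On the exceptional branches, where $f_j(e_{\beta_i(m)})$ need not tend to $0$, one uses $\frac12(e_{\beta_i(n_2)}-e_{\beta_i(n_1)})$ with $n_1<n_2$ chosen by compactness. Together with the filler, this saturates every branch.
\item The shifts $S_t$ transport this from $0$ to an arbitrary finitely supported point.
\end{itemize}
Sign cancellation is indispensable and absent from your sketch. $B_X^+$ is convex and not SCD (Theorem~\ref{thm:binary_tree_space_is_not_SCD}), hence has no countable $\pi$-base. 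So any argument that would go through unchanged in the positive cone must fail.

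Finally, ``the same scheme applies'' does not hold as stated for the other two spaces. In $X_\mathfrak{M}$ the $\pm e_t$ filler leaves the unit ball because of the $\lambda$-segments. The paper replaces $e_t$ by $y_t=\sum_{s\in T^*}2^{-\abs{s}}\omega_{t,s}e_{t\smallfrown s}$, with a second use of the sign lemma. $B_{X_{T_\infty}}$ has no points of continuity at all, so no centre yields small cylinders. There one needs the norm estimate $\norm{P^*_{T_\infty\setminus T_0}f}<\eps$ for some finitely branching subtree $T_0$ (Lemma~\ref{lemma:functinals_on_X_infty_are_essentially_supported_on_a_finitely_branching_tree}). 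It is obtained from the $\ell_1$-decomposition of the dual over incomparable cones, and then only $P_{T_0}z$ is controlled.
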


Let us end this section with a quick description of the structure of the paper. In Section~\ref{section:preliminaries}, we introduce all the necessary definitions, and provide a short proof of the fact that the unit ball of a space with the Daugavet property fails to be SCD. Then we show how a similar scheme leads to Theorem~\ref{intro_theorem:super_ADP_and_countable_pi-base}. Our main results are presented in Section~\ref{section:binary_tree_space}, where we conduct a detailed study of the geometric properties of the unit ball of the binary tree space. In Subsection~\ref{subsec:binary_tree_dentability}, we show that $B_{X_T}$ is dentable, and more precisely, that it is the closed convex hull of its strongly exposed points. As a consequence, we obtain that $X_T$ has the alternative Daugavet property, hence numerical index~1. In Subsection~\ref{subsec:binary_tree_not_SCD}, we prove Theorem~\ref{intro_theorem:the_binary_tree_space_is_not_SCD} by showing that the positive unit ball of the space $X_T$ behaves geometrically like a ``Daugavet set'', and exhibit a non-convex SCD subset of $B_{X_T}$ which has no countable $\pi$-base for the relative weak topology. In Subsection~\ref{subsec:binary_tree_renorming}, we recall the definition of SCD points, and study these in $B_{X_T}^+$ and its symmetrization $C:=\cconv(B_{X_T}^+\cup B_{X_T}^-)$ to show that the latter set also fails to be SCD. In particular, we obtain a 2-unconditional equivalent renorming of $X_T$ whose unit ball is not SCD, yielding Theorem~\ref{intro_theorem:a_Banach_space_with_2-unconditional_basis_whose_unit_ball_is_not_SCD}. Finally, in Subsection~\ref{subsec:weak_density_of_points_of_continuity}, we prove that the points of continuity of $B_{X_T}$ and of $B_{X_\mathfrak{M}}$ are weakly dense, and infer that the relative weak topologies of these two sets admit a countable $\pi$-base. We extend the latter result to the countably branching tree space in Section~\ref{section:countably_branching_tree_space}. The unit ball of this space has no points of continuity, but we obtain nonetheless the existence of a $\pi$-base by using a reduction argument which shows that every functional on $X_{T_\infty}$ is essentially supported on a finitely branching subtree. We end the paper with some open questions and a few additional miscellaneous results related to the topic in Section~\ref{section:open_questions}.

\section{Preliminaries}\label{section:preliminaries}

We will use standard Banach space notation following \cite{FHHMZ11}. For simplicity, we will consider real Banach spaces only. Let $X$ be a Banach space and $A$ a bounded non-empty subset of $X$. We denote respectively by $\conv(A)$ and $\cconv(A)$ the convex hull of $A$ and its closure, by $\spn(A)$ and $[A]$ the linear span of $A$ and its closure. A \emph{slice} of $A$ is a non-empty intersection of $A$ with an open half-space, or in other words a subset of $A$ of the form \begin{equation*}
    S(A,f,\delta):=\{x\in A\colon f(x)>\sup f(A)-\delta\},
\end{equation*} where $f\in X^*$ and $\delta>0$. We say that $x\in A$ is
\begin{enumerate}
    \item an \emph{extreme point} of $A$ (writing $x\in \ext(A)$) if $x$ does not belong to the interior of any segment of $A$;
    \item a \emph{point of continuity} of $A$ (writing $x\in \PC(A)$) if the identity mapping $\Id\colon (A,w)\to(A,\norm{\cdot})$ is continuous at $x$ (that is, if $x$ is contained in relatively weakly open subsets of $A$ of arbitrarily small diameter);
    \item a \emph{denting point} of $A$ (writing $x\in\dent(A)$) if $x$ is contained in slices of $A$ of arbitrarily small diameter;
    \item a \emph{strongly exposed point} of $A$ (writing $x\in \strexp(A)$) if there exists $f\in X^*$ such that for all sequences $(x_n)\subseteq A$,  $f(x_n)\rightarrow f(x)$ if and only if $x_n\rightarrow x$ in norm.
\end{enumerate}

 We say that a subset $G$ of $A$ (respectively a family $(x_i)_{i\in I}$ of elements of $A$) is \emph{generating} for $A$ if $A\subseteq\cconv(G)$ (respectively $A\subseteq \cconv\{x_i\colon i\in I\}$). By the Hahn--Banach theorem, $G$ is generating for $A$ if and only if every slice of $A$ contains an element of $G$. In particular, a subset $G$ of $B_X$ is generating for $B_X$ if and only if it is norming for $X^*$. We say that a collection $\{A_i\}_{i\in I}$ of non-empty subsets of $A$ is \emph{determining} for $A$ if every $(x_i)_{i\in I}\in \prod_{i\in I}A_i$ is generating for $A$. Equivalently, the collection $\{A_i\}_{i\in I}$ is determining for $A$ if and only if every slice of $A$ contains one of the sets $A_i$. The set $A$ is said to be \emph{slicely countably determined} (SCD set for short) if it admits a countable determining collection of slices.

In \cite[Remark~2.7]{AKMMS10} \cite[Proposition~7.20]{KMMP} it was observed that a set $A$ is SCD if and only if $\cconv(A)$ is SCD. Furthermore, it follows from Bourgain lemma (see \cite[Lemma~II.1]{GGMS}) that if $A$ is assumed to be convex, then SCDness is equivalent to the existence of a countable determining collection of relatively weakly open subsets. Prototypical examples of SCD sets are separable dentable sets, i.e. sets $A$ for which $\dent(A)$ is generating, and separable sets not containing a sequence equivalent to the canonical basis of $\ell_1$. This notion was first introduced in \cite{AKMMS10} for convex sets, and was later extended to non-convex sets in \cite{KPW18}. A separable Banach space $X$ is said to be \emph{slicely countably determined} (SCD space for short) if every bounded non-empty subset of $X$ is SCD. Considering the examples of SCD sets mentioned before, we have that separable Banach spaces with the RNP (spaces where every bounded convex and closed subset is dentable) and separable spaces not containing an isomorphic copy $\ell_1$ (in particular, spaces with a separable dual) are SCD. A thorough study of SCD sets and spaces is presented e.g. in \cite[Chapter~10]{KMRZW25}.

Concerning the Daugavet property and the alternative Daugavet property, let us recall the following well known geometric characterizations via slices of the unit ball. 
\begin{proposition}[\cite{KSSW00}, \cite{MT04}]\label{prop:DP_and_ADP_slice_characterization}
    Let $X$ be a Banach space. Then
    \begin{itemize}
        \item $X$ has the Daugavet property if and only if $\underset{y\in S}{\sup}\;||x+y||=2$ for every $x\in S_X$ and every slice $S$ of $B_X$;
        \item $X$ has the alternative Daugavet property if and only if $\underset{y\in S}{\sup}\;\underset{\theta\in \mathbb{T}}{\max}\;||x+\theta y||=2$ for every $x\in S_X$ and every slice $S$ of $B_X$.
    \end{itemize}
\end{proposition}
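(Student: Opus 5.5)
The plan is to translate between rank one operators and slices via the correspondence $T=x^*\otimes x$, meaning $Ty=x^*(y)x$, together with the slice $S(B_X,x^*,\varepsilon)$. I treat the Daugavet property first and then explain the changes for the alternative version.

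\emph{Reduction to norm-one operators.} Every rank one operator is of the form $T=x^*\otimes x$ and, after rescaling, $\norm{T}=t\norm{x^*}\norm{x}$ with $\norm{x^*}=\norm{x}=1$ and $t\geq0$. The function $f(t)=\norm{\Id+t\,x^*\otimes x}$ is convex and satisfies $f(0)=1$ and $f(t)\leq 1+t$. If $f(1)=2$, then:
\begin{itemize}
\item for $t\geq1$, convexity on $[0,t]$ gives $f(t)\geq 1+t$;
\item for $0\leq t<1$, convexity on $[t,1/t]$ together with the bound $f(1/t)\leq 1+1/t$ (or directly the three-point inequality) gives $f(t)\geq 1+t$.
\end{itemize}
Hence it suffices to show that $\norm{\Id+x^*\otimes x}=2$ for all $x\in S_X$ and $x^*\in S_{X^*}$. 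For the alternative property, the same argument applies to $t\mapsto\max_{\theta}\norm{\Id+t\theta T}$, which is a maximum of convex functions and hence convex.

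\emph{Slice condition implies the Daugavet property.} Fix $x\in S_X$, $x^*\in S_{X^*}$ and $\varepsilon>0$, and choose $y\in S(B_X,x^*,\varepsilon)$ with $\norm{x+y}>2-\varepsilon$. Then
\begin{equation*}
\norm{(\Id+x^*\otimes x)y}=\norm{y+x^*(y)x}\geq\norm{x+y}-(1-x^*(y))>2-2\varepsilon .
\end{equation*}
Since $\varepsilon$ is arbitrary, $\norm{\Id+x^*\otimes x}=2$.

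\emph{Daugavet property implies the slice condition.} Take $x\in S_X$ and a slice $S(B_X,x^*,\varepsilon)$, where we may assume $\norm{x^*}=1$. Since $\norm{\Id+x^*\otimes x}=2$, there is $y\in B_X$ with $\norm{y+x^*(y)x}>2-\varepsilon$. Because $\norm{y+x^*(y)x}\leq 1+\abs{x^*(y)}$, this forces $\abs{x^*(y)}>1-\varepsilon$. Replacing $y$ by $-y$ if needed, which does not change this norm, we get $x^*(y)>1-\varepsilon$, so $y\in S(B_X,x^*,\varepsilon)$. Then
\begin{equation*}
\norm{x+y}\geq\norm{y+x^*(y)x}-(1-x^*(y))>2-2\varepsilon .
\end{equation*}

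\emph{Alternative Daugavet property.} The argument is identical with $\theta$ inserted. For one direction, choose $y\in S$ and $\theta\in\T$ with $\norm{x+\theta y}$ close to $2$, and estimate $\norm{(\Id+\theta T)y}$. For the other, choose $\theta$ and $y$ with $\norm{y+\theta x^*(y)x}$ close to $2$, and normalize the sign of $y$ so that $x^*(y)>1-\varepsilon$.

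\emph{Main obstacle.} The only step needing care is the reduction from arbitrary norms $\norm{T}$ to the norm-one case. I would handle it with the convexity argument above: the map $t\mapsto\norm{\Id+tT}-1-t$ is nonpositive, vanishes at $0$ and at $1$ by hypothesis, and then convexity forces it to vanish for all $t\geq0$.
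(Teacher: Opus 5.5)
Your proof is correct. The paper does not prove this proposition and simply quotes it from \cite{KSSW00} and \cite{MT04}; your argument is essentially the standard proof given there: you pair the rank one operator $x^*\otimes x$ with the slice $S(B_X,x^*,\eps)$, use $\norm{y+x^*(y)x}\le 1+\abs{x^*(y)}$ to push $y$ into the slice after a sign change, and reduce to $\norm{T}=1$ by convexity of $t\mapsto\norm{\Id+tT}$. Two small points to tidy: for a fixed slice $S(B_X,x^*,\eps_0)$ you must run the ``only if'' argument for every $\eps\le\eps_0$ to get the supremum equal to $2$, and the replacement $y\mapsto -y$ relies on real scalars, which the paper assumes (in the complex case one rotates $y$ by a unimodular scalar and works with $\mathrm{Re}\,x^*$).
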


It is known that spaces with the Daugavet property are never SCD (see \cite[Example~2.13]{AKMMS10}). Using the characterization above, we provide a short sketch proof below for the sake of completeness.

\begin{theorem}\label{thm:Daugavet_unit_balls_are_not_SCD}
    Let $X$ be a Banach space with the Daugavet property. Then $B_X$ fails to be SCD. 
\end{theorem}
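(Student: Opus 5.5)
Suppose, towards a contradiction, that $\{S_n\}_{n\in\N}$ is a countable determining collection of slices of $B_X$. The plan is to build inductively one point $x_n\in S_n$ for each $n$ so that the sequence $(x_n)$ is \emph{not} generating for $B_X$. That contradicts the definition of a determining collection.

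The basic tool is the slice characterization of the Daugavet property in Proposition~\ref{prop:DP_and_ADP_slice_characterization}, in its standard ``localized'' form. For every $x\in S_X$, every slice $S$ of $B_X$ and every $\eps>0$ there is a slice $S'\subseteq S$ such that $\norm{x+y}>2-\eps$ for all $y\in S'$. To get this, pick $g\in S_{X^*}$ with $g(x)>1-\eps/2$. Then use the dual Daugavet characterization (the weak$^*$-slice version, equivalent to the one in Proposition~\ref{prop:DP_and_ADP_slice_characterization}) to find $h$ in the weak$^*$-slice of $B_{X^*}$ determined by $x$ and $\eps/2$ with $h$ almost attaining its supremum on $S$. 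Intersecting $S$ with a slice defined by $h$ then gives $S'$.

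The main step is the construction. We fix $x_0\in S_X$, for instance an element of $S_1$ of norm close to $1$, normalized, and an $\eps>0$. The aim is to choose $x_n\in S_n$ so that every $y\in\conv\{x_n\}$ satisfies $\norm{x_0+y}>2-\eps$. If this holds for all $y\in\cconv\{x_n\}$ as well, then $-x_0\notin\cconv\{x_n\}$, since $\norm{x_0-x_0}=0$. So $(x_n)$ does not generate $B_X$, which is the desired contradiction.

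The obstacle is that controlling each $x_n$ individually does not control convex combinations. To handle this I use a functional-based bookkeeping with $\eps_n=\eps 2^{-n}$. We inductively choose norm-one functionals $f_n$ together with the points $x_n$, arranged so that every new functional almost norms the previously constructed vector. Concretely:
\begin{itemize}
\item Set $f_1$ to be a functional almost norming $x_0$.
\item At step $n$, pick $f_{n+1}\in S_{X^*}$ with $f_{n+1}(x_0)>1-\eps_n$ and $f_{n+1}(x_i)>1-\eps_n$ for all $i\le n$. This is possible by applying the localized lemma iteratively through the weak$^*$-slice version of the Daugavet property to $x_0$ and the slice containing $x_1,\dots,x_n$.
\item Then pick $x_{n+1}\in S_{n+1}$ with $f_{n+1}(x_{n+1})>1-\eps_n$, using the Daugavet property in the weak$^*$ form so that the weak$^*$-slice of the dual ball determined by $x_0$ meets the slice $S_{n+1}$ suitably.
\end{itemize}

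A more robust alternative avoids functionals on convex combinations. We directly pass to nested relatively weakly open sets, namely the sets $\{y\in B_X:\norm{x_0+y}>2-\eps\}$ intersected with slices. This is cleanest via Bourgain's lemma, quoted before the statement: it reduces to convex combinations of slices. Then we choose $x_n\in S_n\cap W$, where $W=\{y\in B_X\colon \norm{x_0+y}>2-\eps\}$.

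The key claim is that $\cconv(W)$ is not all of $B_X$. I would try to prove this by choosing a single $f\in S_{X^*}$ with $f(x_0)\approx 1$ and showing that $W$ can be shrunk to a set where $f>1-\eps$. Then $\cconv(\{x_n\})\subseteq\{f\ge 1-\eps\}$, which misses $-x_0$. Making this single-functional choice compatible with each $S_n$ is exactly where the Daugavet property must be used in its strong form: every slice of $B_X$ meets every weak$^*$-slice $\{f: f(x_0)>1-\eps\}$ of the dual in the sense of almost-norming. I expect the main difficulty to be in the uniformity over $n$ of this step.
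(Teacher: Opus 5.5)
\textbf{There is a genuine gap.} The step that carries the whole proof is never achieved: producing, for each $n$, one norm-one functional that is almost $1$ on all of $x_1,\dots,x_n$ simultaneously. You flag this yourself at the end, and both of your routes break at exactly this point.

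In the first route you ask for $f_{n+1}$ with $f_{n+1}(x_i)>1-\eps_n$ for all $0\le i\le n$. Whether such a functional exists depends only on the points already chosen: it essentially means that $\norm{x_0+\dots+x_n}$ is close to $n+1$. Nothing in your construction secures this. Applying the Daugavet property, or your localized lemma, to the fixed point $x_0$ only gives the pairwise information $\norm{x_0+x_n}\approx 2$, which, as you note, does not control convex combinations.

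With $\eps_n=\eps 2^{-n}$ the requirement is moreover self-defeating. Put $d_n:=(n+1)-\norm{x_0+\dots+x_n}$.
\begin{itemize}
\item Since $\norm{x_{n+1}}\le 1$, we have $d_{n+1}\ge d_n\ge 0$.
\item Your condition forces $d_n<(n+1)\eps 2^{-n}\to 0$.
\end{itemize}
Together these give $d_n=0$ for all $n$. That is exact attainment of the Daugavet supremum at every step, which the Daugavet property does not provide in general.

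The second route rests on a false claim. By Proposition~\ref{prop:DP_and_ADP_slice_characterization}, the set $W=\{y\in B_X\colon\norm{x_0+y}>2-\eps\}$ meets every slice of $B_X$, so $\cconv(W)=B_X$. Likewise, no functional $f$ fixed in advance can be compatible with all the $S_n$. If $\{S_n\}$ is determining, the slice $\{y\in B_X\colon f(y)<-\tfrac12\}$ contains some $S_n$, and that $S_n$ misses $\{f>1-\eps\}$.

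\textbf{The missing idea, which is how the paper argues,} is to build the functionals from the sequence of slices by controlling partial sums.
\begin{itemize}
\item Fix $\eps_i>0$ with $\sum_i\eps_i<\eps$.
\item At step $n+1$, apply the Daugavet property not to a fixed $x_0$ but to the normalized partial sum $(x_1+\dots+x_n)/\norm{x_1+\dots+x_n}$ and the slice $S_{n+1}$. This gives $x_{n+1}\in S_{n+1}$ with $\norm{x_1+\dots+x_{n+1}}>n+1-\sum_{i\le n+1}\eps_i>n+1-\eps$.
\item Take any $f_n\in S_{X^*}$ with $f_n(x_1+\dots+x_n)>n-\eps$. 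Since $f_n(x_j)\le 1$ for every $j$, it automatically satisfies $f_n(x_i)>1-\eps$ for all $i\le n$. The tolerance is a fixed $\eps$, not a shrinking $\eps_n$.
\end{itemize}
Hence $\norm{y}\ge f_n(y)>1-\eps$ for every $y\in\conv\{x_1,\dots,x_n\}$. The paper instead takes a weak$^*$-cluster point $f$ of $(f_n)$ and gets $f(x_n)\ge 1-\eps$ for all $n$. Either way $0\notin\cconv\{x_n\colon n\in\N\}$, so $\{S_n\}$ is not determining.

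Your target $-x_0\notin\cconv\{x_n\}$ would also follow if you include $x_0$ as the first term of the partial sums. However, neither $x_0$ nor the localized-slice lemma is needed.
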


\begin{proof}
Since $X$ has the Daugavet property, then given any countable collection $\{S_n\}_{n\in \N}$ of slices of $B_X$ and $\eps\in(0,1)$, we can find $(x_n)\in \prod_{n\in\N} S_n$ such that for every $n\in\N$, $\norm{x_1+\dots+x_n}>n-\eps$.
   
More precisely, let $(\eps_n)$ be a sequence of positive real numbers such that $\sum_{n\in\N}\eps_n<\eps$. Then we can construct inductively the sequence $(x_n)\in\prod_{n\in\N} S_n$ in such a way that for every $n\in\N$, \begin{equation*}
\norm{x_1+\dots+x_n}>n-\sum_{i=1}^n\eps_i.
\end{equation*} Indeed, we can first pick an arbitrary $x_1\in S_1$, and assuming that $x_1,\dots,x_n$ have been constructed in such a way, then we can just apply the Daugavet property to the point $x=\frac{x_1+\dots+x_n}{\norm{x_1+\dots+x_n}}$, the slice $S_{n+1}$ and $\xi=\frac{\eps}{\norm{x_1+\dots+x_n}}$ to get the point $x_{n+1}$ we were looking for. 

Now this means that for every $n\in \N$, we can find $f_n\in S_{X^*}$ such that for every $i\in\{1,\dots,n\}$, we have $f_n(x_i)>1-\eps$. So if $f\in B_{X^*}$ is a cluster point of the sequence $(f_n)$ for the weak$^*$ topology, then we have that every $x_n$ belongs to the (closed and convex) set $\{x\in B_X\colon f(x)\geq 1-\eps\}$. In particular, $0\notin \cconv\{x_n\colon n\in\N\}$, and $\{S_n\}_{n\in \N}$ is not determining for $B_X$. 
\end{proof}

\begin{remark}\label{rem:Daugavet_sets_are_not_SCD}
Let us note that in the above proof, we would get the exact same conclusion if we were to replace the unit ball of the space $X$ by some non-empty closed convex subset $A$ which contains $0$ and satisfies the following condition: for every $\eps>0$, $a\in A\cap S_{X}$, and $S$ slice of $A$, there exists $b\in S$ such that $\norm{a+b}\geq 2-\eps$. Surprisingly, we will show in the next section that such ``Daugavet sets'' may be found in the unit ball of Banach spaces with a 1-unconditional basis.
\end{remark}

Let $(X,\tau)$ be a topological space. Recall that a collection $\{V_i\}_{i\in I}\subseteq\tau$ of non-empty open subsets of $X$ is a called a \emph{$\pi$-base} for $(X,\tau)$ if every non-empty open subset of $X$ contains one of the open sets $V_i$. Equivalently, the collection $\{V_i\}_{i\in I}$ is a $\pi$-base for $(X,\tau)$ if and only if every $(x_i)_{i\in I}\in\prod_{i\in I} V_i$ is $\tau$-dense in $X$, i.e. is such that $X=\overline{\{x_i\colon i\in I\}}^\tau$.

In particular, let $X$ be a Banach space and $A$ be a bounded non-empty subset of $X$. If $(A,w)$ has a countable $\pi$-base, then $A$ admits a countable determining collection of non-empty relatively weakly open subsets.  As previously mentioned, if $A$ is additionally assumed to be convex, then the latter property implies that $A$ is SCD. On the other hand, let us point out that the example from \cite[Proposition~10.7.6]{KMRZW25} actually also shows that for non-convex sets, the existence of a countable $\pi$-base does not always implies SCDness. Prototypical examples of sets which admit a countable $\pi$-base for the relative weak topology are separable sets in which points of continuity are weakly dense, and separable sets not containing a sequence equivalent to the canonical basis of $\ell_1$. The latter is a consequence of some deep topological studies from Todorčević (see the proof of \cite[Theorem~2.22]{AKMMS10} for more details). To the best of the authors' knowledge, this is actually the only known way of proving that a separable set which fails to be SCD has to contain such a sequence. Let us also point out that it was proved in \cite[Section~6.2]{AKMMS10} that every non-empty bounded convex subset of $c_0(\ell_1)$ or $\ell_1(c_0)$ has a countable $\pi$-base for the relative weak topology. 

From the above we know that the unit ball of a Banach space with the Daugavet property fails to have a countable $\pi$-base for the relative weak topology. It turns out that we can extend this result to all spaces with the super alternative Daugavet property, introduced in \cite{LLMRZP25}. Recall that an infinite-dimensional Banach space $X$ has the \emph{super alternative Daugavet property} (super ADP for short) if for every $x\in S_X$ and every non-empty relatively weakly open subset $W$ of $B_X$ we have
\[
\underset{y\in S}{\sup}\;\underset{\theta\in \mathbb{T}}{\max}\;||x+\theta y||=2.
\]
In \cite{LLMRZP25} it is proved that the super ADP lies strictly between the Daugavet property and the alternative Daugavet property. The following theorem shows, in particular, that a Banach space with the super ADP contains an isomorphic copy of $\ell_1$, solving an open question from \cite{LLMRZP25}. Let us point out that it is still currently open whether or not the unit ball of such a space can be SCD. 
\begin{theorem}\label{thm:super_ADP_unit_balls_do_not_have_a_weak_pi_base}
    Let $X$ be a Banach space with the super ADP. Then $(B_X,w)$ does not have a countable $\pi$-base. In particular, $X$ contains an isomorphic copy of $\ell_1$.
\end{theorem}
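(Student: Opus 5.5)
We argue by contradiction, following the scheme of the proof of Theorem~\ref{thm:Daugavet_unit_balls_are_not_SCD}, with relatively weakly open sets in place of slices and the super ADP in place of the Daugavet property. Suppose $\{V_n\}_{n\in\N}$ is a countable $\pi$-base for $(B_X,w)$. We may assume each $V_n$ is a non-empty relatively weakly open subset of $B_X$. We will build $(x_n)\in\prod_n V_n$ whose set $\{x_n\colon n\in\N\}$ is not weakly dense in $B_X$; this contradicts the equivalent formulation of $\pi$-bases recalled above.

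The construction is inductive, using a sequence of signs absorbed into the points. Fix $\eps\in(0,1)$ and $(\eps_n)$ positive with $\sum\eps_n<\eps$. The key remark is that $W\mapsto -W$ preserves relatively weakly open subsets of $B_X$, and since the scalars are real, $\mathbb{T}=\{\pm1\}$. Pick $x_1\in V_1$ and set $\theta_1=1$. Having $\theta_1x_1,\dots,\theta_nx_n$ with $\norm{\sum_{i\le n}\theta_ix_i}>n-\sum_{i\le n}\eps_i$, apply the super ADP to $x=\frac{\sum_{i\le n}\theta_ix_i}{\norm{\sum_{i\le n}\theta_ix_i}}$ and $W=V_{n+1}$. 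This gives $y\in V_{n+1}$ and $\theta\in\{\pm1\}$ with $\norm{x+\theta y}>2-\eps_{n+1}/n$. Set $x_{n+1}=y$ and $\theta_{n+1}=\theta$. The standard triangle-inequality estimate (as in the Daugavet proof) then gives $\norm{\sum_{i\le n+1}\theta_ix_i}>n+1-\sum_{i\le n+1}\eps_i$; this routine step needs to be checked with the correct scaling of the tolerance.

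Choosing $f_n\in S_{X^*}$ norming $\sum_{i\le n}\theta_ix_i$ gives $f_n(\theta_ix_i)>1-\eps$ for all $i\le n$. Let $f$ be a weak$^*$ cluster point of $(f_n)$. Then $\theta_if(x_i)\ge1-\eps$ for every $i$, so $\abs{f(x_i)}\ge 1-\eps$ for all $i$. Hence every $x_i$ lies in the weakly closed set $\{z\in B_X\colon\abs{f(z)}\ge1-\eps\}$, which does not contain $0$ (in fact $\norm{f}\ge1-\eps>0$). The weakly open set $\{z\in B_X\colon\abs{f(z)}<1-\eps\}$ is therefore non-empty and contains no $x_i$, so $\{x_n\}$ is not weakly dense. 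This contradiction shows $(B_X,w)$ has no countable $\pi$-base. The main point to be careful about is that, unlike the SCD case, we cannot pass to closed convex hulls; we only need failure of weak density, and the absolute value handles the unknown signs. This is exactly why the argument yields no $\pi$-base rather than non-SCDness.

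For the last claim, the spaces involved are not assumed separable, so we argue as follows. If $X$ contained no copy of $\ell_1$, neither would any separable subspace. The obstruction above is local, so we would like a separable subspace inheriting the relevant property. Via a separable-determination argument (a standard closing-off construction), we find a separable subspace $Y$ with the super ADP, or at least one on which the construction above still applies, namely with $B_Y$ satisfying the diametral condition for countably many relevant weak neighbourhoods. By the result quoted from \cite{AKMMS10} (Todorčević), a separable bounded set without $\ell_1$-sequences has a countable weak $\pi$-base, which gives a contradiction. Alternatively, the construction applied inside any separable $B_Y$ already produces an $\ell_1$-sequence. I expect this separable-reduction step to be the main technical obstacle, and I would first try to redo the induction with $V_n$ a $\pi$-base of $B_Y$ for a suitably closed-off separable $Y$.
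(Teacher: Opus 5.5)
The first assertion is handled correctly and exactly as in the paper: you choose $x_n\in V_n$ and signs inductively via the super ADP, take a weak$^*$ cluster point $f$ of norming functionals, and note that the non-empty weakly open set $\{z\in B_X\colon \abs{f(z)}<1-\eps\}$ misses every $x_n$. One small fix is needed at the start of the induction. Take $x_1\in V_1\cap S_X$, which exists because $X$ is infinite-dimensional. Then $a:=\norm{\sum_{i\le n}\theta_ix_i}\geq 1$ at every step, so $\norm{\sum_{i\le n}\theta_ix_i+\theta y}\geq a\norm{x+\theta y}-(a-1)>a+1-\eps_{n+1}$. With an arbitrary $x_1\in V_1$ the bookkeeping at $n=1$ does not close.

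The genuine gap is the ``in particular'' part, which is the step you leave open. For non-separable $X$ the first assertion is automatic and says nothing about $\ell_1$: a countable $\pi$-base gives a countable weakly dense subset of $B_X$, hence $X$ is separable. So you need a separable subspace $Y$ that itself has the super ADP, and then you apply the first part to $B_Y$ together with \cite[Theorem~10.1.21]{KMRZW25}.

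This is not a routine closing-off argument, for three reasons.
\begin{itemize}
\item The relatively weakly open subsets of $B_Y$ are defined by functionals in $Y^*$, which is non-separable precisely in the case of interest.
\item The countable families you must defeat (the candidate $\pi$-bases of $B_Y$) are only known once $Y$ is fixed, so there is no countable list of neighbourhoods to close off against.
\item Even for a single neighbourhood $V=W\cap Y$, with $W$ built from Hahn--Banach extensions of the defining functionals, the super ADP of $X$ gives a witness in $W$ that need not lie in $Y$.
\end{itemize}

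The paper fills this by importing from \cite{LLMRZP25} that $X$ contains a separable almost isometric ideal $Y$ with the super ADP. The ai-ideal structure provides a Hahn--Banach extension operator $\phi\colon Y^*\to X^*$. For finite-dimensional $E\subseteq X$ and $F\subseteq Y^*$, it also provides almost isometries $T\colon E\to Y$ fixing $E\cap Y$ with $\phi(g)(e)=g(Te)$ for $g\in F$, $e\in E$. This is what lets you replace a witness $w\in W$ by $Tw\in Y$, which after a harmless rescaling lies in $V$, while keeping $\norm{x+\theta Tw}$ close to $2$.

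Your fallback claim that the construction ``already produces an $\ell_1$-sequence'' also fails. It would again need the super ADP inside $Y$. In any case it only yields one functional $f$ with $\abs{f(x_i)}\geq 1-\eps$ for all $i$, which is far from a lower $\ell_1$-estimate for all coefficients: a constant sequence $x_i=x\in S_X$ satisfies $\norm{\sum_{i\le n}x_i}=n$.
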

\begin{proof}
As above, it is easy to prove that since $X$ has the super ADP, then given any countable collection $\{V_n\}_{n\in\N}$ of non-empty relatively weakly open subsets of $B_X$ and $\eps\in(0,1)$, we can find a sequence $(v_n)\in\prod_{n\in\N}V_n$ and a sequence $(\theta_n)\subseteq\{-1,1\}$ such that for every $n\in\N$, $\norm{\theta_1 v_1+\dots+\theta_n v_n}> n-\eps$. In particular, we can find $f_n\in S_{X^*}$ such that for every $i\in\{1,\dots,n\}$, $f_n(\theta_iv_i)> 1-\eps$.  So if $f\in B_{X^*}$ is a cluster point of the sequence $(f_n)$ for the weak$^*$ topology, then we have that every $v_n$ belongs to the (weakly closed) set $\{x\in B_X\colon \abs{f(x)}\geq 1-\eps\}$. In particular, $0\notin\overline{\{v_n\colon n\in \N\}}^w$, and $\{V_n\}_{n\in\N}$ is not a $\pi$-base for $(B_X,w)$. 
    
For the second part of the statement, recall that it was proved in \cite{LLMRZP25} that there is a separable subspace $Y$ of $X$ (in fact an almost isometric ideal) which also has the super ADP. By the above, $(B_Y,w)$ fails to have a countable $\pi$-base, and it follows that $Y$ has to contain a copy of $\ell_1$ (see \cite[Theorem~10.1.21]{KMRZW25}).  
\end{proof}

\section{The binary tree space}\label{section:binary_tree_space}

Recall that a \emph{tree} is a preordered set $T:=(T,\preceq)$ with the property that all the sets of the form $\{s\preceq t\colon s\in T\}$ with $t\in T$ are well ordered. We say that an element $t\in T$ is a \emph{successor} of $s\in T$ if $s\prec t$ (that is, $s\preceq t$ and $s\neq t$), and we say that $t$ is an \emph{immediate successor} of $s$ if $s\prec t$ and if there is no $r\in T$ such that $s\prec r \prec t$. A \emph{chain} of $T$ is a well ordered non-empty subset of $T$, and an \emph{antichain} of $T$ is a non-empty subset of $T$ composed of pairwise incomparable elements for $\preceq$. We also call \emph{branch} of $T$ any maximal chain in $T$. 

In this section, $T:=\{\emptyset\}\cup\bigcup_{n\in\N}\{0,1\}^n$ will be the infinite binary tree with its natural preorder $\preceq$. For every $t\in T$, we denote by $\abs{t}$ the length of the sequence $t$ (with the convention that $\abs{\emptyset}=0$) and by $t_{\lvert k}$ the sequence obtained by taking the first $k$ elements of $t$ ($k\leq \abs{t}$). Recall that $s\preceq t$ if and only if $s=t_{\lvert k}$ for some $k\leq\abs{t}$. Given $s,t\in T$, we denote by $s\smallfrown t$ the concatenation of the sequences $s$ and $t$, i.e. the sequence of length $\abs{s\smallfrown t}=\abs{s}+\abs{t}$ obtained by taking first the $\abs{s}$ elements of $s$ and then the $\abs{t}$ elements of $t$. We usually write $n:=(n)$ the length-one elements of $T$ and $s\smallfrown n:=s\smallfrown (n)$. 

Let $\A$ be the collection of all chains in $T$. We consider the real Banach space $X:=X_T$ obtained by taking the completion of the space $c_{00}(T)$ of all finitely supported functions on $T$ equipped with the norm given by \begin{equation*}
\norm{x}:=\sup_{A\in \A}\sum_{t\in A}\abs{x(t)}
\end{equation*} for every $x\in c_{00}(T)$. Observe that the collection $\A$ forms an adequate family of subsets of $T$ (see Section~\ref{section:open_questions} for more details). In particular, for every $x\in X$, we can find a maximal subset $A$ of $\A$ (that is a branch of $T$) such that $\norm{x}=\sum_{s\in A}\abs{x_s}$. Furthermore, if $e_t$ denotes the function on $T$ taking value 1 at $t$ and $0$ elsewhere, then the family $(e_t)_{t\in T}$ is a 1-unconditional basis for $X$. Also note that for every branch $\beta$ of $T$, we have that the sequence $(e_t)_{t\in\beta}$ is isometrically equivalent to the canonical basis of $\ell_1$, and that for every infinite antichain $\alpha$ of $T$, we have that the sequence $(e_t)_{t\in\alpha}$ is isometrically equivalent to the canonical basis of $c_0$. 

Throughout the text, we will use the following notation. Given any subset $A$ of $X$, we denote by $A^+$ the intersection of $A$ with the positive cone $X^+$ of $X$, and by $A^-$ the intersection of $A$ with the negative cone $X^-$ of $X$. For every $n\in\N$, we set \begin{equation*}
    L_n:=\{s\in T\colon \abs{s}=n\}, \quad T_n:=\{s\in T\colon \abs{s}\leq n\}, \quad \text{ and } \quad \perp_n:=T\setminus T_n=\{s\in T\colon \abs{s}>n\}.
\end{equation*} Also, for every $s\in T$, we set \begin{equation*}
    T(s):=\{t\in T\colon t\succeq s\}.
\end{equation*} We denote by $\A_f$ the collection of all finite chains in $T$. Given $A\in\A_f$ and $S\subseteq T$, we write \begin{equation*}
    A\prec S
\end{equation*} if $s\succ \max_{\preceq}(A)$ for every $s\in S$, or in other words if $S\subseteq T(\max_{\preceq}(A))\setminus \{\max_{\preceq}(A)\}$. 

Given a non-empty subset $S$ of $T$, we denote by $P_S$ the projection on the subspace $[e_s]_{s\in S}$. The following lemma is straightforward, and we will use it repeatedly without mention in the rest of the text. 

\begin{lemma}\label{lem:totally_incomparable_decomposition}
    Let $\Gamma$ be a non-empty set, $\{S_\gamma\}_{\gamma\in\Gamma}$ be a family of pairwise totally incomparable subsets of $T$ for $\preceq$, and $S:=\bigcup_{\gamma\in\Gamma}S_\gamma$. Then the set $\Gamma$ is countable, and we have the $M$-decomposition \begin{equation*}
        [e_s]_{s\in S}=\bigg(\sum_{\gamma\in\Gamma}[e_s]_{s\in S_\gamma}\bigg)_{c_0}.
    \end{equation*}
\end{lemma}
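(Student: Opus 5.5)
We need to prove the lemma: pairwise totally incomparable subsets (every element of $S_\gamma$ is incomparable with every element of $S_{\gamma'}$ for $\gamma\neq\gamma'$, and we take the $S_\gamma$ non-empty) give a countable index set and an $M$-decomposition (a $c_0$-sum) of the closed span.

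The plan has three steps: countability, the norm identity on finitely supported vectors, and extension by density.

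\emph{Countability.} Since $T$ is countable and the sets $S_\gamma$ are non-empty and pairwise disjoint, the map $\Gamma\to T$ sending $\gamma$ to any chosen element of $S_\gamma$ is injective. Disjointness holds because an element lying in two of the sets would be comparable with itself. Hence $\Gamma$ is countable.

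\emph{The norm identity.} I will show that for every $x\in c_{00}(T)$ supported in $S$,
\begin{equation*}
\norm{x}=\max_{\gamma\in\Gamma}\norm{P_{S_\gamma}x}.
\end{equation*}
The inequality $\geq$ holds because each $P_{S_\gamma}$ is a restriction to a subset, so it has norm at most $1$ by 1-unconditionality. For $\leq$, take any chain $A\in\A$. Two elements of $A$ are comparable, so $A$ cannot meet two distinct sets $S_\gamma$ and $S_{\gamma'}$. Hence $A\cap S\subseteq S_\gamma$ for a single $\gamma$, and therefore
\begin{equation*}
\sum_{t\in A}\abs{x(t)}=\sum_{t\in A\cap S_\gamma}\abs{x(t)}\leq\norm{P_{S_\gamma}x}.
\end{equation*}
Taking the supremum over all chains $A$ gives $\leq$.

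\emph{Extension to closed spans.} Both sides of the identity are continuous in $x$, so it extends to all of $[e_s]_{s\in S}$. Since $x$ is a limit of finitely supported vectors, the sequence $\gamma\mapsto\norm{P_{S_\gamma}x}$ is a uniform limit of finitely supported sequences and hence tends to $0$. This shows that the map
\begin{equation*}
x\mapsto(P_{S_\gamma}x)_{\gamma\in\Gamma}
\end{equation*}
is an isometry into the space $\big(\sum_{\gamma\in\Gamma}[e_s]_{s\in S_\gamma}\big)_{c_0}$.

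The remaining point is surjectivity. Any element of the $c_0$-sum with only finitely many non-zero coordinates is the image of the sum of those coordinates. Such elements are dense in the $c_0$-sum, the map is an isometry, and $[e_s]_{s\in S}$ is complete, so the image is closed and dense and therefore everything. This identifies $[e_s]_{s\in S}$ with the $c_0$-sum.

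No step is a real obstacle. The only point needing care is the chain observation, which is exactly where the \emph{total} incomparability hypothesis is used.
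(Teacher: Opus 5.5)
Your proof is correct. The paper gives no argument (it calls the lemma straightforward), and your verification is exactly the routine check it leaves to the reader: a chain meets at most one $S_\gamma$, so $\norm{x}=\max_{\gamma}\norm{P_{S_\gamma}x}$ on finitely supported vectors, and then density and completeness identify $[e_s]_{s\in S}$ with the $c_0$-sum. You were also right to make the non-emptiness of the $S_\gamma$ explicit, since without it the countability of $\Gamma$ would fail.
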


\subsection{The unit ball of the binary tree space is dentable}\label{subsec:binary_tree_dentability} Since the space $X$ has a 1-unconditional basis, then it follows from \cite[Theorem~3.1]{KMMW2013} that $B_X$ is an SCD set. In this subsection, we will prove that the strongly exposed points of $B_X$ are generating, which implies that $B_X$ is even dentable. As an application, we will see that the space $X$ also has the alternative Daugavet property, hence numerical index 1. 

\begin{proposition}\label{prop:stongly_exposed_points_characterization}
    Let $x\in S_X$. Then $x$ is a strongly exposed point of $B_X$ if and only if there exists a finite maximal antichain $\alpha$ of $T$ such that $\abs{x(s)}=1$ for every $s\in\alpha$. In particular, $\ext(B_X)=\strexp(B_X)$.
\end{proposition}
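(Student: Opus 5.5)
We prove the two implications separately and then show that every extreme point has the stated form.

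\emph{Sufficiency.} Let $\alpha$ be a finite maximal antichain with $\abs{x(s)}=1$ for $s\in\alpha$. Every branch meets $\alpha$ in exactly one point, and $\norm{x}=1$, so $x$ vanishes on every $t$ comparable to some $s\in\alpha$ with $t\neq s$. Since $\alpha$ is finite and maximal, every $t\in T$ is comparable to some element of $\alpha$. Hence $x=\sum_{s\in\alpha}\sign(x(s))e_s$. We take the functional
\begin{equation*}
f:=\frac{1}{\abs{\alpha}}\sum_{s\in\alpha}\sign(x(s))e_s^*.
\end{equation*}
For $y\in B_X$ and $s\in\alpha$, the chain through $s$ gives $\abs{y(s)}\leq 1$, so $\norm{f}\leq 1$ and $f(x)=1$.

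Now let $(y_n)\subseteq B_X$ with $f(y_n)\to 1$. Each term in the average is at most $1$, so $\sign(x(s))y_n(s)\to 1$ for every $s\in\alpha$. Take $t\in T$ and a branch $\beta$ through $t$; it meets $\alpha$ at a unique $s$. Then
\begin{equation*}
\sum_{r\in\beta,\,r\neq s}\abs{y_n(r)}\leq 1-\abs{y_n(s)}\to 0.
\end{equation*}
Because $\alpha$ is finite, finitely many branches (one through each extension of each $s\in\alpha$ together with the initial segments) cover $T$. This needs care: the part of $T$ above $s\in\alpha$ is infinite. We therefore bound $\norm{y_n-x}$ directly. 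By Lemma~\ref{lem:totally_incomparable_decomposition}, the coordinates strictly above the different $s\in\alpha$ form a $c_0$-sum. Together with the finite set below $\alpha$, this gives that $\norm{y_n-x}$ is at most a finite number of terms of the form
\begin{equation*}
\sup_{\beta\ni s}\ \sum_{r\in\beta}\abs{y_n(r)-x(r)}\leq \abs{y_n(s)-x(s)}+\bigl(1-\abs{y_n(s)}\bigr).
\end{equation*}
Each term tends to $0$, so $y_n\to x$ and $x$ is strongly exposed.

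\emph{Necessity.} Since $\strexp\subseteq\ext$, it suffices to show that every $x\in\ext(B_X)$ has the stated form. Let $\alpha$ be the set of minimal elements of $\supp x$; it is an antichain. We argue in three steps.

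\textbf{(a) The values on $\alpha$ have modulus $1$.} Suppose some $s\in\supp x$ has $\abs{x(s)}<1$. We perturb $x(s)$ by $\pm\delta$. If every branch through $s$ had norm sum $1$, then adding $\delta$ would break the norm, so perturbations that keep $x$ in $B_X$ must be handled via a compensating change elsewhere. The cleaner route is this: if some $s\in\supp x$ has $\abs{x(s)}<1$, then there is a comparable $t$ in the support, either below or above $s$. We write $x=\frac12(x+\delta(e_s-\sign\cdot e_t))+\frac12(\ldots)$, transferring mass along the chain, which preserves every chain sum. The delicate case is when $t$ is above $s$ and other branches pass through $s$ but not through $t$; to handle it we transfer mass from $s$ to all immediate successors simultaneously, using a decomposition of $x$ restricted to $T(s)$.

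\textbf{(b) $\alpha$ is maximal.} The same argument shows every element of $\supp x$ is in $\alpha$ with value $\pm1$. If $\alpha$ were not maximal, some node $r$ would be incomparable to all of $\alpha$. In that case $x\pm\delta e_r\in B_X$ for small $\delta$, contradicting extremality.

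\textbf{(c) $\alpha$ is finite.} Since $x\in X$ is a norm limit of finitely supported vectors and the values on $\alpha$ are $\pm1$, $\alpha$ must be finite.

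The main obstacle is step (a): showing that an extreme point has no fractional coordinates, through a mass-transfer decomposition along subtrees.
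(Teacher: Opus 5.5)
\textbf{Verdict: there is a genuine gap in the necessity direction.} Your overall plan matches the paper's. For sufficiency you use the same exposing functional $\frac{1}{\abs{\alpha}}\sum_{s\in\alpha}\sign(x(s))e_s^*$. For necessity you use the inclusion $\strexp(B_X)\subseteq\ext(B_X)$ together with a description of $\ext(B_X)$, which the paper simply asserts as clear. Your sufficiency argument is correct. The detour through the $c_0$-decomposition is unnecessary: each branch meets the finite maximal antichain $\alpha$ in exactly one point $s$, and that gives your bound on $\norm{y_n-x}$ directly.

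The gap is step (a), on which (b) and hence the whole necessity direction rest. It describes an intention rather than giving an argument, and the mechanisms you name do not work.
\begin{itemize}
\item Shifting mass between two comparable support points $s,t$ changes the sum on every branch that passes through only one of them.
\item Shifting mass from $s$ to its immediate successors fails when those successors carry no mass. For instance, $x=\frac12(e_\emptyset+e_{00}+e_{01}+e_1)\in S_X$ is not extreme, yet $x+\delta(e_\emptyset-e_0-e_1)$ has sum $1+2\delta$ along the branch through $00$.
\item In the same example, for $s=00$ there is no mass above $s$ at all. The compensating mass sits at $\emptyset$, which is shared with branches avoiding $s$, and you do not treat this case.
\end{itemize}
You also never show that every branch sum of an extreme point equals $1$, which any sum-preserving transfer needs.

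Here is one way to close the gap.
\begin{itemize}
\item By $1$-unconditionality you may assume $x\geq 0$.
\item \emph{All branch sums equal $1$.} If $\sum_{t\in\beta}x(t)<1-\eps$ for some branch $\beta$, choose $r\in\beta$ with $\norm{P_{\perp_{\abs{r}}}x}<\eps/2$. Then $x\pm\frac{\eps}{2}e_r\in B_X$, so $x$ is not extreme.
\item \emph{Finite cuts.} For $u\in[0,1)$ let $\gamma_u:=\{t\in T\colon \sum_{r\prec t}x(r)\leq u<\sum_{r\preceq t}x(r)\}$. This is an antichain meeting every branch, and $x>0$ on $\gamma_u$. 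Moreover $\gamma_u\subseteq T_n$ whenever $\norm{P_{\perp_n}x}<1-u$. So each $\gamma_u$ is a finite maximal antichain.
\item \emph{No fractional values.} If $0<x(s)<1$, then $s\in\gamma_u$ exactly for $u$ in an interval of length $x(s)<1$. Hence there are $u_1,u_2\in[0,1)$ with $s\in\gamma_{u_1}\setminus\gamma_{u_2}$. Take $0<\delta\leq\min\{x(t)\colon t\in\gamma_{u_1}\cup\gamma_{u_2}\}$. Both vectors $x\pm\delta(\mathbf{1}_{\gamma_{u_1}}-\mathbf{1}_{\gamma_{u_2}})$ are nonnegative with all branch sums equal to $1$, so they lie in $B_X$. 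Since $x$ is their midpoint, $x$ is not extreme.
\end{itemize}
This is the correct form of your mass transfer: it goes between two finite cuts, not between a node and one comparable node or its immediate successors. After this, $x$ takes values in $\{0,1\}$, and your steps (b) and (c) finish the proof.
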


 \begin{proof}
 Clearly, a point $x\in S_X$ is an extreme point of $B_X$ if and only if the above condition is satisfied. Furthermore, assuming that this condition is satisfied, then we have that every branch of $T$ intersects the support of $x$, and therefore the norm one functional \begin{equation*}
        f = \frac{1}{\abs{\alpha}}\sum_{s\in \alpha}\sign(x(s))e^*_s
\end{equation*} is strongly exposing at $x$. 
\end{proof}

Let $\Pi$ be the set of all strongly exposed points of $B_X$, and for every $n\in\N$, let \begin{equation*}
    X_n:=\spn\{e_t\}_{t\in T_n} \quad\text{and}\quad  \Pi_n:=\Pi\cap X_n.
\end{equation*} 

\begin{lemma}
   The set $\Pi_n$ is generating for $B_{X_n}$.
\end{lemma}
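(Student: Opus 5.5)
The plan is to identify the extreme points of the finite-dimensional ball $B_{X_n}$. By Minkowski's theorem (finite-dimensional Krein--Milman), $B_{X_n}=\conv(\ext(B_{X_n}))$, so it suffices to show $\ext(B_{X_n})\subseteq\Pi_n$. By Proposition~\ref{prop:stongly_exposed_points_characterization}, this amounts to showing the following: for every $x\in\ext(B_{X_n})$ there is a finite maximal antichain $\alpha$ of $T$, with $\alpha\subseteq T_n$, such that $\abs{x(s)}=1$ for all $s\in\alpha$.

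The key structural observation is a recursive decomposition of the norm. For $x\in X_n$ with $n\geq1$, every branch passes through $\emptyset$ and then enters exactly one of $T(0)$ or $T(1)$. Hence
\begin{equation*}
\norm{x}=\abs{x(\emptyset)}+\max\bigl(\norm{P_{T(0)}x},\norm{P_{T(1)}x}\bigr).
\end{equation*}
Moreover, $[e_s]_{s\in T(i)\cap T_n}$ is isometric to $X_{n-1}$ via the shift $t\mapsto i\smallfrown t$. The max part is the $M$-decomposition of Lemma~\ref{lem:totally_incomparable_decomposition}. Thus
\begin{equation*}
X_n\cong\mathbb{R}\oplus_1\bigl(X_{n-1}\oplus_\infty X_{n-1}\bigr)
\end{equation*}
isometrically.

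We then use the standard descriptions of extreme points of such sums. The extreme points of the unit ball of an $\ell_1$-sum $E\oplus_1F$ are the points $(u,0)$ with $u\in\ext(B_E)$ and $(0,v)$ with $v\in\ext(B_F)$. The extreme points of the unit ball of an $\ell_\infty$-sum $E\oplus_\infty F$ are the pairs $(u,v)$ with $u\in\ext(B_E)$ and $v\in\ext(B_F)$. Consequently, each $x\in\ext(B_{X_n})$ falls into one of two cases.
\begin{itemize}
\item Either $x=\pm e_\emptyset$, in which case we take $\alpha=\{\emptyset\}$.
\item Or $x(\emptyset)=0$ and both $P_{T(0)}x$ and $P_{T(1)}x$ are shifts of extreme points of $B_{X_{n-1}}$.
\end{itemize}

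We argue by induction on $n$, with base case $n=0$ where $B_{X_0}=[-e_\emptyset,e_\emptyset]$. The inductive hypothesis gives finite antichains $\alpha_0\subseteq T(0)\cap T_n$ and $\alpha_1\subseteq T(1)\cap T_n$. On each $\alpha_i$ the coordinates of $x$ have modulus $1$, and $\alpha_i$ is maximal in the shifted copy of $T$ rooted at $i$. We set $\alpha=\alpha_0\cup\alpha_1$. It is an antichain, since elements of $T(0)$ and $T(1)$ are incomparable. It is maximal in $T$: the root $\emptyset$ is below everything, and any $t\neq\emptyset$ lies in some $T(i)$, where it is comparable to an element of $\alpha_i$.

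The only point needing a little care is the bookkeeping of ``maximal antichain'' between $T_{n-1}$, its shifted copies, and the whole tree $T$. Maximality in the truncated tree must pass to maximality in $T$. I will handle this by phrasing the induction hypothesis directly in terms of maximal antichains of $T$ contained in $T_{n-1}$. Then maximality transfers along the shift $t\mapsto i\smallfrown t$ from $T$ onto $T(i)$, and the union argument above applies verbatim.
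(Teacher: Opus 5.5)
Your proposal is correct and follows the same route as the paper: Minkowski's theorem in the finite-dimensional space $X_n$, plus the fact that every extreme point of $B_{X_n}$ lies in $\Pi_n$ via Proposition~\ref{prop:stongly_exposed_points_characterization}. The paper just asserts $\Pi_n=\ext(B_{X_n})$ as straightforward. Your decomposition $X_n\cong\mathbb{R}\oplus_1(X_{n-1}\oplus_\infty X_{n-1})$ with induction fills in those details correctly, and the inclusion $\ext(B_{X_n})\subseteq\Pi_n$ you prove is all that is needed.
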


\begin{proof}
    Again, it is straightforward to check that $\Pi_n=\ext(B_{X_n})$. Since $\dim(X_n)<\infty$, the conclusion immediately follows from the Minkowski theorem. 
\end{proof}

\begin{corollary}\label{cor:strongly_exposed_points_are_generating_for_B_X}
  The set $\Pi$ is generating for $B_X$. In particular, $B_X$ is dentable, hence SCD.
\end{corollary}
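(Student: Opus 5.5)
The plan is to deduce the corollary from the previous lemma by a density argument. Since $c_{00}(T)$ is dense in $X$, and every finitely supported vector lives in some $X_n$, the union $\bigcup_{n}X_n$ is dense in $X$. The only thing to check is that $\bigcup_n B_{X_n}$ is dense in $B_X$.

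Let $x\in B_X$ and $\eps>0$. First choose $y\in c_{00}(T)$ with $\norm{x-y}<\eps$. Then $\norm{y}<1+\eps$, and $z:=y/\max(1,\norm{y})$ lies in $B_{X_n}$ for $n$ large enough and satisfies $\norm{x-z}<2\eps$. (Alternatively, use the 1-unconditional basis directly: the projections $P_{T_n}x$ have norm at most $\norm{x}$ and converge to $x$.) By the previous lemma, $B_{X_n}\subseteq\cconv(\Pi_n)$.

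The one point needing care is that $\Pi_n\subseteq\Pi$, which holds by definition, since $\Pi_n=\Pi\cap X_n$. This matters because we need strongly exposed points of $B_X$, not merely of $B_{X_n}$. Indeed, an extreme point of $B_{X_n}$ has modulus-one values on a finite maximal antichain of $T_n$, and such an antichain is also maximal in $T$, so Proposition~\ref{prop:stongly_exposed_points_characterization} applies. Hence $B_X\subseteq\overline{\bigcup_n\cconv(\Pi_n)}\subseteq\cconv(\Pi)$, so $\Pi$ is generating for $B_X$.

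For the second part, strongly exposed points are denting points, so $\dent(B_X)\supseteq\Pi$ is generating, i.e.\ $B_X$ is dentable. Since $X$ is separable, the preliminaries (separable dentable sets are SCD) give that $B_X$ is SCD. Concretely, for each of countably many points $x_k$ of a countable dense subset of $\Pi$ (or of the countable set $\bigcup_n\Pi_n$ itself, if the antichain values are restricted to $\pm1$), take slices $S_{k,m}$ containing $x_k$ of diameter less than $1/m$. Every slice of $B_X$ contains some $x_k$, being an open half-space meeting a generating set, and hence contains some $S_{k,m}$.

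There is no real obstacle here. The only mildly delicate step is observing that extreme points of the finite-dimensional balls are genuinely strongly exposed in the whole space, and this is handled by the characterization in Proposition~\ref{prop:stongly_exposed_points_characterization}.
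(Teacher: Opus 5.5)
Your proposal is correct and follows essentially the same route as the paper: you use density of $\bigcup_n B_{X_n}$ in $B_X$ together with the lemma that $\Pi_n=\Pi\cap X_n$ generates $B_{X_n}$, and then $\Pi\subseteq\dent(B_X)$ gives dentability and hence SCD. The extra details you supply are correct but only make explicit what the paper leaves implicit; these are the maximality in $T$ of maximal antichains of $T_n$, and the explicit countable family of small slices (your hedge there is unnecessary, since strongly exposed points are exactly sign vectors on finite maximal antichains, so $\Pi=\bigcup_n\Pi_n$ is itself countable).
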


\begin{proof}
    This immediately follows from the previous result and the fact that $X=\overline{\bigcup_{n\in\N}X_n}$. The in particular part holds because $\Pi\subseteq \dent(B_X)$.
\end{proof}

With the latter result at hand, we can now easily show that the space $X$ enjoys the alternative Daugavet property, hence numerical index 1.

\begin{theorem}
    The space $X$ has the alternative Daugavet property. In particular, it has numerical index 1.
\end{theorem}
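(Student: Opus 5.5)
We will use the slice characterization of the alternative Daugavet property from Proposition~\ref{prop:DP_and_ADP_slice_characterization}. Fix $x\in S_X$, a slice $S=S(B_X,f,\delta)$ of $B_X$, and $\eps>0$. We will find $y\in S$ and $\theta\in\{-1,1\}$ with $\norm{x+\theta y}>2-\eps$. The guiding idea is that the strongly exposed points are extreme, so they carry modulus-one coordinates on a finite maximal antichain. Every branch meets such an antichain, and we want to line up a branch that nearly norms $x$ with a branch through a large coordinate of some $y\in S$.

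\textbf{Step 1: reduce $x$ to finite support.} By density of $c_{00}(T)$ we may assume $x$ is finitely supported, say $x\in X_n$, at the cost of an $\eps/2$ error. Pick a branch $\beta$ with $\sum_{s\in\beta}\abs{x(s)}=1$. Since $x\in X_n$, the sum only involves $s\in\beta\cap T_n$.

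\textbf{Step 2: choose the slice point.} By Corollary~\ref{cor:strongly_exposed_points_are_generating_for_B_X}, the slice $S$ contains a point $y\in\Pi$. By Proposition~\ref{prop:stongly_exposed_points_characterization} there is a finite maximal antichain $\alpha$ with $\abs{y(s)}=1$ for every $s\in\alpha$. Since $y$ has norm one, every branch meeting $\alpha$ is normed by that single coordinate, so $y(s)=0$ at all other points of such a branch.

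The branch $\beta$ meets $\alpha$ at a unique $t$. If $t\notin\operatorname{supp}(x)$, we take $\theta=1$ and get
\[\norm{x+y}\geq \sum_{s\in\beta}\abs{x(s)+y(s)}=\sum_{s\in\beta}\abs{x(s)}+\abs{y(t)}=2.\]
If instead $t\in\operatorname{supp}(x)$, we choose $\theta=\sign(x(t))\sign(y(t))$. Then the coordinates of $x$ and $\theta y$ agree in sign at $t$ and $\theta y$ vanishes elsewhere on $\beta$. This gives $\norm{x+\theta y}\geq\sum_{s\in\beta}\abs{x(s)}+1=2$.

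So in both cases we obtain the value $2$ up to the approximation error of Step 1. We conclude that $X$ has the alternative Daugavet property.

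\textbf{Step 3: numerical index~1.} The introduction recalls that, when $B_X$ is SCD, the alternative Daugavet property is equivalent to numerical index~1 (and to lushness). Since $B_X$ is SCD by Corollary~\ref{cor:strongly_exposed_points_are_generating_for_B_X}, numerical index~1 follows.

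The main point needing care is the claim in Step 2 that $y$ vanishes on the rest of any branch through $\alpha$. This is what makes the choice of a single sign $\theta$ suffice. It follows directly from $\norm{y}=1$ and the definition of the norm, and no genuine obstacle is expected.
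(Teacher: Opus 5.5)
Your proof is correct and follows the paper's argument: take a strongly exposed point $y$ in the slice, intersect a branch $\beta$ norming $x$ with the finite maximal antichain $\alpha$ of $y$ at the unique point $t$, align the signs at $t$, and then deduce numerical index~1 from lushness, which holds because $B_X$ is SCD. The differences are cosmetic. Your reduction to finitely supported $x$ is unnecessary, since the norm of every $x\in X$ is attained on a branch. On the other hand, your separate treatment of the case $x(t)=0$, where the paper's formula $\theta=\sign(x(t))/\sign(y(t))$ would give $0$, and your explicit check that $y$ vanishes on $\beta\setminus\{t\}$ make precise two points the paper leaves implicit.
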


\begin{proof}
    We will make use of Proposition \ref{prop:DP_and_ADP_slice_characterization}. Let $x\in S_X$ and $S$ be a slice of $B_X$. By the above result, $S$ contains a strongly exposed point $y$ of $B_X$. By Proposition~\ref{prop:stongly_exposed_points_characterization}, we can find a finite maximal antichain $\alpha$ such that $\abs{y(s)}=1$ for every $s\in\alpha$. Let $\beta$ be a branch of $T$ such that $\norm{x}=\sum_{s\in\beta}\abs{x(s)}$. Since $\alpha$ is maximal, there exists (a unique) $t\in T$ which belongs to both $\alpha$ and $\beta$. Let $\theta:=\frac{\sign(x(t))}{\sign(y(t))}$. Then $\abs{x(t)+\theta y(t)}=1+\abs{x(t)}$, and therefore \begin{equation*}
        \norm{x+\theta y}\geq \sum_{s\in\beta}\abs{x(s)+\theta y(s)}= 1+\sum_{s\in\beta}\abs{x(s)}=2.
    \end{equation*}The conclusion follows. 

For the second part of the statement, recall that it was proved in \cite[Theorem~4.4]{AKMMS10} that every Banach space with the alternative Daugavet property whose unit ball is an SCD set satisfies a stronger geometric condition called lushness, and that this property is known to imply numerical index 1. 
\end{proof}

\subsection{The binary tree space is not an SCD space}\label{subsec:binary_tree_not_SCD} In the last subsection, we saw that the unit ball of $X$ is an SCD set. In turns out however that its positive unit ball is not, which means that $X$ is not an SCD space. 

\begin{theorem}\label{thm:binary_tree_space_is_not_SCD}
The positive unit ball $B_X^+$ of $X$ is not an SCD set. In particular, $X$ is a Banach space which fails to be SCD, but contains no isomorphic copy of a Banach space with the Daugavet property. 
\end{theorem}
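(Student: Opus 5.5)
Following Remark~\ref{rem:Daugavet_sets_are_not_SCD}, I will show that $A:=B_X^+$ (closed, convex, containing $0$) is a ``Daugavet set'': for every $\eps>0$, every $a\in A\cap S_X$ and every slice $S=S(A,f,\delta)$ of $A$, there is $b\in S$ with $\norm{a+b}\geq 2-\eps$. The proof of Theorem~\ref{thm:Daugavet_unit_balls_are_not_SCD} then runs verbatim. Given slices $S_n$ of $A$, we inductively pick $x_n\in S_n$ with $\norm{x_1+\dots+x_n}>n-\eps$. Taking $f_n$ norming the partial sums and a weak$^*$ cluster point $f$, we get $0\notin\cconv\{x_n\}$. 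So no countable collection of slices is determining. The ``in particular'' part then follows because $X$ has a 1-unconditional basis and Daugavet spaces do not embed into spaces with an unconditional basis.

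For the key step, fix $a\in A\cap S_X$, $f$, $\delta$ and $\eps$. First reduce by density. Take $a$ finitely supported in $T_N$, with a branch $\beta$ satisfying $\sum_{s\in\beta}a(s)>1-\eps$. Take $f$ finitely supported as well, say in $T_N$ after enlarging $N$; this is legitimate since truncating $f$ changes $f$ only slightly on $A$, so the slice shrinks only slightly. Pick $y\in S$ finitely supported in $T_N$ too. Let $t:=\beta_{\lvert N}\in L_N$, the node of $\beta$ at level $N$.

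The idea is to modify $y$ only beyond level $N$. Writing $y$ restricted to the chain through $t$, set $r:=1-\sum_{s\preceq t}y(s)\geq 0$. Define $b:=y+r\,e_{t'}$ with $t'\succ t$ at level $N+1$. Since $f$ vanishes off $T_N$, we get $f(b)=f(y)$, so $b\in S$ provided $\norm{b}\leq 1$. Is it? Every branch through $t'$ passes through $t$, and its $b$-sum is $\sum_{s\preceq t}y(s)+r=1$; other branches are unchanged. Hence $b\in A$. Now evaluate along a branch $\gamma\supseteq\{s\preceq t'\}$:
\[
\sum_{s\in\gamma}(a+b)(s)\geq \sum_{s\preceq t}a(s)+\sum_{s\preceq t}y(s)+r=\sum_{s\preceq t}a(s)+1>2-\eps .
\]
Positivity is used crucially here: there is no cancellation, and $a$ beyond level $N$ is essentially negligible, since $a$ has already been taken in $T_N$.

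The subtle point is making the reductions uniform. Replacing $f$ by its truncation $P_{T_N}^*f$ gives
\[
\sup_A P^*_{T_N}f\geq \sup_A f-\eta \quad\text{and}\quad |f(b)-P^*_{T_N}f(b)|\leq \|f-P^*_{T_N}f\|\,\|b\|.
\]
So I choose $y$ deep in a smaller slice, $S(A,f,\delta/2)$, with $\norm{f-P_{T_N}^*f}<\delta/4$. This needs $\norm{f-P^*_{T_N}f}\to0$, which may fail, since $X^*$ is not spanned by coordinates. I expect this to be the main obstacle.

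The remedy is to avoid truncating $f$ altogether. Instead, choose $y\in S$ finitely supported and put the extra mass $r$ on a \emph{long antichain} of successors of $t$. Spread it as $b=y+r\,\frac1m\sum_{i}$ along $m$ disjoint branches below $t$? No: a $c_0$-averaging over an antichain $\{u_1,\dots,u_m\}\succ t$ works better. Set $b_i=y+re_{u_i}$. The vectors $e_{u_i}$ form a $c_0$-basis, hence are weakly null, so $f(e_{u_i})\to0$ and $f(b_i)\to f(y)>\sup f(A)-\delta$. For $i$ large, $b_i\in S$, with no truncation of $f$ needed. Only the finite support of $a$ and $y$ is required, which is harmless by density in $A$ and openness of $S$.
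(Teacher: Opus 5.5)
Your final argument is correct and is essentially the paper's proof. Like the paper, you show that $B_X^+$ is a ``Daugavet set'' in the sense of Remark~\ref{rem:Daugavet_sets_are_not_SCD}: you take a finitely supported $y$ in the slice and add the missing mass $1-\sum_{s\preceq t}y(s)$ at the nodes of an infinite antichain beyond the supports, which is weakly null since it spans $c_0$, so the perturbed points eventually lie in the slice while $\norm{a+b_i}$ is estimated along the branch through $t$. The truncation detour is unnecessary; it genuinely fails, e.g.\ for $f=\sum_{t\in\beta}e_t^*$ with $\beta$ a branch, as you suspected.
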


\begin{proof}
We will prove that for every $x\in S_X^+$ and every slice $S$ of $B_X^+$, we can find an element $y\in S$ such that $\norm{x+y}=2$. In particular, $B_X^+$ is a ``Daugavet set'' in the sense of Remark~\ref{rem:Daugavet_sets_are_not_SCD}, and thus it is not an SCD set. So take $x$ and $S$ in this way, and fix a finitely supported element $y\in S$. Wlog, $x$ is also finitely supported. Let $A\in\A_f$ be such that $\sum_{t\in A}x(t)=1$. Additionally, let $B\in \A_f$ be such that $B\succ A$ and $y(s)=0$ for every $s\succ \max_\preceq(B)$. Then, select an infinite antichain $\alpha:=(\alpha(n)) \succ B$ of $T$, and for every $n\in \N$, let \begin{equation*}
    y_n:=y+(1-\lambda)e_{\alpha(n)},
\end{equation*} where $\lambda:=\sum_{t\in A\cup B}y(t)$. By construction, $y_n\in B_X^+$, and since $(e_{\alpha(n)})$ is weakly null (it is equivalent to the canonical basis of $c_0$),
the sequence $(y_n)$ converges weakly to $y$. So there exists $n_0\in\N$ such that $y_{n_0}$ belongs to $S$. Now we have \begin{equation*}
    \norm{x+y_{n_0}}\geq \sum_{t\in A\cup B\cup \{\alpha(n_0)\}}\abs{x(s)+y_{n_0}(s)}=\sum_{t\in A}x(t)+\sum_{t\in A\cup B}y(t)+y(\alpha(n_0))=1+\lambda+(1-\lambda)=2. 
\end{equation*} The conclusion follows.

The second part of the statement follows from the fact that a Banach space with the Daugavet property cannot embed linearly into a Banach space with an unconditional basis (see \cite[Corollary~2.7]{KSSW00}). 
\end{proof}

Using similar ideas, we can produce a (non-convex) subset of $B_X$ which is SCD, but has no countable $\pi$-base for the relative weak topology. To the best of the authors' knowledge, this is the first known example of such a set.

\begin{proposition}\label{prop:non-convex_SCD_set_which_is_not_WCS}
    Consider the subset $\Sigma$ of $B_X$ given by \begin{equation*}
        \Sigma:=\{x\in B_X\colon \forall s\in T,\ x(s)\in \{-1,0,1\}\}.
    \end{equation*} Then $\Sigma$ is an SCD set, but $(\Sigma,w)$ has no countable $\pi$-base.
\end{proposition}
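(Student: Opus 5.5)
The plan has two independent halves: SCDness of $\Sigma$ comes from the strongly exposed points of $B_X$, and the failure of a countable $\pi$-base comes from a Daugavet-type inductive construction in the spirit of Theorem~\ref{thm:super_ADP_unit_balls_do_not_have_a_weak_pi_base}, now using a single branch of $T$.

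\emph{$\Sigma$ is SCD.} First I check that $\Pi\subseteq\Sigma$. Let $p\in\Pi$ and let $\alpha$ be the finite maximal antichain from Proposition~\ref{prop:stongly_exposed_points_characterization}. Every branch meets $\alpha$ at a coordinate where $\abs{p}=1$, and $\norm{p}=1$, so $p$ vanishes off $\alpha$. Hence $p$ takes values in $\{-1,0,1\}$. There are countably many finite antichains, so $\Pi$ is countable. For each $p\in\Pi$ fix a strongly exposing functional $f_p$ and consider the countable family of slices $S(\Sigma,f_p,1/k)$, $k\in\N$. Each is non-empty because it contains $p$. Since $\Sigma\subseteq B_X$, strong exposition gives $\diam S(\Sigma,f_p,1/k)\to0$ as $k\to\infty$.

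Now take an arbitrary slice $S(\Sigma,g,\eps)$. Since $\Pi\subseteq\Sigma\subseteq B_X$ and $\Pi$ is generating (Corollary~\ref{cor:strongly_exposed_points_are_generating_for_B_X}), we have $\sup g(\Sigma)=\sup g(B_X)$. Moreover $S(B_X,g,\eps)$ contains some $p\in\Pi$. This slice is relatively norm open in $B_X$, so it contains $B_X\cap B(p,r)$ for some $r>0$. Choosing $k$ with $\diam S(\Sigma,f_p,1/k)<r$ gives $S(\Sigma,f_p,1/k)\subseteq S(\Sigma,g,\eps)$, which proves that the family is determining.

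\emph{No countable $\pi$-base.} The key observation is the following. If $w\in\Sigma$, then on any branch $w$ has at most one non-zero coordinate, since its values are $\pm1$ and its norm is at most $1$. Hence $w$ equals $\pm1$ on some antichain $\gamma_w$ and vanishes elsewhere. Moreover, truncations $P_{T_m}w$ stay in $\Sigma$ and converge in norm to $w$, so finitely supported elements are norm dense in $\Sigma$.

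Let $\{V_n\}$ be non-empty relatively weakly open subsets of $\Sigma$. I construct inductively finite chains $C_1\subseteq C_2\subseteq\cdots$, each an initial segment of the next, together with $v_n\in V_n$ finitely supported. The requirement is that $v_n$ has a coordinate equal to $\pm1$ at some node of $C_n$. At step $n$, pick a finitely supported $w\in V_n$ with support in $T_m$, and let $c:=\max_\preceq C_{n-1}$. There are two cases.
\begin{enumerate}
\item If some branch through $c$ meets $\gamma_w$, I set $v_n:=w$. I then extend $C_{n-1}$ along this branch up to and including the node of $\gamma_w$.
\item Otherwise, I choose a node $s\succ c$ with $\abs{s}>m$; its path then misses $\gamma_w$. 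I choose an infinite antichain $(\alpha(k))\succ s$. Then $w+e_{\alpha(k)}\in\Sigma$, because every branch now meets at most one non-zero coordinate. Since $(e_{\alpha(k)})$ is $c_0$-like, hence weakly null, we get $w+e_{\alpha(k)}\to w$ weakly. So $v_n:=w+e_{\alpha(k)}\in V_n$ for $k$ large, and I extend the chain to $\alpha(k)$.
\end{enumerate}

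Set $f_n:=\sum_{t\in C_n}\sigma_te_t^*$, where $\sigma_t:=v_i(t)$ for $t$ the chosen $\pm1$-node of $v_i$, and $\sigma_t:=0$ otherwise. Then $\norm{f_n}\leq1$. For every $j\leq n$, $v_j$ meets the chain $C_n$ in exactly one non-zero coordinate. Signs may clash between different $v_i$, but this is harmless because we only need absolute values: $\abs{f_n(v_j)}=1$ for all $j\leq n$. Indeed, $v_j$ is non-zero at its own chosen node. At any other node $t$ of $C_n$ we have $v_j(t)=0$, because $v_j$ has at most one non-zero coordinate on the branch containing $C_n$. This holds even if $t$ is the chosen node of another $v_i$.

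A weak$^*$ cluster point $f$ of $(f_n)$ then satisfies $\abs{f(v_j)}=1$ for every $j$. Therefore $0\in\Sigma$ is not in $\overline{\{v_n\}}^w$, and $\{V_n\}$ is not a $\pi$-base.

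The main obstacle is the bookkeeping in this induction. One must ensure that the chains stay nested and that each $v_j$ meets the final branch exactly once, so that the single functional $f_n$ works for all $j\leq n$ simultaneously. The weakly null perturbation must also keep us inside $\Sigma$, and I expect the one-non-zero-per-branch observation to handle this.
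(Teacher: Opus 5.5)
Your proposal is correct and follows essentially the paper's argument. SCDness comes from $\Pi\subseteq\Sigma$ together with Corollary~\ref{cor:strongly_exposed_points_are_generating_for_B_X}; you check directly that the slices around the countably many strongly exposed points are determining, where the paper instead cites that $A$ is SCD iff $\cconv(A)$ is. The failure of a countable $\pi$-base rests on the same dichotomy as the paper's: either $V_n$ already contains an element with a $\pm1$ coordinate comparable with the current chain, or you add a weakly null perturbation $e_{\alpha(k)}$ with $\alpha(k)\succ c$, and you track a nested chain with $\abs{f(v_j)}=1$ rather than choosing signs $\theta_n$ with $\norm{\theta_1x_1+\dots+\theta_nx_n}=n$.
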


\begin{proof}
As already mentioned, to show that the set $\Sigma$ is SCD, it is sufficient (in fact equivalent) to prove that its closed convex hull is an SCD set. Since $\Sigma$ contains all the strongly exposed points of $B_X$, we get from Corollary~\ref{cor:strongly_exposed_points_are_generating_for_B_X} that $\cconv(\Sigma)=B_X$, which is indeed SCD.

To show that $\Sigma$ has no countable $\pi$-base for the relative weak topology, we will establish that given any sequence $(V_n)$ of non-empty relatively weakly open subsets of $\Sigma$, we can find $(x_n)\in \prod_{n\in \N}V_n$ and $(\theta_n)\subseteq \{-1,1\}$ such that for every $n\in\N$, we have \begin{equation*}
    \norm{\theta_1x_1+\dots+\theta_nx_n}=n.
\end{equation*} The conclusion then follows as in the proof of Theorem~\ref{thm:super_ADP_unit_balls_do_not_have_a_weak_pi_base}. So fix such a sequence $(V_n)$. We will construct the sequence $(x_n)$ inductively. For the first step, simply note that 0 is not an isolated point of $\Sigma$, because given any infinite antichain $\alpha:=(\alpha(n))$ of $T$, we have that the sequence $(e_{\alpha(n)})$ converges weakly to 0. So we can pick any non-zero $x_1\in V_1$ and $\theta_1\in\{-1,1\}$. Now assume that we have constructed $x_1,\dots,x_n$ and $\theta_1,\dots,\theta_n$ such that $\norm{\theta_1x_1+\dots+\theta_nx_n}=n$. Then we can find $A\in\A_f$ such that \begin{equation*}
    \sum_{t\in A}\abs{\theta_1x_1(t)+\dots+\theta_nx_n(t)}=n,
\end{equation*} and $A\subseteq \bigcup_{i=1}^n\supp(x_i)$. Observe that the former condition means that for every $t\in A$, all the non-zero $\theta_ix_i(t)$ have the same sign. We claim that there exist $x_{n+1}\in V_{n+1}$ and $s\in T$ such that $\abs{x_{n+1}(s)}=1$ and $A\cup\{s\}\in\A_f$. Indeed, if an element $y\in V_{n+1}$ would be such that $y(s)=0$ for every such $s$, then we could simply pick an antichain $\alpha:=(\alpha(n))\succ A$ and find $n_0\in\N$ large enough so that $y+e_{\alpha(n_0)}$ would belong to $V_{n+1}$. Then $x_{n+1}=y+e_{\alpha(n_0)}$ would satisfy the required condition with $s=\alpha(n_0)$. Now given such $x_{n+1}$ and $s$, we are left with two cases to consider. If $s\notin A$, then, taking $\theta_{n+1}=1$, we have
\begin{equation*}
    \norm{\theta_1x_1+\dots+\theta_nx_n+\theta_{n+1}x_{n+1}} \geq \sum_{t\in A\cup\{s\}}\abs{\theta_1x_1(t)+\dots+\theta_nx_n(t)+x_{n+1}(t)}=n+1,
\end{equation*} so we are done. Otherwise, let $\theta_{n+1}\in\{-1,1\}$ be such that $\theta_{n+1}x_{n+1}(s)$ has the same sign as all the non-zero $\theta_ix_i(s)$. Then \begin{equation*}
    \abs{\theta_1x_1(s)+\dots+\theta_nx_n(s)+\theta_{n+1}x_{n+1}(s)}=\abs{\theta_1x_1(s)+\dots+\theta_nx_n(s)}+1,
\end{equation*} and it immediately follows that $\norm{\theta_1x_1+\dots+\theta_nx_n+\theta_{n+1}x_{n+1}}=n+1$, as we wanted.
\end{proof}

\subsection{A renorming of the binary tree space whose unit ball is not SCD}\label{subsec:binary_tree_renorming} Our aim here is to prove that the set $C:=\cconv(B_X^+\cup B_X^-)$ is also not SCD. As a corollary, we will get an example of a Banach space with a 2-unconditional basis whose unit ball is not SCD.

First, let us observe that the set $\Pi^+$ is not generating for $B_X^+$. Indeed, it is easy to show that if $\Omega$ denotes the set $\Sigma\setminus \Pi$, then no point in $\Omega^+$ belongs to the closed convex hull of $\Pi^+$. However, the set $\Sigma^+=\Omega^+\cup\Pi^+$ is generating for $B_X^+$. 

\begin{lemma}
    The set $\Sigma^+$ is generating for $B_X^+$.
\end{lemma}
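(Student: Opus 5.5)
The plan is to reduce to finite dimensions, as in Corollary~\ref{cor:strongly_exposed_points_are_generating_for_B_X}, and then identify the extreme points of the positive unit ball of $X_n$. Since finitely supported elements are dense in $X$, and $P_{T_n}$ is a norm-one positive projection, the set $\bigcup_n B_{X_n}^+$ is dense in $B_X^+$. Indeed, for $x\in B_X^+$ we have $P_{T_n}x\in B_{X_n}^+$ and $P_{T_n}x\to x$. It therefore suffices to show that $B_{X_n}^+\subseteq\conv(\Sigma^+\cap X_n)$ for each $n$.

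Fix $n$. The set $B_{X_n}^+$ is a compact convex polytope in the finite-dimensional space $X_n$. It is cut out by the inequalities $x(t)\ge 0$ for $t\in T_n$ and $\sum_{t\in\beta}x(t)\le 1$ for every branch $\beta$ of $T_n$. By the Minkowski theorem, it is enough to show that every extreme point of $B_{X_n}^+$ has coordinates in $\{0,1\}$, since such points lie in $\Sigma^+$.

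To prove this, I would use the standard total unimodularity argument. The constraint matrix has rows indexed by the root-to-leaf paths of $T_n$ and columns indexed by the nodes of $T_n$. This is the incidence matrix of a laminar family: for each node $t$, the set of leaves below $t$, with the entry in row $\beta$, column $t$ equal to $1$ exactly when the leaf of $\beta$ lies below $t$. Read this way, each column of the transposed matrix is the indicator of an interval of leaves in the natural left-to-right order, so the matrix has the consecutive-ones property and is therefore an interval matrix, which is totally unimodular. Adjoining the identity rows from the nonnegativity constraints preserves total unimodularity. Since the right-hand sides are integers, every vertex of $B_{X_n}^+$ is integral, and combined with $0\le x(t)\le 1$ this forces $x(t)\in\{0,1\}$.

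If I want to avoid citing total unimodularity, I would instead give a direct induction on $n$. Write $x\in B_{X_n}^+$ as $x = x(\emptyset)e_\emptyset + x_0 + x_1$, where $x_i$ is supported on $T(i)$. Each $x_i$ then lies in $(1-x(\emptyset))$ times a copy of $B_{X_{n-1}}^+$. By induction, $x_0$ and $x_1$ are convex combinations of $\{0,1\}$-points of that copy, scaled by $1-x(\emptyset)$. The main obstacle is coupling these two convex combinations with the root term. I would handle it by a "common refinement" of the two decompositions, using the fact that $\{0,1\}$-points of the subtrees can be combined with weights forming a joint distribution on pairs. This shows that $x$ is a convex combination of the points $e_\emptyset$, $(1-x(\emptyset))\cdot$ (a sum $u_0+u_1$ of $\{0,1\}$-points), each of which lies in $\Sigma^+$ or is a combination with $0$.

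Both $e_\emptyset$ and $0$ belong to $\Sigma^+$. For this last step, it is cleaner to write $x$ directly as $x(\emptyset)(e_\emptyset+0)+(1-x(\emptyset))(u_0+u_1)$-type combinations, and then observe that $\{0,1\}$-vectors with at most one $1$ on each branch are exactly the elements of $\Sigma^+$. I expect the total-unimodularity route to be the shortest.
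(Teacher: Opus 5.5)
Your proposal is correct and follows the paper's route. The paper reduces to $X_n$ by density, applies Minkowski's theorem there, and identifies $\ext(B_{X_n}^+)$ with $\Sigma_n^+$; it simply calls that identification straightforward, and your total-unimodularity argument (or your induction, where the coupling is just the product of the two convex combinations) actually supplies it. One harmless wording slip: it is the column of the constraint matrix indexed by a node $t$ (equivalently, the corresponding row of its transpose) that is the indicator of an interval of leaves, but total unimodularity is invariant under transposition anyway.
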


\begin{proof}
    For every $n\in\N$, let $\Sigma_n:=\Sigma\cap X_n$. Then it is straightforward to check that $\Sigma_n^+=\ext(B_{X_n}^+)$. So the conclusion follows using the exact same argument as in Subsection~\ref{subsec:binary_tree_dentability}.
\end{proof}

Let us recall the concept of slicely countably determined points, which was introduced in \cite{LLMRZ24} as a localization or point-wise version of the global SCD property to study some SCD phenomena in the non-separable setting. Given a bounded convex subset $A\subseteq X$, we say that $x\in A$ is a \emph{slicely countably determined point of $A$} (SCD point of $A$ for short, writing $x\in\SCD(A)$), if there is a countable collection of slices $\{S_n\}_{n\in\mathbb{N}}$ of $A$ such that every sequence $(x_n)\in \prod_{n\in\N} S_n$ is generating for the point $a$, meaning that $a\in\cconv\{x_n\colon n\in\mathbb{N}\}$. We refer to \cite{LLMRZ24} for background on SCD points. In particular, it was proved there that the set $\SCD(A)$ is always closed and convex, and that if $A$ is additionally assumed to be separable, then $A$ is SCD if and only if every element of $A$ is an SCD point of $A$. In Theorem~\ref{thm:binary_tree_space_is_not_SCD}, we have essentially proved that 0 is not an SCD point of the positive unit ball of $X$. We can actually produce more of these points.   

\begin{proposition}\label{prop: no x in omega+ is SCD of B_X+}
     No element $x\in\Omega^+$ is an SCD point of $B_{X^+}$. 
\end{proposition}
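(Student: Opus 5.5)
The plan is to adapt the proof of Theorem~\ref{thm:binary_tree_space_is_not_SCD} by running the ``Daugavet set'' construction along a single branch that avoids $\supp(x)$. We then separate $x$ from the chosen sequence with the norm-one functional given by that branch.

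First I describe $\Omega^+$ concretely. If $x\in\Sigma^+$, then $x$ takes values in $\{0,1\}$ and $\norm{x}\le 1$. So $\supp(x)$ is an antichain, which is finite because $x\in X$; if it were infinite, $x$ would be the indicator of an infinite antichain, which behaves like $\sum e_n$ in $c_0$ and is not in the completion. By Proposition~\ref{prop:stongly_exposed_points_characterization}, $x\notin\Pi$ means that $\supp(x)$ is not a finite maximal antichain, and this includes the case $x=0$. Hence there is an infinite branch $\beta$ of $T$ with $\beta\cap\supp(x)=\emptyset$.

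Choose $r\in\beta$ with $\abs{r}$ larger than the lengths of all elements of $\supp(x)$. Then no element of $\supp(x)$ is comparable with $r$. Indeed, its predecessors lie on $\beta$ and so miss $\supp(x)$, and its successors are too long. Set $C_0:=\{s\in T\colon s\preceq r\}$. Every branch $\gamma\supseteq C_0$ then satisfies $\gamma\cap\supp(x)=\emptyset$.

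Fix slices $\{S_n\}$ of $B_X^+$. I construct inductively finite chains $C_0\subseteq C_1\subseteq C_2\subseteq\cdots$, where each new chain strictly extends the previous one inside $T(r)$, together with points $x_n\in S_n$ satisfying
\begin{equation*}
\sum_{t\in C_n}x_n(t)=1 \quad\text{and}\quad \supp(x_n)\cap T(\max\nolimits_\preceq C_n)\subseteq\{\max\nolimits_\preceq C_n\}.
\end{equation*}
The inductive step goes as follows. Finitely supported positive elements are dense in $B_X^+$ by truncation, and $S_{n}$ is relatively open, so I pick a finitely supported $y\in S_{n}$. Next I extend $C_{n-1}$ by a finite chain $B$ so far that $y$ vanishes strictly below $\max_\preceq B$, and set $\lambda:=\sum_{t\in C_{n-1}\cup B}y(t)\le 1$. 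I then take an infinite antichain $(\alpha(k))$ below $\max_\preceq B$ and put $y_k:=y+(1-\lambda)e_{\alpha(k)}$.

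Each $y_k$ lies in $B_X^+$. The only branches that gain mass are those through $\alpha(k)$, and their $y$-mass is at most $\lambda$, so their total is at most $1$. Moreover $y_k\to y$ weakly, so some $y_{k_0}\in S_{n}$. Set $x_n:=y_{k_0}$ and $C_n:=C_{n-1}\cup B\cup\{\alpha(k_0)\}$; this makes the sum over $C_n$ equal to $1$. Since the chains are increasing, for $i\le n$ we get $\sum_{t\in C_n}x_i(t)\ge \sum_{t\in C_i}x_i(t)=1$, and positivity together with $\norm{x_i}\le1$ forces equality.

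Now let $\gamma:=\bigcup_n C_n$. This is a branch through $r$, and $f:=\sum_{t\in\gamma}e_t^*$ is a norm-one functional on $X$ because $\gamma$ is a chain. It satisfies $f(x_n)=1$ for all $n$, and $f(x)=0$ since $\gamma\supseteq C_0$ avoids $\supp(x)$. Hence $\cconv\{x_n\}\subseteq\{f=1\}$, so $x\notin\cconv\{x_n\colon n\in\N\}$, and $x$ is not an SCD point of $B_X^+$.

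The main obstacle is the bookkeeping in the inductive step. The chain has to be threaded through $r$ from the start, so that the limiting branch misses $\supp(x)$. At the same time, later modifications must not destroy the equalities $\sum_{t\in C_i}x_i(t)=1$ for earlier $i$; that is why each $x_i$ is only perturbed strictly below the current end of the chain, and why the perturbed $y_k$ must stay in $B_X^+$.
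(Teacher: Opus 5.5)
Your argument is essentially the paper's. You choose a cone $C_0\cup T(r)$ disjoint from $\supp(x)$ (the paper's $S=A\cup T(\max(A))$), run the construction from the proof of Theorem~\ref{thm:binary_tree_space_is_not_SCD} inside it, and separate $x$ by a norm-one functional supported on the cone that equals $1$ at every $x_n$ and $0$ at $x$. Building one nested chain and taking $f=\sum_{t\in\gamma}e_t^*$ directly replaces the paper's identity $\norm{P_S(x_1+\dots+x_n)}=n$ followed by a weak$^*$ cluster point. Your description of $\Omega^+$ (finite, non-maximal antichain support, including $x=0$) makes explicit why such a cone exists, which the paper leaves implicit.

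There is one bookkeeping slip you need to repair. You claim that every branch through $\alpha(k)$ carries $y$-mass at most $\lambda=\sum_{t\in C_{n-1}\cup B}y(t)$. This requires $C_{n-1}\cup B$ to contain every predecessor of $\max B$, and your chains do not have this property:
\begin{itemize}
\item $C_n:=C_{n-1}\cup B\cup\{\alpha(k_0)\}$ skips the nodes strictly between $\max B$ and $\alpha(k_0)$. An infinite antichain in $T(\max B)$ contains at most two immediate successors of $\max B$, so this skipping generally happens.
\item $B$ itself was only required to be a chain, so it may skip nodes as well.
\end{itemize}
At the next step the new finitely supported $y$ may charge such a skipped node. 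Then $\norm{y_k}>1$ is possible, so $y_k\notin B_X^+$, and you cannot obtain $y_{k_0}\in S_{n+1}$ by weak convergence inside $B_X^+$.

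The fix is to work with full initial segments throughout. Pick $b\succ\max C_{n-1}$ with $\abs{b}$ larger than the length of every element of $\supp(y)$, set $\lambda:=\sum_{t\preceq b}y(t)$, and put $C_n:=\{t\colon t\preceq\alpha(k_0)\}$. Since $y(t)=0$ for all $t\succ b$, you still get $\sum_{t\in C_n}x_n(t)=\lambda+(1-\lambda)=1$. The chains remain nested, and $\gamma=\bigcup_n C_n$ is then genuinely a branch, as you assert. (The paper's proof of Theorem~\ref{thm:binary_tree_space_is_not_SCD} also tacitly needs $A\cup B$ to be a full initial segment, for the same reason.) With this change your proof is complete.
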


\begin{proof}
    Let $x\in\Omega^+$ and let $(S_n)$ be a sequence of slices of $B_X^+$. Then we can find $A\in\A_f$ for which the set $S:=A\cup T(\max(A))$ does not intersect the support of $x$. Let $P_S$ be the norm one projection from $X$ onto $[e_s]_{s\in S}$. Using the exact same argument as the one presented in the proof of Theorem~\ref{thm:binary_tree_space_is_not_SCD}, we can construct inductively a sequence $(x_n)\in \prod_{n\in\N} S_n$ of finitely supported elements satisfying \begin{equation*}
        \norm{P_S(x_1+\dots+x_n)}=n
    \end{equation*} for every $n\in \N$. In particular, there exists $f\in S_{X^*}$ supported on $S$ (meaning that $f(e_t)=0$ for every $t\in T\setminus S$) such that $f(x_n)=1$ for every $n\in \N$. Since $S\cap\supp(x)=\emptyset$, we have $f(x)=0$, hence $x\notin \cconv\{x_n\colon n\in\N\}$. The conclusion follows. 
\end{proof}

\begin{proposition}
 Let $x\in S_X^+$. If $x$ is an SCD point of $C$, then $x$ is an SCD point of $B_{X^+}$. In particular, $C$ is not an SCD set. 
\end{proposition}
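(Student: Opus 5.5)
The plan is to transfer a countable determining family of slices of $C$ for the point $x$ into one of $B_X^+$. The tool is a norming functional for $x$ that is nonnegative on $B_X^+$ and nonpositive on $B_X^-$.

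First, fix $x\in S_X^+$. Choose a branch $\beta$ with $\sum_{t\in\beta}x(t)=1$ and set $g:=\sum_{t\in\beta}e_t^*$. Then $\norm{g}=1$ and $g(x)=1$. Moreover $g\le 1$ on $C\subseteq B_X$, $g\ge 0$ on $B_X^+$, and $g\le 0$ on $B_X^-$.

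Next, let $\{S_n\}$ with $S_n=S(C,f_n,\delta_n)$ witness that $x\in\SCD(C)$. Since $C=\cconv(B_X^+\cup B_X^-)$, we have $\sup f_n(C)=\max(\sup f_n(B_X^+),\sup f_n(B_X^-))$. Split the indices into two sets:
\begin{itemize}
\item $N_1$, the indices $n$ with $\sup f_n(B_X^+)>\sup f_n(C)-\delta_n/2$;
\item $N_2$, the remaining indices.
\end{itemize}
For $n\in N_1$, the set $S_n':=S(B_X^+,f_n,\delta_n/2)$ is a slice of $B_X^+$ contained in $S_n$. For $n\in N_2$, the supremum of $f_n$ over $C$ is attained up to $\delta_n$ on $B_X^-$, so we can fix once and for all a point $z_n\in S_n\cap B_X^-$. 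The candidate determining family for $x$ in $B_X^+$ is $\{S_n'\}_{n\in N_1}$. If $N_1$ were empty, the argument below would give $x\in\cconv\{z_n\}\subseteq B_X^-$, which contradicts $g(x)=1$.

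Now take any $(y_n)_{n\in N_1}\in\prod_{n\in N_1}S_n'$. The combined sequence consisting of the $y_n$ for $n\in N_1$ and the $z_n$ for $n\in N_2$ lies in $\prod_n S_n$, so $x\in\cconv(\{y_n\}\cup\{z_n\})$. Take convex combinations $u_k=\lambda_k a_k+(1-\lambda_k)b_k\to x$, where $a_k\in\conv\{y_n\}$ and $b_k\in\conv\{z_n\}$. Applying $g$ gives $g(u_k)\le\lambda_k$, so $\lambda_k\to1$. Since $\norm{b_k}\le1$, it follows that $a_k\to x$, and therefore $x\in\cconv\{y_n\colon n\in N_1\}$. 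Hence $x\in\SCD(B_X^+)$.

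Finally, for the ``in particular'' part, I use the point $e_{(0)}$. It lies in $\Sigma^+$ and has norm one. It is not in $\Pi$, because $\{(0)\}$ is not a maximal antichain. So $e_{(0)}\in\Omega^+\cap S_X^+$, and by Proposition~\ref{prop: no x in omega+ is SCD of B_X+} it is not an SCD point of $B_X^+$. By the first part, it is then not an SCD point of $C$. Since $C$ is separable, the characterization from \cite{LLMRZ24} shows that $C$ is not SCD.

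The step I expect to need the most care is the case split. It must produce slices of $B_X^+$ genuinely contained in the $S_n$, while handling the ``negative'' slices by choices that $g$ can discard. That is why $x$ must be positive with norm one: otherwise no such $g$ exists.
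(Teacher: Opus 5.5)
Your proof is correct and follows the paper's argument. You split the slices of $C$ according to whether they meet $B_X^+$ (in your version, whether $B_X^+$ comes within $\delta_n/2$ of the supremum), fill the remaining slices with points of $B_X^-$, and use the branch functional $g=\sum_{t\in\beta}e_t^*$ to force $\lambda_k\to 1$ and hence $x\in\cconv\{y_n\colon n\in N_1\}$. Your explicit choice of $z_n\in S_n\cap B_X^-$, and of the concrete point $e_{(0)}\in\Omega^+\cap S_X^+$ for the final part, is slightly more careful than the paper: it picks arbitrary $x_n\in S_n$, and it speaks of all of $\Omega^+$, which also contains $0\notin S_X^+$.
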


\begin{proof}
   Let $(S_n)$ be a sequence of slices of $C$ which is determining for some element $x\in S_{X^+}$, and let $\M$ be the set of all $n\in\N$ for which $S_n$ intersects $B_X^+$. Then the sequence $(S_n\cap B_X^+)_{n\in\M}$ is also determining for $x$. Indeed, let $(x_n)_{n\in \M}\in\prod_{n\in\M}S_n\cap B_X^+$, and fix $\eps\in(0,1)$. For every $n\in \N\setminus\M$, pick an arbitrary $x_n\in S_n$. Since $(S_n)$ is a determining sequence for $x$ in $C$, we can find $y\in \conv\{x_n\colon n\in\M\}$, $z\in \conv\{x_n\colon n\notin \M\}$ and $\lambda\in[0,1]$ such that \begin{equation*}
   \norm{x-(\lambda y+(1-\lambda)z)}<\eps.
   \end{equation*} Let $\beta$ be a branch of $T$ such that $\sum_{t\in\beta}x(t)=1$. Since $z\in B_X^-$, we have \begin{equation*}
      1-\lambda\leq 1-\lambda\sum_{t\in\beta}y(t)-(1-\lambda)\sum_{t\in\beta}z(t)\leq\norm{x-(\lambda y+(1-\lambda)z)}<\eps.
   \end{equation*}
   Therefore, \begin{equation*}
       \norm{x-y}\leq \norm{x-(\lambda y+(1-\lambda)z)}+2(1-\lambda)<3\eps,
   \end{equation*} which means that $x\in\cconv\{x_n\colon n\in\M\}$, as we wanted.
   
   The last part of the claim follows from Proposition \ref{prop: no x in omega+ is SCD of B_X+}, yielding that no $x\in\Omega^+$ is an SCD point of $C$. By \cite[Lemma~2.8]{LLMRZ24} $C$ cannot be an SCD set.
\end{proof}

As a corollary, we the following. 

\begin{theorem}
    There exists a Banach space $Y$ with a 2-unconditional basis whose unit ball is not SCD.
\end{theorem}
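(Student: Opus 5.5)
We take $Y$ to be the space $X=X_T$ equipped with the Minkowski functional of $C:=\cconv(B_X^+\cup B_X^-)$. The proof has three steps: show that $C$ is the unit ball of an equivalent norm, check that the basis $(e_t)$ is 2-unconditional for it, and then apply the previous proposition, which says that $C$ is not SCD.

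For the first step, $C$ is closed, convex and bounded, since $C\subseteq B_X$. It is symmetric, because $-B_X^+=B_X^-$. For absorbency, any $x\in B_X$ splits as $x=x^+-x^-$ with $x^\pm\in B_X^+$, since the norm is a lattice norm. Hence
\begin{equation*}
\tfrac12 x=\tfrac12 x^+ +\tfrac12(-x^-)\in C .
\end{equation*}
So $\tfrac12B_X\subseteq C\subseteq B_X$, and the gauge $\nnorm{\cdot}$ of $C$ satisfies $\norm{x}\leq\nnorm{x}\leq 2\norm{x}$. In particular $Y=(X,\nnorm{\cdot})\simeq X_T$, $(e_t)$ is still a basis of $Y$, and $B_Y=C$.

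For unconditionality we only need the constant to be at most 2. For a choice of signs $\varepsilon=(\varepsilon_t)$, let $M_\varepsilon$ be the sign-change operator. It is an isometry of $\norm{\cdot}$ because the original basis is 1-unconditional. Therefore
\begin{equation*}
\nnorm{M_\varepsilon x}\leq 2\norm{M_\varepsilon x}=2\norm{x}\leq 2\nnorm{x}.
\end{equation*}
This shows that $(e_t)$ is a 2-unconditional basis of $Y$; the same computation bounds the coordinate projections by 2.

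Finally, the preceding proposition shows that $C=B_Y$ is not an SCD set. This property is intrinsic to the set $C$ inside the space, since slices depend only on the linear topology and the dual, and both are unchanged under an equivalent renorming. Hence $B_Y$ is not SCD. The only point needing care is the inclusion $\tfrac12B_X\subseteq C$, which relies on the lattice decomposition $x=x^+-x^-$ with $\norm{x^\pm}\leq\norm{x}$. This holds because the norm of $X$ is a supremum over chains of sums of absolute values of coordinates.
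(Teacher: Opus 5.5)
Your proof is correct and takes essentially the same route as the paper. Both renorm $X_T$ by the gauge of $C=\cconv(B_X^+\cup B_X^-)$, use $\tfrac12 B_X\subseteq C\subseteq B_X$ to get a 2-equivalent norm and hence a 2-unconditional basis, and then invoke the preceding proposition that $C$ is not SCD. You also spell out two points the paper leaves implicit: the lattice decomposition $x=x^+-x^-$ behind $\tfrac12 B_X\subseteq C$, and why the SCD property of $C$ does not change under an equivalent renorming.
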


\begin{proof}
    Let $Y:=(X,\nnorm{\cdot})$, where $\nnorm{\cdot}$ is the equivalent norm on $X$ given by the gauge (a.k.a. Minkowski functional) of the set $C$. Since $\frac{1}{2}B_X\subseteq C \subseteq B_X$, we have that $\norm{\cdot}\leq \nnorm{\cdot}\leq 2\norm{\cdot}$. In particular, the family $(e_t)_{t\in T}$ is 2-unconditional in $Y$. Now by the above, $B_Y=C$ is not an SCD set, so we are done.

\end{proof}

\subsection{The unit ball of the binary tree space has a countable $\pi$-base for the relative weak topology}\label{subsec:weak_density_of_points_of_continuity} In this subsection, we show that the points of continuity of $B_X$ are weakly dense in $B_X$. In particular, $(B_X,w)$ has countable $\pi$-base.

\begin{proposition}\label{prop:points_of_continuity_characterization}
    A point $x\in S_X$ is a point of continuity of $B_X$ if and only if $\norm{P_{\beta}(x)}=1$ for every branch $\beta$ of $T$.
\end{proposition}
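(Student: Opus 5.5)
The plan is to prove both implications directly, using only two facts. First, $x\in X$ has small tails: $\norm{P_{\perp_n}(x)}\to 0$, because $x$ is a norm limit of finitely supported vectors and $P_{\perp_n}$ has norm one. Second, the norm of a vector is a supremum of sums of $\abs{x(t)}$ over branches, and every chain extends to a branch.

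\emph{Necessity.} Suppose some branch $\beta$ satisfies $\norm{P_\beta(x)}=1-2\delta$ with $\delta>0$. I would show $x$ is not a point of continuity by exhibiting a sequence in $B_X$ that converges weakly to $x$ while staying at distance $\delta$ from it.
\begin{enumerate}
    \item Pick $n$ with $\norm{P_{\perp_n}(x)}<\delta$, and let $s$ be the element of $\beta$ of length $n$.
    \item Pick an infinite antichain $\alpha=(\alpha(k))\succ s$ and set $x_k:=x+\delta\,\sign(x(\alpha(k)))e_{\alpha(k)}$, with the convention $\sign(0)=1$.
    \item Check $x_k\in B_X$. A branch not passing through $\alpha(k)$ gives the same sum as for $x$, hence at most $1$. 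A branch $\gamma$ through $\alpha(k)$ passes through $s$, so it coincides with $\beta$ on $T_n$. Therefore its sum is at most
    \begin{equation*}
    \sum_{t\in\beta\cap T_n}\abs{x(t)}+\norm{P_{\perp_n}(x)}+\delta\leq (1-2\delta)+\delta+\delta=1.
    \end{equation*}
\end{enumerate}
Since $(e_{\alpha(k)})$ is equivalent to the $c_0$ basis, it is weakly null, so $x_k\to x$ weakly. On the other hand $\norm{x_k-x}=\delta$ for every $k$. Hence the identity $(B_X,w)\to(B_X,\norm{\cdot})$ is not continuous at $x$.

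\emph{Sufficiency.} Assume $\norm{P_\beta(x)}=1$ for every branch $\beta$. Fix $\eps>0$ and choose $n$ with $\norm{P_{\perp_n}(x)}<\eps$. Then for every branch $\beta$,
\begin{equation*}
\sum_{t\in\beta\cap T_n}\abs{x(t)}\geq 1-\eps.
\end{equation*}
Let $\eta:=\eps/(n+1)$ and consider the relatively weakly open set
\begin{equation*}
W:=\{y\in B_X\colon \abs{y(t)-x(t)}<\eta \text{ for all } t\in T_n\},
\end{equation*}
which is defined by finitely many coordinate functionals. The steps for $y\in W$ are as follows.
\begin{enumerate}
    \item Lower bound on the head. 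For any branch $\beta$, the set $\beta\cap T_n$ has $n+1$ elements, so $\sum_{t\in\beta\cap T_n}\abs{y(t)}\geq 1-2\eps$.
    \item Bound on the tail along a branch. Since $\norm{y}\leq 1$, this forces $\sum_{t\in\beta\cap\perp_n}\abs{y(t)}\leq 2\eps$ for every branch $\beta$.
    \item Bound on the tail in norm. Every chain contained in $\perp_n$ extends to a branch, so $\norm{P_{\perp_n}(y)}\leq 2\eps$.
    \item Head difference. Every chain in $T_n$ has at most $n+1$ elements, so $\norm{P_{T_n}(y-x)}\leq (n+1)\eta=\eps$.
    \item Conclusion. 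Combining,
    \begin{equation*}
    \norm{y-x}\leq \norm{P_{T_n}(y-x)}+\norm{P_{\perp_n}(y)}+\norm{P_{\perp_n}(x)}\leq 4\eps.
    \end{equation*}
\end{enumerate}
Thus $W$ is a weak neighbourhood of $x$ in $B_X$ of diameter at most $8\eps$, and $x$ is a point of continuity.

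The main point to get right is the choice of a single level $n$ that works uniformly over all branches. In the sufficiency part, the head estimate over $T_n$ is uniform simply because the tail bound $\norm{P_{\perp_n}(x)}<\eps$ is uniform, so no compactness argument over the Cantor set of branches is needed. The only delicate verification is step 3 of the necessity part: every branch through $\alpha(k)$ must agree with $\beta$ on $T_n$, which holds because $\alpha(k)\succ s$ and $s$ has length $n$.
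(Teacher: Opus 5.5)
Your proof is correct and follows essentially the same route as the paper. For necessity, you perturb $x$ by a small multiple of $e_{\alpha(k)}$ along an infinite antichain below a point of $\beta$ past which the tail of $x$ is small; for sufficiency, you control the coordinates on $T_n$ and then use $\norm{y}\leq 1$ to force a uniform tail bound, exactly as the paper does. The paper phrases the sufficiency part with weakly convergent nets rather than your explicit neighbourhood $W$, and it does not need your sign convention $\sign(x(\alpha(k)))$, but these differences are cosmetic.
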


\begin{proof}
First, assume that there exists $\eps>0$ and a branch $\beta$ of $T$ such that $\norm{P_{\beta}(x)}<1-\varepsilon$. Since $x$ belongs to the closure of $c_{00}(T)$, we can find $s\in\beta$ such that $\norm{P_{T(s)}(x)}<\frac{\eps}{2}$. Let $\alpha:=(\alpha(n))\succ s$ be an infinite antichain of $T(s)$, and for every $n\in\N$, define $x_n:=x+\frac{\eps}{2}e_{\alpha(n)}$. By construction, $x_n\in B_X$ for every $n\in \N$, the sequence $(x_n)$ converges to $x$ weakly, and $\norm{x-x_n}\geq \frac{\eps}{2}$ for every $n\in\mathbb{N}$, so $x$ is not a point of continuity of $B_X$. 

Second, assume that $\norm{P_{\beta}(x)}=1$ for every branch $\beta$ of $T$, pick a net $(x_a)$ in $B_X$ which converges weakly to $x$, and fix $\eps>0$. Then take $n\in\N$ such that $\norm{P_{\perp_n}(x)}<\eps$. Note that because of our assumption, this means that $\sum_{t\preceq s}\abs{x(t)}>1-\eps$ for every $s\in L_n$. Since $(x_a)$ converges weakly to $x$, we can find $a$ in the corresponding directed set such that for every $b \geq a$ and for every $s\in T_n$, we have $\abs{x_b(s)-x(s)}<\frac{\eps}{n+1}$. In particular, $\norm{P_{T_n}(x_b-x)}<\eps$. Furthermore, for every $s\in L_n$, we have \begin{equation*}
    \sum_{t\preceq s}\abs{x_b(t)}>\sum_{t\preceq s}\abs{x(t)}-\eps>1-2\eps.
\end{equation*} Therefore, $\norm{P_{\perp_n}(x_b)}<2\eps$, and it immediately follows that $\norm{x_b-x}<4\eps$. So $(x_a)$ converges in norm to $x$, and $x$ is a point of continuity of $B_X$.  
\end{proof}

Let $\B$ be the set of all branches of $T$. For every $\beta\in\B$ and every $f\in X^*$, we set \begin{equation*}
    l_\beta(f):=\limsup_{t\in\beta}\abs{f(e_t)}.
\end{equation*} The following lemma is straightforward.

\begin{lemma}
    For every $f\in X^*$, the series $\sum_{\beta\in\B}l_\beta(f)$ converges. In particular, $l_\beta(f)=0$ for all but countably many $\beta$ in $\B$.
\end{lemma}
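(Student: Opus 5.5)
The idea is to show that for every finite set $\mathcal{F}\subseteq\B$ of distinct branches,
\begin{equation*}
\sum_{\beta\in\mathcal{F}}l_\beta(f)\leq\norm{f}.
\end{equation*}
This bounds every partial sum, so the series converges. Since the terms are non-negative, only countably many can be non-zero (for each $k$, at most $k\norm{f}$ branches satisfy $l_\beta(f)>1/k$).

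Fix $\mathcal{F}=\{\beta_1,\dots,\beta_m\}$ and $\eps>0$. Distinct branches eventually separate, so I first choose $N$ such that the sets $\beta_i\cap\perp_N$, $i=1,\dots,m$, are pairwise totally incomparable. Concretely, $N$ is larger than the length of every pairwise meet $\beta_i\wedge\beta_j$, so that the nodes $\beta_i(N+1)$ are distinct and the subtrees $T(\beta_i(N+1))$ are pairwise disjoint and incomparable. By the definition of $\limsup$, for each $i$ I then pick a node $t_i\in\beta_i$ with $\abs{t_i}>N$ and $\abs{f(e_{t_i})}>l_{\beta_i}(f)-\eps/m$.

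The nodes $t_1,\dots,t_m$ lie in distinct subtrees $T(\beta_i(N+1))$, so they are pairwise incomparable and form an antichain. Hence the vector
\begin{equation*}
x:=\sum_{i=1}^m\sign(f(e_{t_i}))e_{t_i}
\end{equation*}
has norm $1$: every chain meets the antichain in at most one point (equivalently, apply Lemma~\ref{lem:totally_incomparable_decomposition}). It follows that
\begin{equation*}
\norm{f}\geq f(x)=\sum_{i=1}^m\abs{f(e_{t_i})}>\sum_{i=1}^m l_{\beta_i}(f)-\eps .
\end{equation*}
Letting $\eps\to0$ gives the claimed bound.

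The only point requiring care is choosing the $t_i$ along each branch beyond the splitting level, so that they form an antichain. This is immediate once $N$ exceeds the length of every pairwise meet, so I do not expect a real obstacle.
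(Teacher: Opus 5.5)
Your proof is correct and follows the same argument as the paper. You choose $N$ beyond all pairwise splitting levels so that any nodes taken from the tails $\beta_j\cap\perp_N$ form an antichain. You then test $f$ against the signed sum of the corresponding basis vectors, which has norm at most $1$ (strictly, "at most $1$" rather than "$=1$" if some $f(e_{t_i})=0$, which changes nothing), and pass to the $\limsup$, a step you make explicit with $\eps$ where the paper leaves it implicit.
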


\begin{proof}
 Let $(\beta_j)_{j=1}^k$ be a finite sequence in $\B$. Then we can find $N\in\N$ such that every sequence $(s_j)_{j=1}^k$ in $\prod_{j=1}^k(\beta_j\cap \perp_N)$ is an antichain of $T$. In particular, for every such sequence, we have \begin{equation*}
 \sum_{j=1}^k\abs{f(e_{s_j})}=\scal{f,\sum_{j=1}^k\sign(f(e_{s_j}))e_{s_j}}\leq \norm{f}.
 \end{equation*} Therefore, \begin{equation*}
     \sum_{j=1}^kl_{\beta_j}(f)\leq \norm{f},
 \end{equation*} and conclusion follows. 
\end{proof} From this, we deduce that every functional $f\in X^*$ has small values outside of a set of the form $T_n\cup \bigcup_{j=1}^k\beta_j$ in the following sense. 

\begin{lemma}\label{lem:functionals_have_small_values_on_a_big_part_of_the_tree}
Let $f\in X^*$ and $\varepsilon>0$. Then there exist $\beta_1,\dots,\beta_k\in \B$ and $n\in\N$ such that for every $s\in \perp_n \setminus \bigcup_{i=1}^k \beta_i$, we have $|f(e_s)|<\varepsilon$.
\end{lemma}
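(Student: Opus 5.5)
We argue by contradiction, combining the previous lemma with a compactness (König-type) argument on the binary tree. Suppose the conclusion fails for some $f\in X^*$ and $\eps>0$. Let $E:=\{s\in T\colon \abs{f(e_s)}\geq\eps\}$. We first observe that $E$ contains no infinite antichain. Indeed, if $s_1,\dots,s_m\in E$ are pairwise incomparable, then $\norm{\sum_j\sign(f(e_{s_j}))e_{s_j}}=1$, since every chain meets an antichain in at most one point. Hence $m\eps\leq\sum_j\abs{f(e_{s_j})}\leq\norm{f}$, so every antichain in $E$ has at most $\norm{f}/\eps$ elements.

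Next we show that $E$ is covered by finitely many branches, up to a finite set. Consider the downward closure $D:=\{t\in T\colon \exists s\in E,\ t\preceq s\}$, which is a subtree of $T$. Call a node $t\in D$ \emph{infinite} if $T(t)\cap E$ is infinite. The infinite nodes form a subtree $D_\infty$. Since every node has only two immediate successors, each infinite node has at least one infinite immediate successor. We claim that $D_\infty$ has at most $K:=\lfloor\norm{f}/\eps\rfloor$ branches, which we call $\beta_1,\dots,\beta_k$.

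To prove the claim, suppose $D_\infty$ contains $K+1$ distinct branches. Choose a level $N$ beyond which these branches have pairwise distinct nodes $t_1,\dots,t_{K+1}$, all infinite and pairwise incomparable. Pick $s_j\in E\cap T(t_j)$. These $s_j$ form an antichain in $E$ of size $K+1$, which is a contradiction.

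It remains to show that $E\setminus\bigcup_i\beta_i$ is finite. Take $n$ larger than the splitting levels of the $\beta_i$ and larger than the level at which they become pairwise distinct. Then the nodes of $D_\infty$ at levels beyond $n$ lie exactly on $\bigcup_i\beta_i$. Any $s\in E\cap\perp_n$ off these branches lies in $T(u)$, where $u$ is either an immediate successor of some node on $\beta_i\cap\perp_{n-1}$ that is not on $\beta_i$, or a node at level $n+1$ outside $D_\infty$.

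This step is the main obstacle, because the first kind of node can occur infinitely often. Each such $u$ is non-infinite, so $T(u)\cap E$ is finite, but we need only finitely many of these sets to be non-empty. We handle this with the antichain bound again. The nodes $u$ hanging off the branches, taken over all levels, form an antichain. Hence at most $K$ of them satisfy $T(u)\cap E\neq\emptyset$, because choosing one point of $E$ in each such $T(u)$ produces an antichain in $E$. The nodes at level $n+1$ are finitely many. So $E\setminus\bigcup_i\beta_i$ is a finite union of finite sets, hence finite.

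Enlarging $n$ past the levels of this finite leftover set, we get $\abs{f(e_s)}<\eps$ for every $s\in\perp_n\setminus\bigcup_{i=1}^k\beta_i$. Note that the summability lemma on $l_\beta$ is not needed in this argument, since the antichain bound already gives the required finiteness directly.
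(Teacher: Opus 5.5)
Your proof is correct, but it takes a different route from the paper's.

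The paper first invokes the preceding lemma, $\sum_{\beta\in\B}l_\beta(f)\le\norm{f}$, to fix the finitely many branches $\beta_1,\dots,\beta_k$ with $l_\beta(f)\geq\eps$. It then argues by contradiction. Suppose there were nodes $s_i$ of increasing length, off these branches, with $\abs{f(e_{s_i})}\geq\eps$. The antichain bound (at most $\norm{f}/\eps$ pairwise incomparable such nodes) forces infinitely many of them onto a single branch $\beta$. That branch would satisfy $l_\beta(f)\geq\eps$ without being one of the $\beta_j$, a contradiction.

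You instead choose the branches directly, as the branches of the subtree $D_\infty$ of nodes having infinitely many points of $E$ above them. You then use the antichain bound twice:
\begin{enumerate}
\item to show that $D_\infty$ has at most $\lfloor\norm{f}/\eps\rfloor$ branches;
\item to show that only finitely many subtrees hanging off these branches meet $E$. Their roots do form an antichain once $n$ is past the separation levels, as you claim.
\end{enumerate}

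Both arguments rest on the same estimate: antichains in $\{s\colon\abs{f(e_s)}\geq\eps\}$ have at most $\norm{f}/\eps$ elements. The paper's version is shorter because it outsources the choice of branches to the $l_\beta$-summability lemma. It also leaves implicit the Ramsey/K\"onig step that extracts an infinite chain from a sequence with no large antichains. Yours is self-contained and does not need the summability lemma. It makes that extraction step explicit, and shows in passing that $E\setminus\bigcup_i\beta_i$ is finite and that $k\le\norm{f}/\eps$.

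Two cosmetic remarks. You open with ``suppose the conclusion fails'', but never use that assumption; your argument is in fact a direct proof. Also, if $E$ is finite then $D_\infty=\emptyset$, and you should simply take $k=0$ (or any single branch).
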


\begin{proof}
By the previous lemma, there exists $\beta_1,\dots,\beta_k\in \B$ such that $l_\beta(f)<\varepsilon$ for every $\beta\in\B\setminus\{\beta_1,\dots,\beta_k\}$. Suppose that for every $n\in\N$, we can find $s\in \perp_n \setminus \bigcup_{j=1}^k \beta_j$ such that $\abs{f(e_s)}\geq\varepsilon$. Then we can construct inductively a sequence $(s_i)$ in $T\setminus \bigcup_{j=1}^k \beta_j$ such that $\abs{s_{i+1}}>\abs{s_i}$ and $\abs{f(e_{s_i})}\geq\varepsilon$ for every $i$. Now observe that by the latter condition, there can be at most $\frac{\norm{f}}{\eps}$ of these points which are incomparable for $\preceq$. Therefore, we can find a branch $\beta$ of $T$ which contains infinitely many of the $s_i$. By construction, $\beta\in\B\setminus\{\beta_1,\dots,\beta_n\}$, and $l_\beta(f)\geq \eps$, contradicting our initial claim. 
\end{proof}

Given $n\in\N$ and $a_1,\dots,a_n$ in the interval $(-\eps,\eps)$, we can always choose inductively $\theta_1,\dots,\theta_n\in\{-1,1\}$ such that $\sum_{i=1}^m\theta_ia_i$ belongs to $(-\eps,\eps)$ for every $m\leq n$. The following lemma allows us, given $k$ such sequences of points $a_1^j,\dots,a_n^j\in(-\eps,\eps)$, $j\in\{1,\dots,k\}$, to select a common sequence of signs $\theta_1,\dots,\theta_n\in\{-1,1\}$ for which all the sums $\sum_{i=1}^n\theta_ia_i^j$ will belong to the interval $(-2^k\eps,2^k\eps)$.
\begin{lemma}\label{lem:Jaani_lemma}
    Let $k\in\N$ and $n>2$. For every $j\in\{1,\dots,k\}$, let $a^j:=(a_i^j)_{i=1}^n\in B_{\ell_\infty^n}$. Then there exists $\theta:=(\theta_i)_{i=1}^n\in\{-1,1\}^n$ such that for every $j\in\{1,\dots,k\}$, we have \begin{equation*}
        \abs{\sum_{i=1}^n\theta_ia^j_i}\leq 2^k.
    \end{equation*}
\end{lemma}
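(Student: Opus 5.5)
The plan is to avoid an inductive sign-choosing scheme and use a linear-algebra rounding argument (in the spirit of Beck--Fiala). This gives the much better bound $k$, and $k\le 2^k$ holds for every $k\in\N$.

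First I would consider the polytope
\begin{equation*}
    P:=\Big\{x\in[-1,1]^n\colon \sum_{i=1}^n x_i a_i^j=0 \text{ for every } j\in\{1,\dots,k\}\Big\}.
\end{equation*}
It is non-empty, since $0\in P$, and it is compact and convex. By the Krein--Milman (or Minkowski) theorem it has an extreme point $x$.

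Next I would show that at most $k$ coordinates of $x$ lie in the open interval $(-1,1)$. Let $F:=\{i\colon \abs{x_i}<1\}$ and suppose $\abs{F}>k$. The linear map $\R^F\to\R^k$ given by $y\mapsto\big(\sum_{i\in F}y_ia_i^j\big)_{j=1}^k$ then has a non-trivial kernel element $y$. Extending $y$ by $0$ outside $F$, the points $x\pm\delta y$ lie in $P$ for all small $\delta>0$. This contradicts the extremality of $x$. This is the only step requiring any care, and it is the standard dimension count.

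Finally I would round. Set $\theta_i:=x_i$ for $i\notin F$, which is already a sign, and $\theta_i:=\sign(x_i)$ for $i\in F$, taking $\theta_i:=1$ if $x_i=0$. Then $\abs{\theta_i-x_i}\le1$ for $i\in F$. Since $\abs{a_i^j}\le1$ and $\sum_i x_ia_i^j=0$, for every $j$ we get
\begin{equation*}
    \abs{\sum_{i=1}^n\theta_ia_i^j}=\abs{\sum_{i\in F}(\theta_i-x_i)a_i^j}\le\abs{F}\le k\le 2^k.
\end{equation*}
The assumption $n>2$ plays no role. I expect no real obstacle beyond stating the extreme-point count cleanly.

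If the authors instead intend an inductive proof matching the $2^k$ bound, the alternative is induction on $k$. One would use the greedy choice of signs for $a^k$ to group the indices into blocks, and then apply the induction hypothesis to the block sums. The rounding argument above is shorter, so it is the route I would take.
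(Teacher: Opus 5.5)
Your proof is correct, and it takes a genuinely different route from the paper's. The paper does not induct on $k$. It fixes $k$ and inducts on $n$. When $n\le 2^k$, every choice of signs works by the triangle inequality. When $n>2^k$, the pigeonhole principle applied to the $2^k$ sign patterns $(\sign(a^j_i))_{j=1}^k$ gives two indices $i_1<i_2$ with the same pattern, so $\abs{a^j_{i_1}-a^j_{i_2}}\le 1$ for every $j$. These two coordinates are merged into the single coordinate $a^j_{i_1}-a^j_{i_2}\in[-1,1]$, the induction hypothesis is applied to the resulting $n-1$ coordinates, and the signs are then unfolded.

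That argument is purely combinatorial and amounts to an explicit cancellation procedure. However, the pigeonhole over $\{-1,1\}^k$ is exactly what makes the constant exponential. Your Beck--Fiala argument replaces the pigeonhole by a dimension count: an extreme point of $[-1,1]^n$ intersected with a subspace of codimension at most $k$ has at most $k$ coordinates in $(-1,1)$. This costs an appeal to Minkowski's theorem and gives the linear bound $k$. Your kernel step and your check of $\abs{\theta_i-x_i}\le 1$, including the case $x_i=0$, are complete.

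For this paper the gain is only quantitative. In the construction of points of continuity near $0$, the threshold $\eps/2^{k+1}$ could be replaced by one of order $\eps/k$. In the remark on $X_{\mathfrak{M}}$, the threshold $\eps/2^{2k+1}$ could be replaced by one of order $\eps/k^2$. No conclusion changes. You are right that the hypothesis $n>2$ plays no role, and the paper's proof does not use it either.

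Your closing inductive sketch is unnecessary, and as written it is incomplete. Block sums of $a^1,\dots,a^{k-1}$ over the blocks produced by a greedy choice for $a^k$ need not lie in $[-1,1]$, so the induction hypothesis would not apply directly.
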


\begin{proof}
        Fix $k\in\N$. We will prove the statement by induction on $n$. First, notice that if $n\leq2^k$, then every sequence of signs gives us what we want simply by the triangle inequality. Next, assume that the statement holds for $n\geq 2^k$, and fix $k$ vectors $a^1,\dots, a^k$ in $B_{\ell_{\infty}^{n+1}}$. Notice that for every $i\in\{1,\dots, n+1\}$, the sequence $(\sign(a_i^j))_{j=1}^k$ defines an element of $\{-1,1\}^k$ (setting $\sign(0):=1$). Since by assumption $n+1>2^k=\abs{\{-1,1\}^k}$, there are at least two such indices which define the same sequence. In other words, we can find $i_1<i_2\in\{1,\dots,n+1\}$ such that $\sign(a^j_{i_1})=\sign(a^j_{i_2})$ for every $j\in\{1,\dots,k\}$. Set $\omega_1:=1$ and $\omega_2:=-1$. Then we have $|\omega_1a^j_{i_1}+\omega_2a^j_{i_2}|\leq 1$ for every $j\in\{1,\dots,k\}$. Now define vectors $b^j=(b^j_i)_{i=1}^{n}\in\ell_{\infty}^n$ by setting 
       \begin{equation*}
        b^j =\big(\omega_1a^j_{i_1}+\omega_2a^j_{i_2},a^j_1,\dots,a^j_{i_1-1},a^j_{i_1+1},\dots,a^j_{i_2-1},a^j_{i_2+1},\dots,a^j_{n+1}\big),\;\; j\in\{1,\dots,k\}.
        \end{equation*}
        By construction, we have $||b^{j}||_{\infty}\leq 1$ for every $j$. So by assumption, there exists $\rho_1,\dots,\rho_n\in\{-1,1\}$ such that for every $j$, we have \begin{equation*}
        \abs{\sum_{i=1}^n\rho_ib^j_i}\leq 2^k.
        \end{equation*}
       Let $\theta=(\rho_2,\dots,\rho_{i_1},\rho_1\omega_1,\rho_{i_1+1},\dots,\rho_{i_2-1},\rho_1\omega_2,\rho_{i_2},\dots,\rho_n)\in\{-1,1\}^{n+1}$. Then, for every $j$, we have \begin{equation*}  
        \abs{\sum_{i=1}^{n+1}\theta_i a^j_i}=\abs{\sum_{i=1}^n\rho_ib^j_i}\leq 2^k.
       \end{equation*} The conclusion follows.
    \end{proof} 

With these tools at hand, we can now prove the main result of this subsection. First, we show that $0$ belongs to the weak closure of the set $\PC(B_X)$. 
    
\begin{proposition}\label{prop:prop:0_in_weak_closure_of_set_of_points_of_continuity}
 Every neighborhood of $0$ in the realtive weak topology of $B_X$ contains a point of continuity of $B_X$.
\end{proposition}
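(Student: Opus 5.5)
We need to show that for every $f_1,\dots,f_m\in X^*$ and every $\eps>0$ there is $x\in\PC(B_X)$ with $\abs{f_j(x)}<\eps$ for all $j$. By Proposition~\ref{prop:points_of_continuity_characterization}, it is enough to build a finitely supported $x\in S_X$ such that $\norm{P_\beta(x)}=1$ for every branch $\beta$. Our plan is to take $x=\sum_s\theta_sc_se_s$. Here the coefficients $c_s\geq 0$ are chosen so that every branch meets the support with total mass exactly $1$. The signs $\theta_s\in\{-1,1\}$ are chosen by Lemma~\ref{lem:Jaani_lemma} to make all the $f_j(x)$ small. The signs do not affect $\norm{P_\beta(x)}$, so any choice of signs keeps $x$ a point of continuity.

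Fix $\delta>0$, to be adjusted at the end, and assume wlog $\norm{f_j}\leq 1$. Apply Lemma~\ref{lem:functionals_have_small_values_on_a_big_part_of_the_tree} to each $f_j$ and merge the outputs. This gives branches $\beta_1,\dots,\beta_k$ and $n\in\N$ such that $\abs{f_j(e_s)}<\delta$ for all $j$ and all $s\in\perp_n\setminus\bigcup_i\beta_i$. Choose $N>n$ so large that the $\beta_i$ have already separated before level $N$. Then choose $M\in\N$ with $M\geq 1/\delta$.

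The support is built as follows.
\begin{itemize}
\item For each node $s\in L_N$ not lying on any $\beta_i$, put $c_s=1$.
\item For each $i$, let $\beta_i(N),\dots,\beta_i(N+M-1)$ be the nodes of $\beta_i$ at levels $N,\dots,N+M-1$, and put $c_{\beta_i(N+r)}=1/M$.
\item For $0\leq r<M$, let $u_{i,r}$ be the child of $\beta_i(N+r)$ not on $\beta_i$, and put $c_{u_{i,r}}=(M-r-1)/M$.
\end{itemize}
We then check the masses. A branch through an unaffected node of $L_N$ gets mass $1$. A branch following $\beta_i$ for $r+1$ segment nodes and then leaving through $u_{i,r}$ gets mass $(r+1)/M+(M-r-1)/M=1$. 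A branch staying on $\beta_i$ through level $N+M-1$ also gets mass $1$. These supports form disjoint blocks over a finite maximal antichain, so every branch meets the support in a chain of total mass exactly $1$. Hence $\norm{x}=1$ and $\norm{P_\beta(x)}=1$ for every branch $\beta$. By the choice of $N$, each $u_{i,r}$ lies in $\perp_n$ and off all the $\beta_l$, so $\abs{f_j(e_{u_{i,r}})}<\delta$.

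Finally, list the support as $s_1,\dots,s_p$ and set $a^j_q:=c_{s_q}f_j(e_{s_q})/\delta$. Off the branches we have $c\leq1$ and $\abs{f_j(e_s)}<\delta$, so $\abs{a^j_q}\leq 1$. On the branch segments we have $c=1/M\leq\delta$ and $\abs{f_j(e_s)}\leq 1$, so again $\abs{a^j_q}\leq 1$. Lemma~\ref{lem:Jaani_lemma}, padding with zero entries if $p\leq2$, yields signs $\theta$ with $\abs{f_j(x)}\leq 2^m\delta$ for all $j$. Taking $\delta=\eps/2^{m+1}$ finishes the proof.

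The main point needing care is the bookkeeping that every branch receives total mass exactly $1$, which is needed for the characterization of points of continuity. This includes checking that the averaging along the exceptional branches $\beta_i$, where $f_j$ need not be small, is compatible with the side nodes carrying the complementary mass.
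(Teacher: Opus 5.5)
Your proof is correct, but it treats the exceptional branches differently from the paper.

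The paper first obtains $\beta_1,\dots,\beta_l$ and $N$ from Lemma~\ref{lem:functionals_have_small_values_on_a_big_part_of_the_tree}. It then uses a compactness argument to find two levels $n_2>n_1>N$ at which $f_j(e_{\beta_i(n_1)})$ and $f_j(e_{\beta_i(n_2)})$ are within $\eps/l$ of each other for all $i,j$. It places $\frac12 e_{\beta_i(n_2)}-\frac12 e_{\beta_i(n_1)}$ on each exceptional branch, so these possibly large terms cancel deterministically. All remaining mass sits on the single level $n_2$: weight $\frac12$ on the level-$n_2$ nodes below $\beta_i(n_1)$ other than $\beta_i(n_2)$, and weight $1$ on the other level-$n_2$ nodes. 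Lemma~\ref{lem:Jaani_lemma} is then applied only to these off-branch nodes, and $\abs{f_j(x)}$ is estimated in two parts.

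You instead dilute the unit mass along $M\geq 1/\delta$ consecutive nodes of each $\beta_i$, so that every term $c_s\abs{f_j(e_s)}$ in the support is at most $\delta$. You compensate with the staircase of side nodes $u_{i,r}$ of mass $(M-r-1)/M$. A single application of Lemma~\ref{lem:Jaani_lemma} to the whole support then finishes.

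Your route dispenses with the compactness step and the split estimate of $f_j(x)$, since the exceptional branches need no special analytic treatment. The cost is a more elaborate support whose branch masses require the staircase bookkeeping. You carry this out correctly, including the observation that pairwise distinct $\beta_i(N)$ keep every $u_{i,r}$ off all the $\beta_l$. The paper's support is simpler, with two nodes per exceptional branch and everything else on one level, so its mass-one check is immediate.
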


\begin{proof}
    Let $f_1,\dots,f_k\in S_{X^*}$, $\varepsilon>0$, and
    \begin{equation*}
        W =\bigcap_{j=1}^k\{ y\in B_X\colon |f_j(y)|<\varepsilon\}.
    \end{equation*}
By Lemma~\ref{lem:functionals_have_small_values_on_a_big_part_of_the_tree}, we can find branches $\beta_1,\dots,\beta_l\in\B$ and $N\in\N$ so that for every $s\in\perp_{N}\setminus\cup_{i=1}^l\beta_i$ and every $j=\{1,\dots,k\}$, we have $|f_j(e_s)|<\frac{\eps}{2^{k+1}}$. Wlog, the points $\beta_1(N),\dots,\beta_l(N)$ are incomparable for $\preceq$. Furthermore, by compactness, we can find $n_2>n_1>N$ in $\N$ such that for every $i,j$, we have \begin{equation*}
    \abs{f_j(e_{\beta_i(n_2)})-f_j(e_{\beta_i(n_1)})}<\varepsilon/l.
\end{equation*} For every $i\in\{1,\dots,l\}$, let \begin{equation*}
    S_i:=L_{n_2}\cap T(\beta_i(n_1))\setminus\{\beta_i(n_2)\}=\{s\in L_{n_2}\setminus\{\beta_i(n_2)\}\colon s\succeq \beta_i(n_1)\} 
\end{equation*} and $S:=\bigcup_{i=1}^lS_i$. Also, let \begin{equation*}
    R:=L_{n_2}\setminus\bigcup_{i=1}^lT(\beta_i(n_1)).
\end{equation*} By assumption, for every $t\in S\cup R$ and $j\in\{1,\dots,k\}$, we have $\abs{f_j(e_t)}<\frac{\eps}{2^{k+1}}$. Therefore, by Lemma~\ref{lem:Jaani_lemma}, we can find $(\theta_t)_{t\in S\cup R}\subseteq\{-1,1\}$ such that for every $j\in\{1,\dots,k\}$, we have \begin{equation*}
    \abs{\sum_{t\in S}\frac{1}{2}\theta_tf_j(e_t)+\sum_{t\in R}\theta_t f_j(e_t)}<\frac{\eps}{2}.
\end{equation*} So consider the point $x$ in $X$ given by \begin{equation*}
    x:=\sum_{i=1}^l\frac{1}{2}(e_{\beta_i(n_2)}-e_{\beta_i(n_1)})+\sum_{t\in S}\frac{1}{2}\theta_te_t+\sum_{t\in R}\theta_t e_t.
\end{equation*} By construction, $x\in B_X$, and for every branch $\beta$ of $T$, we have $\norm{P_\beta(x)}=1$, so it follows from Proposition~\ref{prop:points_of_continuity_characterization} that $x$ is a point of continuity of $B_X$. Furthermore, by the above, we have that for every $j\in \{1,\dots,k\}$, \begin{equation*}
    \abs{f_j(x)}\leq\frac{1}{2}\sum_{i=1}^l\abs{f_j(e_{\beta_i(n_2)})-f_j(e_{\beta_i(n_1)})}+\abs{\sum_{t\in S}\frac{1}{2}\theta_tf_j(e_t)+\sum_{t\in R}\theta_t f_j(e_t)}<\eps.
\end{equation*} So $x\in W$, as we wanted. 
\end{proof}

For every $t\in T$, let $S_t$ be the linear isometry from $X$ onto $[e_s]_{s\in T(t)}$ defined by the formula \begin{equation*}
    S_t(e_s):=e_{t\smallfrown s}
\end{equation*} for every $s\in T$, and extended by linearity and density to the whole space $X$. Using these shift operators and the previous result, we can now prove the following.

\begin{theorem}\label{thm:weak_density_PC(B_X)}
    The set $PC(B_X)$ is weakly dense in $B_X$. In particular, $(B_X,w)$ has a countable $\pi$-base.
\end{theorem}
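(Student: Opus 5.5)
We will show that every non-empty relatively weakly open subset $W$ of $B_X$ contains a point of continuity of $B_X$. By a standard reduction we may work with a basic neighbourhood of a point $x\in B_X$, and by density we may assume that $x$ is finitely supported with $\norm{x}<1$. Indeed, finitely supported elements of norm $<1$ are norm dense in $B_X$, hence weakly dense, and any weakly open set meeting $B_X$ contains such a point. So fix $x\in c_{00}(T)$ with $\norm{x}<1$, together with $f_1,\dots,f_k\in S_{X^*}$ and $\eps>0$, and set
\begin{equation*}
W:=\bigcap_{j=1}^k\{y\in B_X\colon \abs{f_j(y-x)}<\eps\}.
\end{equation*}
Choose $n$ with $\supp(x)\subseteq T_n$.

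For each $s\in L_{n+1}$, the branches through $s$ pass through a chain of $T_n$, and along this chain $x$ has $\ell_1$-mass $m_s:=\sum_{t\prec s}\abs{x(t)}\leq\norm{x}<1$. We then use the shift isometry $S_s$ to graft onto $T(s)$ a rescaled point of continuity. Consider the functionals $f_j\circ S_s$ for $j\in\{1,\dots,k\}$ and $s\in L_{n+1}$; this is a finite family. Applying Proposition~\ref{prop:prop:0_in_weak_closure_of_set_of_points_of_continuity} once for each $s$, we obtain $z_s\in\PC(B_X)$ with
\begin{equation*}
\abs{f_j(S_sz_s)}<\frac{\eps}{2^{n+1}}
\end{equation*}
for all $j$. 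Since $\norm{z_s}=1$, we may take $z_s\in S_X$ with $\norm{P_\beta z_s}=1$ for every branch $\beta$, by Proposition~\ref{prop:points_of_continuity_characterization}. Now define
\begin{equation*}
y:=x+\sum_{s\in L_{n+1}}(1-m_s)\,S_sz_s.
\end{equation*}
Every branch $\beta$ passes through exactly one $s\in L_{n+1}$. Its $\ell_1$-mass under $y$ is therefore $m_s+(1-m_s)\norm{P_{\beta'}z_s}=1$, where $\beta'$ is the shifted branch. Hence $\norm{y}=1$ and $\norm{P_\beta y}=1$ for every branch $\beta$, so $y\in\PC(B_X)$ by Proposition~\ref{prop:points_of_continuity_characterization}. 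Moreover, $\abs{f_j(y-x)}\leq\sum_s(1-m_s)\abs{f_j(S_sz_s)}<\eps$, since $\abs{L_{n+1}}=2^{n+1}$. Thus $y\in W$, which proves weak density.

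The main point to check is the norm computation. The pieces $S_sz_s$ live on the pairwise incomparable sets $T(s)$, so by Lemma~\ref{lem:totally_incomparable_decomposition} they only interact along branches, and each branch meets exactly one of them. This gives exactly the sup over branches computed above. We also use that $z_s$ attains $\norm{P_{\beta'} z_s}=1$ on every branch $\beta'$, including limit branches, which is exactly what Proposition~\ref{prop:points_of_continuity_characterization} provides.

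For the \emph{in particular} part, $X$ is separable, so $\PC(B_X)$ is separable. Fix a norm-dense sequence $(p_m)$ in $\PC(B_X)$, and let $(V_{m,r})$ be relatively weakly open subsets of $B_X$ containing $p_m$ with diameter $<1/r$. Any non-empty relatively weakly open set $W$ contains some point of continuity $p$, and hence a weak neighbourhood of $p$ of the form $B_X\cap(p+U)$, where $U$ is a weak neighbourhood of $0$. The argument is then the standard one: choose a point of continuity $q\in W$, take a small-diameter weak neighbourhood $V\ni q$ with $V\subseteq W$, choose $p_m$ close to $q$ in norm and such that $p_m\in V$ (possible since $V$ contains a norm ball around $q$ intersected with $B_X$), and take $r$ large enough. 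Then $V_{m,r}\cap V$, suitably chosen, is one of countably many sets: intersect with finitely many members of a countable family of weak neighbourhoods determined by a countable norming set for $X$, which exists by separability. This yields a countable $\pi$-base, exactly as in the classical fact recalled in Section~\ref{section:preliminaries}.
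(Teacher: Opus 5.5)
Your weak-density argument is correct and is essentially the paper's proof. The paper takes a net $(x_a)\subseteq\PC(B_X)$ converging weakly to $0$ (Proposition~\ref{prop:prop:0_in_weak_closure_of_set_of_points_of_continuity}) and grafts it below level $n$ as $y+\sum_{t\in L_n}(1-\lambda_t)S_t(x_a)$. It then checks $\norm{P_\beta(\cdot)}=1$ on every branch via Proposition~\ref{prop:points_of_continuity_characterization}, exactly as you do. Working with an explicit basic neighbourhood and a separate $z_s$ for each $s$ (obtained by applying the proposition to the functionals $f_j\circ S_s$) is only a difference in bookkeeping.

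The flaw is in your last paragraph, whose conclusion is not valid as written. The set $V_{m,r}\cap V$ depends on $W$, so it is not one of countably many sets. Weak neighbourhoods built from a countable norming set cannot repair this, because countably many functionals never generate the relative weak topology of $B_X$ here. If they did, $(B_X,w)$ would be metrizable, which fails since $X$ contains $\ell_1$ isometrically along any branch and so $X^*$ is non-separable.

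No intersection is needed. Since $W$ is relatively norm open, there is $\delta>0$ with $B_X\cap\{z\colon \norm{z-q}<\delta\}\subseteq W$. Choose $m$ with $\norm{p_m-q}<\delta/2$ and $r$ with $1/r<\delta/2$. Then the set $V_{m,r}\ni p_m$ of diameter $<1/r$ satisfies $V_{m,r}\subseteq B_X\cap\{z\colon\norm{z-q}<\delta\}\subseteq W$ directly. So $\{V_{m,r}\}_{m,r}$ itself is the countable $\pi$-base. The paper does not argue this at all; it simply appeals to the classical fact recalled in Section~\ref{section:preliminaries}.
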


\begin{proof}
 By Proposition~\ref{prop:prop:0_in_weak_closure_of_set_of_points_of_continuity}, there exists a net $(x_a)$ in $\PC(B_X)$ which converges weakly to 0. So let $y\in B_X$ be a non-zero finitely supported element and let $n\in\N$ be such that such that $\supp(y)\subseteq T_{n-1}$. For every $t\in L_n$, let \begin{equation*}
     \lambda_t:=\sum_{s\preceq t}\abs{y(s)},
 \end{equation*} and define \begin{equation*}
     y_a:=y+\sum_{t\in L_n}(1-\lambda_t)S_t(x_a).
 \end{equation*} for every $a$. By construction, $y_a\in B_X$, and $\norm{P_\beta(y_a)}=\lambda_{\beta(n)}+1-\lambda_{\beta(n)}=1$ for every branch $\beta$ of $T$, so $y_a\in \PC(B_X)$ by Proposition~\ref{prop:points_of_continuity_characterization}. Since the $S_t$ are bounded linear operators, the net $(y_a)$ converges weakly to $y$, so we are done. 
\end{proof}

\begin{remark}
    In \cite[Section~4]{ALMT}, the authors produced another adequate family $\mathfrak{M}$ on the unrooted binary tree $T^*=T\setminus\{\emptyset\}$ by adding to $\A^*:=\{A\cap T^*\colon A\in\A\}$ all the $\lambda$-segments and their subsets, i.e. subsets of $T^*$ of the form $\{t\in A\colon t\preceq s\}\cup\{s\smallfrown 0,s\smallfrown 1\}$ where $s$ is an arbitrary point of $T$ and $A\in\A^*$. By this process, they not only removed all the strongly exposed points from the unit ball of $X$, but actually obtained a Banach space $X_\mathfrak{M}$ with some remarkable geometric properties: this space has a 1-unconditional basis, the set $D$ of all Daugavet points of $X_\mathfrak{M}$ is weakly dense in $B_{X_\mathfrak{M}}$ (in particular, $X_\mathfrak{M}$ has the slice diameter 2 property), and the set $D_B:=D\cap \PC(B_{X_\mathfrak{M}})$ is generating for $B_{X_\mathfrak{M}}$ (in particular, $X_\mathfrak{M}$ fails the diameter 2 property). We would like to point out that the proof we presented in this section can easily be adapted to this setting to show that actually the set $D_B$ is also weakly dense in $B_{X_\mathfrak{M}}$ (and consequently, $(B_{X_\mathfrak{M}},w)$ also has a countable $\pi$-base). Indeed, in order to prove that $\PC(B_{X_\mathfrak{M}})$ is weakly dense in $B_{X_\mathfrak{M}}$, then the only part in the above scheme that requires non-trivial adjustments is the proof of Proposition~\ref{prop:prop:0_in_weak_closure_of_set_of_points_of_continuity}, because the elements we construct there are not in the unit ball of $X_\mathfrak{M}$. This can be fixed by initially asking \begin{equation*}
        \abs{f_j(e_s)}<\frac{\eps}{2^{2k+1}}
    \end{equation*} for every $s\in\perp_{N}\setminus\cup_{i=1}^l\beta_i$ and every $j=\{1,\dots,k\}$, and then by replacing for every $t\in S\cup R$ the point $e_t$ which appears in the definition of the point $x$ there by the point  \begin{equation*}
        y_t:=\sum_{s\in T^*}\frac{1}{2^{\abs{s}}}\omega_{t,s}e_{t\smallfrown s}, 
    \end{equation*} where the $\omega_{t,s}$ are signs which are obtained level by level from Lemma~\ref{lem:Jaani_lemma} in such a way that \begin{equation*}
        \abs{f_j(y_t)}\leq \sum_{n\in\N}\frac{1}{2^n}\abs{\sum_{\abs{s}=n}\omega_{t,s}f_j(e_{t\smallfrown s})}\leq 2^k\frac{\eps}{2^{2k+1}}\sum_{n\in\N}2^{-n}=\frac{\eps}{2^{k+1}}
    \end{equation*} for every $j\in\{1,\dots,k\}$. Then we chose the sequence $(\theta_t)_{t\in S\cup R}$ by calling a second time to Lemma~\ref{lem:Jaani_lemma} to get \begin{equation*}
        \abs{\sum_{t\in S}\frac{1}{2}\theta_tf_j(y_t)+\sum_{t\in R}\theta_tf_j(y_t)}<2^k\frac{\eps}{2^{k+1}}=\frac{\eps}{2}. 
    \end{equation*} We leave the details to the reader. To conclude, it remains to see that actually $D_B=\PC(B_{X_\mathfrak{M}})$. This is easily seen by combining several of the observations already made in \cite{ALMT}, but let us point out that this actually also immediately follows from the weak density and norm closedness of the set $D$. Indeed, if a point $x\in \PC(B_X)$ belongs to the weak closure of some subset $A$ of $B_X$, then it actually belongs to the norm closure of $A$, so we get the desired result. 
\end{remark}

\section{The countably branching tree space}\label{section:countably_branching_tree_space}

It is clear that the results from the previous section generalize to any infinite finitely branching tree (i.e. a tree such that every element of it has only finite amount of immediate successors). However, the situation is different when one allows infinite branching, because one then loses the weak density of the points of continuity in the unit ball of the associated Banach space. In particular, let
 $T_{\infty}=\bigcup_{k\in\N}\N^k$ be the countably branching tree and $\A_\infty$ be the collection of all chains of $T_{\infty}$. In this case, the Banach space $X_{T_\infty}$ generated by the family $\A_\infty$ has the strong diameter 2 property, meaning that all convex combination of slices (hence, by Bourgain lemma, all non-empty relatively weakly open subsets) of $B_{X_{T_\infty}}$ have diameter 2. In particular, $B_{X_{T_\infty}}$ has no point of continuity. Nevertheless, we will prove in this section that $(B_{X_{T_\infty}},w)$ has a countable $\pi$-base. The key idea for this is the following lemma, which shows that every functional on $X_\infty$ is essentially supported on a finitely branching tree.

\begin{lemma}\label{lemma:functinals_on_X_infty_are_essentially_supported_on_a_finitely_branching_tree}
    Let $f\in X_{T_\infty}^*$ and $\varepsilon>0$. There exists a finitely branching tree $T_0\subseteq T_{\infty}$ such that $||P^{*}_{T_{\infty}\setminus T_0}f||<\varepsilon$.
\end{lemma}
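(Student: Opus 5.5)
The plan is to use the $M$-decomposition from Lemma~\ref{lem:totally_incomparable_decomposition}, which holds verbatim in $X_{T_\infty}$. Dually, it says that the norm of a functional restricted to a $c_0$-sum of pieces is the $\ell_1$-sum of the norms of its restrictions to the pieces. Throughout, for $S\subseteq T_\infty$ we have $\norm{P^*_Sf}=\sup\{f(x)\colon x\in B_{X_{T_\infty}},\ \supp(x)\subseteq S\}$. We adjoin the root $\emptyset$ for convenience, so that the first-level nodes $n$ are the children $\emptyset\smallfrown n$.

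\textbf{Step 1 (choice of truncation levels).} Enumerate $T_\infty\cup\{\emptyset\}$ as $(s_m)_{m\in\N}$ and set $\eps_m:=\eps 2^{-m-1}$. For a fixed node $s$, the sets $T(s\smallfrown n)$, $n\in\N$, are pairwise totally incomparable. Hence, by Lemma~\ref{lem:totally_incomparable_decomposition},
\begin{equation*}
    [e_t]_{t\in\bigcup_n T(s\smallfrown n)}=\Big(\sum_{n}[e_t]_{t\in T(s\smallfrown n)}\Big)_{c_0}.
\end{equation*}
It follows that $\sum_{n\in\N}\norm{P^*_{T(s\smallfrown n)}f}\leq\norm{f}<\infty$. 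Indeed, testing $f$ on $\sum_{n\leq N}x_n$ with $x_n\in B_{X_{T_\infty}}$ supported on $T(s\smallfrown n)$ and $f(x_n)$ almost equal to $\norm{P^*_{T(s\smallfrown n)}f}$ gives an element of norm at most one. So for each $m$ we can choose $N_{s_m}\in\N$ with
\begin{equation*}
    \sum_{n>N_{s_m}}\norm{P^*_{T(s_m\smallfrown n)}f}<\eps_m .
\end{equation*}

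\textbf{Step 2 (the subtree).} Let $T_0$ consist of all $t\in T_\infty$ such that $t(k)\leq N_{t_{\lvert k-1}}$ for every $1\leq k\leq\abs{t}$. This set is closed under taking initial segments, and every node of $T_0$ has at most $N_t$ immediate successors in $T_0$. So $T_0$ is a finitely branching subtree.

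\textbf{Step 3 (the complement).} Any $t\in T_\infty\setminus T_0$ has a smallest $k$ violating the defining condition. Setting $s:=t_{\lvert k-1}$, we get $s\in T_0\cup\{\emptyset\}$ and $t\in T(s\smallfrown n)$ with $n>N_s$. Hence
\begin{equation*}
    T_\infty\setminus T_0=\bigcup_{s\in T_0\cup\{\emptyset\}}\ \bigcup_{n>N_s}T(s\smallfrown n).
\end{equation*}

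\textbf{Step 4 (the key check).} The main point is to show that the roots $s\smallfrown n$ ($s\in T_0\cup\{\emptyset\}$, $n>N_s$) form an antichain, so that all these cones are pairwise totally incomparable. Suppose $s\smallfrown n\prec s'\smallfrown n'$ are two such roots. Then $s'\succeq s\smallfrown n\notin T_0$. But $s'\in T_0\cup\{\emptyset\}$ and $T_0$ is closed under initial segments, so $s'$ cannot lie above a node outside $T_0$, which is a contradiction. Two roots with the same $s$ and different $n$ are clearly incomparable.

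\textbf{Step 5 (conclusion).} Applying Lemma~\ref{lem:totally_incomparable_decomposition} once more, $[e_t]_{t\in T_\infty\setminus T_0}$ is the $c_0$-sum of the spaces $[e_t]_{t\in T(s\smallfrown n)}$. Therefore
\begin{equation*}
    \norm{P^*_{T_\infty\setminus T_0}f}=\sum_{s}\sum_{n>N_s}\norm{P^*_{T(s\smallfrown n)}f}<\sum_m\eps_m<\eps .
\end{equation*}
Here the equality uses that the dual of a $c_0$-sum is the $\ell_1$-sum of the duals. I expect the only delicate points to be the antichain verification in Step 4 and this $\ell_1$-duality; both are routine once stated.
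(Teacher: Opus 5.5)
Your proof is correct and is essentially the paper's argument. In both, one keeps only finitely many children below each retained node, so that the discarded cones carry summable $f$-mass. One then uses that the complement of $T_0$ is a union of pairwise totally incomparable cones, so $\norm{P^*_{T_\infty\setminus T_0}f}$ is the $\ell_1$-sum of the norms of $f$ on those cones.

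The only difference is bookkeeping. You fix a threshold $N_s$ for every node in advance, via an enumeration with errors $\eps 2^{-m-1}$. The paper instead selects finite sets $R_k\subseteq L_k$ inductively, level by level, with total error $\eps/2^k$ at level $k$.
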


\begin{proof}
 Fix a non-zero $f\in X_{T_\infty}^*$ and $\varepsilon>0$. For convenience, given two subsets $A,B$ of $T_\infty$, we will write $A\preceq B$ if every element of $B$ is a successor of some element of $A$.  First, we construct inductively a sequence $(R_k)_{k\in \N\cup\{0\}}$ of finite subsets of $T_\infty\cup\{\emptyset\}$ with $R_0=\{\emptyset\}$ and such that for every $k\in\mathbb{N}$, the following conditions are met.
    \begin{enumerate}
        \item $R_k\subseteq L_k$;
        \item $R_k\succeq R_{k-1}$;
        \item $||P^*_{S_k}f||<\varepsilon/2^k$, where $S_k = \bigcup_{s\in L_k\setminus R_k}T(s)$.
    \end{enumerate} Indeed, let $k=0$ or assume that for some $k\in\N$ we have found $R_k$ satisfying the conditions (1), (2) and (3). Observe that \begin{equation*}
        ||P^*_{\bigcup_{\substack{s\in L_{k+1}\\
    s\succeq R_k}}T(s)}f|| = \sum_{\substack{s\in L_{k+1}\\ s\succeq R_k}} ||P^*_{T(s)}f||,
    \end{equation*}
    so we can find a finite subset $R_{k+1}\succeq R_k$ of $L_{k+1}$ such that\begin{equation*}
        \sum_{s\in R_{k+1}}||P^*_{T(s)}f|| >  ||P^*_{\bigcup_{\substack{s\in L_{k+1}\\ s \succeq R_{k}}}T(s)}f||- \frac{\varepsilon}{2^{k+1}}.
    \end{equation*}
    In particular, \begin{equation*}
        ||P^*_{S_{k+1}}f|| = ||P^*_{\bigcup_{s\in L_{k+1}\setminus R_{k+1}}T(s)}f|| = \sum_{s\in L_{k+1}\setminus R_{k+1}}||P^*_{T(s)}f||<\frac{\varepsilon}{2^{k+1}},
    \end{equation*} so $R_{k+1}$ satisfies all the required conditions. Finally, let $T_0:= \bigcup_{k\in\mathbb{N}} R_k$. Then $T_0$ is a finitely branching subtree of $T_\infty$, and 
\begin{equation*}
    T_{\infty}\setminus T_0 = \bigcup_{k\in\mathbb{N}}\bigcup_{s\in L_k\setminus R_k} T(s) = \bigcup_{k\in\mathbb{N}}S_k.
\end{equation*} Since the $S_k$ are totally incomparable, it follows that \begin{equation*}
     || P^*_{T_{\infty}\setminus T_0}f|| = ||P^*_{\bigcup_{k\in\mathbb{N}}S_k}f|| = \sum_{k\in\mathbb{N}}||P^*_{S_k}f|| < \sum_{k\in\mathbb{N}}\frac{\varepsilon}{2^k}=\varepsilon,
\end{equation*} as we wanted.
\end{proof}

With this result at hand, we can prove the following. 

\begin{theorem}\label{thm:countably_branching_tree_has_a_countable_pi-base}
The set $(B_{X_{T_\infty}},w)$ has a countable $\pi$-base.
\end{theorem}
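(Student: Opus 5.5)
The plan is to exhibit an explicit countable family of relatively weakly open sets defined by finitely many coordinates, and to show that it is a $\pi$-base for $(B_{X_{T_\infty}},w)$. Write $X:=X_{T_\infty}$, and keep the notation $L_n$, $T_n$, $\perp_n$, $T(s)$ and $P_S$ from Section~\ref{section:binary_tree_space}, now for $T_\infty$. For a finite set $F\subseteq T_\infty$, a finitely supported $q\in B_X$ with rational coordinates and $\supp(q)\subseteq F$, and a rational $\delta>0$, set
\begin{equation*}
V_{F,q,\delta}:=\{x\in B_X\colon \abs{x(s)-q(s)}<\delta \text{ for every } s\in F\}.
\end{equation*}
Each such set is relatively weakly open, contains $q$, and there are only countably many of them. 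It remains to show that every non-empty relatively weakly open subset $W$ of $B_X$ contains one of them. We may shrink $W$ to a set of the form $W=\bigcap_{j=1}^k\{x\in B_X\colon \abs{f_j(x-y)}<\eps\}$, with $f_j\in S_{X^*}$ and $y$ finitely supported.

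First I reduce to a finitely branching subtree. By Lemma~\ref{lemma:functinals_on_X_infty_are_essentially_supported_on_a_finitely_branching_tree}, applied to each $f_j$ and then taking the union of the resulting finitely many trees, I get a finitely branching tree $T_0$. I enlarge it so that it contains $\supp(y)$ and is downward closed; it stays finitely branching. It satisfies $\norm{P^*_{T_\infty\setminus T_0}f_j}<\eps/8$ for every $j$. The space $X_{T_0}=[e_t]_{t\in T_0}$ is $1$-complemented, and it is the tree space of a finitely branching tree. The paper notes that the results of Section~\ref{section:binary_tree_space} carry over to such trees. So Proposition~\ref{prop:points_of_continuity_characterization} and Theorem~\ref{thm:weak_density_PC(B_X)} hold in $B_{X_{T_0}}$. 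This gives a point $z\in B_{X_{T_0}}$ with $\norm{P_\beta(z)}=1$ for every branch $\beta$ of $T_0$ and $\abs{f_j(z-y)}<\eps/8$ for every $j$. Next I choose $n$ so large that $\norm{P_{T_0\cap\perp_n}(z)}<\eta$, where $\eta$ is small. I then let $q$ be a rational approximation of $P_{T_0\cap T_n}(z)$ lying in $B_X$, and set $F:=T_0\cap T_n$. This set is finite because $T_0$ is finitely branching. For every $s\in F\cap L_n$ we then get $\sum_{t\preceq s}\abs{q(t)}>1-2\eta$.

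The key step is to show $V_{F,q,\delta}\subseteq W$ for $\delta$ small compared with $\eta/\abs{F}$. Take $x\in V_{F,q,\delta}$ and split
\begin{equation*}
x=P_F x+P_{T_0\cap\perp_n}x+P_{T_\infty\setminus T_0}x.
\end{equation*}
The first term is within $\abs{F}\delta$ of $q$ in norm, hence within about $\eta$ of $z$ after the $f_j$ are applied. The third term contributes at most $\eps/8$ to each $f_j$, by the choice of $T_0$. The second term is the one requiring an argument. Since $T_0$ is downward closed, we have $T_0\cap\perp_n=\bigcup_{s\in F\cap L_n}(T_0\cap T(s))\setminus\{s\}$, and these pieces are pairwise totally incomparable. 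By Lemma~\ref{lem:totally_incomparable_decomposition}, its norm is therefore the maximum over $s\in F\cap L_n$ of the norms of the pieces. For each such $s$, every chain in the piece over $s$ can be prolonged by $\{t\preceq s\}$. Since $\norm{x}\leq1$ and $\sum_{t\preceq s}\abs{x(t)}>1-2\eta-\abs{F}\delta$, each piece has norm at most $2\eta+\abs{F}\delta$. Putting the three estimates together gives $\abs{f_j(x-y)}<\eps$ once $\eta$ and $\delta$ are small enough, so $x\in W$.

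I expect the main obstacle to be the reduction step. It must be checked that the weak density of points of continuity, proved above for the binary tree, really does hold for an arbitrary finitely branching tree $T_0$ that may have leaves or irregular branching. Lemma~\ref{lem:Jaani_lemma} and the level-by-level construction of Proposition~\ref{prop:prop:0_in_weak_closure_of_set_of_points_of_continuity} need care there. If that step becomes awkward, I would skip points of continuity altogether. Instead I would directly approximate $y$ by a finitely supported element $z$ of $B_{X_{T_0}}$ having full mass along every branch up to some finite level. Such a $z$ would be built as in the proof of Theorem~\ref{thm:weak_density_PC(B_X)}: fill each branch of $T_0$ below level $n$ with the missing mass using sign-balanced vectors whose $f_j$-values are small. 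That is exactly the only property of $z$ used in the key step.
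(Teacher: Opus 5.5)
Your proposal is correct and follows the paper's proof. You reduce to a finitely branching subtree $T_0$ via Lemma~\ref{lemma:functinals_on_X_infty_are_essentially_supported_on_a_finitely_branching_tree}, use the finitely branching version of Section~\ref{section:binary_tree_space} to pick a point of $B_{X_{T_0}}$ with full mass on every branch that is weakly close to $y$, and show that a small coordinate neighbourhood of it lies in $W$, since everything off $T_0$ is negligible for the $f_j$. Your rational indexing, truncation at level $n$ with $F=T_0\cap T_n$, and direct branch-mass bound on the middle term just make explicit the paper's claim that ``the diameter of $W(x_0,\delta)\cap B_{X_0}$ goes to $0$''; the leaf issue you flag is glossed over in the paper too, and is harmlessly removed by enlarging $T_0$ to a leafless, downward closed, finitely branching tree, which only decreases $\norm{P^*_{T_\infty\setminus T_0}f_j}$.
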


\begin{proof}
We will show that every non-empty relatively weakly open subset $W$ of $(B_{X_{T_\infty}},w)$ contains a subset of the form
\begin{equation*}
    W(x_0,\delta_0):=\bigcap_{s\in\supp(x_0)}\Big\{y\in B_{X_{T_\infty}}\colon \abs{x_0(s)-y(s)}<\delta_0\Big\},
\end{equation*}
where $x_0\in X_{T_\infty}$ is a finitely supported element and $\delta>0$.  Take $W$ of the form
\begin{equation*}
    W:=\bigcap_{i=1}^k\{ y\in B_X \colon |f_i(x-y)|<\varepsilon\},
\end{equation*}
where $x\in X_{T_\infty}$ is a finitely supported element, $f_1,\dots,f_k\in S_{X^*}$ and $\varepsilon>0$. By the previous lemma, we can find a finitely branching subtree $T_0$ of $T_\infty$ such that $||P^*_{T_{\infty}\setminus T_0}f_i||<\frac{\eps}{4}$ for every $i\in\{1,\dots,k\}$.  Wlog, $\supp(x)\subseteq T_0$. Let $X_0:=[e_s]_{s\in T_0}$ and 
\begin{equation*}
    W_0 =\bigcap_{i=1}^k\Big\{z\in B_{X_0}\colon |f_i(x-z)|<\frac{\varepsilon}{4}\Big\}.
\end{equation*}
As already mentioned, we can use the scheme from the previous section with the finitely branching tree $T_0$ to show that $W_0$ contains a finitely supported point of continuity $x_0$ of $B_{X_0}$. A quick look at the proof of Proposition~\ref{prop:points_of_continuity_characterization} will convince the reader that in this case, the diameter of the set $W(x_0,\delta)\cap B_{X_0}$ goes to 0 as $\delta$ goes to 0. Pick $\delta_0>0$ such that $\diam (W(x_0,\delta_0)\cap B_{X_0})<\varepsilon/4$ and pick $y\in W(x_0,\delta_0)$. Note that since $\supp(x_0)\subseteq T_0$, we have that $P_{T_0}y\in W(x_0,\delta_0),$ and thus $\norm{x_0-P_{T_0}y}<\frac{\eps}{4}$. Hence, for every $i\in\{1,\dots,k\}$, we have
\begin{align*}
    |f_i(x-y)| &\leq |f_i(P_{T_0}(x-y))| + |f_i(P_{T_{\infty}\setminus T_0}(x-y))| \\
    &\leq |f_i(x-x_0)|+|f_i(x_0-P_{T_0}y)|+2||P^*_{T_{\infty}\setminus T_0}f_i||\\
    & < \frac{\varepsilon}{4}+\frac{\varepsilon}{4}\norm{f_i}+2\frac{\varepsilon}{4} = \varepsilon.
\end{align*} Therefore $W(x_0,\delta_0)\subseteq W$. The conclusion follows.
\end{proof}

\section{Additional comments and open questions}\label{section:open_questions}
As previously mentioned, it is currently unknown whether or not every convex SCD set admits a countable $\pi$-base for the relative weak topology. In particular, the following also seems to be open.

\begin{question}
    Let $X$ be a Banach space with a 1-unconditional basis. Does $B_X$ admit a countable $\pi$-base in the relative weak topology?
\end{question}

One natural candidate in regard to this latter question is the stopping time Banach space that we already mentioned in the introduction, and which can be seen as the Banach space generated by the family of all antichains in the binary tree (see \cite{BO89} for more details). This space also has a 1-unconditional basis, but the reader can easily check that its unit ball contains no points of continuity. We do not know whether or not the unit ball of this space admits a countable $\pi$-base for the relative weak topology.

One of the main difficulties when facing this sort of problems is that there are very few tools at our disposal to show the non-existence of a $\pi$-base. In Section~\ref{section:preliminaries}, we have seen that the super alternative Daugavet property naturally prevents the existence of a countable $\pi$-base for the relative weak topology of the unit ball of the underlying Banach space. So far, there are few known examples of Banach spaces which satisfy this property while failing the Daugavet property. In particular, the following is currently unknown (see \cite[Section~6]{LLMRZP25} for more details), and would be another very natural approach to the above question.

\begin{question}
    Does there exist a Banach space $X$ with the super alternative Daugavet property whose unit ball is an SCD set? In particular, does there exist a Banach space with a 1-unconditional basis with the super alternative Daugavet property?
\end{question}

A direct consequence of our results is that both the binary tree and the countably branching tree spaces fail to have the super alternative Daugavet property. We would like to point out that this is actually also the case for every Banach space $X$ generated by an adequate family. Recall that a collection $\A$ of subsets of $\N$ is called an \emph{adequate family} if \begin{enumerate}
    \item $\A$ contains all the singletons: for every $n\in \N$, $\{n\}\in\A$;
    \item $\A$ is hereditary: if $A\in\A$ and $B\subseteq A$, then $B\in\A$;
    \item $\A$ is compact with respect to the topology of pointwise convergence: for every $A\subseteq\N$, if $B\in\A$ for every finite $B\subseteq A$, then $A\in\A$. 
\end{enumerate} Given such a family, we consider the Banach space $h_{\A,1}$ obtained as the completion of the space $(c_{00},\norm{\cdot})$, where $\norm{\cdot}$ is the norm on $c_{00}$ given by \begin{equation*}
    \norm{\sum_{n\in\N}a_ne_n}=\sup_{A\in\A} \sum_{n\in A}\abs{a_n}
\end{equation*}for every $(a_n)\in c_{00}$. Then the canonical basis $(e_n)$ of $c_{00}$ form a 1-unconditional basis for $h_{\A,1}$. Spaces generated by adequate families include e.g. $c_0$, $\ell_1$, $c_0(\ell_1)$ or the Schreier space, as well as the tree-spaces considered in the previous two sections. We refer to \cite{ABC21,ALM} and references therein for more information on these spaces. With respect to the latter question, in this setting, we can provide a negative answer. 

\begin{proposition}
 Let $\A$ be an adequate family of subsets of $\N$. Then the space $h_{\A,1}$ fails the super Alternative Daugavet property.
\end{proposition}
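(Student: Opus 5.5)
The plan is to exhibit a single $x\in S_{h_{\A,1}}$ and a single non-empty relatively weakly open subset $W$ of $B_{h_{\A,1}}$ such that
\begin{equation*}
\sup_{y\in W}\max_{\theta\in\{-1,1\}}\norm{x+\theta y}\leq \tfrac32+c\delta<2
\end{equation*}
for a suitable small $\delta>0$ and an absolute constant $c$. Every $W$ will be defined by finitely many coordinate functionals $e_n^*$, so it is weakly open. I split into two cases according to whether $\A$ contains a set of cardinality $2$.

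\emph{Case 1: $\A$ contains only the empty set and the singletons.} Then $h_{\A,1}=c_0$ isometrically. I take $x=e_1$ and $W=\{y\in B_{c_0}\colon \abs{y(1)}<\delta\}$, which contains $0$. For $y\in W$ and $\theta=\pm1$, every coordinate of $x+\theta y$ is bounded by $1+\delta$, so $\norm{x+\theta y}\leq 1+\delta$.

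\emph{Case 2: some pair belongs to $\A$.} After relabelling, $\{1,2\}\in\A$. I take $y_0=\frac12(e_1+e_2)$ and $x=\frac12(e_1-e_2)\cdot 2\norm{e_1-e_2}^{-1}$. Since $\{1,2\}\in\A$, this gives $\norm{e_1-e_2}=2$, so $x=\frac12(e_1-e_2)$ and $\norm{x}=\norm{y_0}=1$. I set
\begin{equation*}
W:=\{y\in B_{h_{\A,1}}\colon \abs{y(1)-\tfrac12}<\delta,\ \abs{y(2)-\tfrac12}<\delta\},
\end{equation*}
which contains $y_0$. Fix $y\in W$, $\theta\in\{-1,1\}$ and $B\in\A$. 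I estimate $\sum_{n\in B}\abs{x(n)+\theta y(n)}$ according to $B\cap\{1,2\}$, always using $\sum_{n\in B}\abs{y(n)}\leq 1$.

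If $B\cap\{1,2\}=\emptyset$, the sum equals $\sum_B\abs{y(n)}\leq1$. If $B$ contains exactly one of $1,2$, say $1$, the sum is at most $\abs{\frac12+\theta y(1)}+\big(1-\abs{y(1)}\big)\leq 1+\delta+\frac12+\delta$. If $B\supseteq\{1,2\}$, the coordinates $1,2$ contribute $\abs{\frac12+\theta y(1)}+\abs{-\frac12+\theta y(2)}\leq 1+2\delta$, because for either sign one of the two terms is about $1$ and the other about $0$. The remaining coordinates contribute at most $1-y(1)-y(2)<2\delta$. Taking the supremum over $B$ gives $\norm{x+\theta y}\leq\frac32+2\delta$ for every $y\in W$, which is strictly less than $2$ for small $\delta$.

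The step that needs most care is the case $B\supseteq\{1,2\}$. It is exactly where the choice of $y_0$ as the average $\frac12(e_1+e_2)$, rather than an extreme point, is essential: it forces $y$ to carry almost no mass on the rest of any admissible set containing both coordinates. It is also where the signs of $x$ must be opposite on $1$ and $2$ so that both $\theta=\pm1$ produce cancellation.
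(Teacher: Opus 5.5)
Your proof is correct and is essentially the paper's argument. The paper takes $x=\frac12(e_m+e_n)$ and a coordinate neighbourhood of $\frac12(e_m-e_n)$, which is your configuration up to the sign-flip isometry $e_n\mapsto -e_n$, and runs the same four-case estimate according to $B\cap\{m,n\}$; the only difference is that you settle the $c_0$ case with a direct one-line estimate, whereas the paper cites a known result from the literature.
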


\begin{proof}
    Let $X:=h_{\A,1}$. If $\A$ contains only the singletons, then $X$ is isometrically isomorphic to the space $c_0$, and therefore fails to have the super ADP (this follows e.g. from \cite[Theorem~3.10]{LLMRZP25}). Otherwise, note that we can find $m<n$ in $\N$ such that $\{m,n\}\in\A$. We will show that the point $x:=\frac{1}{2}(e_m+e_n)$ is not a super alternative Daugavet point in the sense of \cite{LLMRZP25}. Indeed, let $y:=\frac{1}{2}(e_m-e_n)$ and let $(y_\alpha)$ be a net in $B_X$ which converges weakly to $y$. Then fix $\eps>0$ to be chosen later. By assumption, we can find $\alpha_0$ such that for every $\alpha\succeq\alpha_0$, we have \begin{equation*}
        \abs{y_\alpha(m)-\frac{1}{2}}<\eps\quad\text{and}\quad\abs{y_\alpha(n)+\frac{1}{2}}<\eps.
    \end{equation*} Take $\alpha\succeq \alpha_0$, $\theta\in\T$ and $A\in\A$. There are four cases to consider.

    \textbf{Case~1.} Assume that $m,n\in A$. On one hand, since $A\in\A$, we have \begin{equation*}
        \sum_{k\in A}\abs{y_\alpha(k)}\leq \norm{y_\alpha}\leq 1.
    \end{equation*} On the other, since $\alpha\succeq\alpha_0$, we have \begin{equation*}
        \sum_{k\in A}\abs{y_\alpha(k)}>1-2\eps+\sum_{k\in A\setminus\{m,n\}}\abs{y_\alpha(k)}.
    \end{equation*} Hence, \begin{equation*}
        \sum_{k\in A\setminus\{m,n\}}\abs{y_\alpha(k)}<2\eps,
    \end{equation*} and it follows that \begin{equation*}
        \sum_{k\in A}\abs{x(k)+\theta y_\alpha(k)}<\frac{1}{2}(\abs{1+\theta}+\abs{1-\theta})+4\eps. 
    \end{equation*}

    \textbf{Case~2 and 3.} Assume that $m\in A$ and $n\notin A$. Again, \begin{equation*}
        \sum_{k\in A}\abs{y_\alpha(k)}>\frac{1}{2}-\eps+\sum_{k\in A\setminus\{m\}}\abs{y_\alpha(k)},
    \end{equation*} and thus \begin{equation*}
        \sum_{k\in A\setminus\{m\}}\abs{y_\alpha(k)}<\frac{1}{2}+\eps.
    \end{equation*} It follows that \begin{equation*}
        \sum_{k\in A}\abs{x(k)+\theta y_\alpha(k)}<\frac{1}{2}\abs{1+\theta}+\frac{1}{2}+2\eps\leq \frac{3}{2}+2\eps. 
    \end{equation*} The case $n\in A$ and $m\notin A$ is analogous.

    \textbf{Case~4.} Assume that $m,n\notin A$. Then \begin{equation*}
        \sum_{k\in A}\abs{x(k)+\theta y_\alpha(k)}=\sum_{k\in A}\abs{y_\alpha(k)}\leq \norm{y_\alpha}\leq 1.
    \end{equation*}

    Summing up, we have shown that \begin{equation*}
        \norm{x+\theta y_\alpha}\leq \max\{\frac{3}{2}+2\eps,\frac{1}{2}(\abs{1+\theta}+\abs{1-\theta})+4\eps\}\leq \max\{\frac{3}{2}+2\eps,\sqrt{2}+4\eps\}.
    \end{equation*} So if $\eps$ was initially chosen small enough, we would get that this quantity is strictly smaller than 2, and therefore $x$ is not a super alternative Daugavet point, as claimed. 
\end{proof}

In Subsection~\ref{subsec:binary_tree_renorming}, we produced an example of a Banach space with a 2-unconditional basis $Y$ whose unit ball fails to be SCD. To the best of our knowledge, the following is unknown. 

\begin{question}
    Can we produce a Banach space with a $k$-unconditional basis for $k\in(1,2)$ whose unit ball is not SCD?
\end{question}

Also, observe that the unit ball of the latter space $Y$ does contain many SCD points (since $\cconv(\Pi^+\cup\Pi^-)\subseteq \SCD(B_Y)$). So it is natural to ask whether it is possible to do a little bit better. 

\begin{question}
    Does there exist a Banach space with an unconditional basis whose unit ball has no SCD point (equivalently, such that 0 is not an SCD point in this set)? 
\end{question}

The alert reader might wonder what would happen if we were simply to remove the strongly exposed points from the above set, or in other words to consider the symmetrization $D$ of the set $\Omega^+=\Sigma^+\setminus \Pi^+$ instead of $C$. It turns out that 0 is still SCD in this new unit ball. 

\begin{proposition}
    Let $D:=\cconv(\Omega^+\cup\Omega^-)\subseteq B_{X_T}$. Then $0\in\SCD(D)$. 

\end{proposition}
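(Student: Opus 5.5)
The plan is to exhibit an explicit countable family of slices of $D$, arranged in pairs of opposite signs. In each pair, the two chosen points cancel each other up to a small error, except on a small subtree. The subtrees for different pairs will be pairwise totally incomparable, so that Lemma~\ref{lem:totally_incomparable_decomposition} ($c_0$-decomposition) forces long averages of the leftovers to be small. Since $D\subseteq B_X$, every $x\in D$ satisfies $\sum_{t\in\beta}\abs{x(t)}\leq 1$ for every branch $\beta$; this is the only information about $D$ that will be used, apart from the non-emptiness of the slices.

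\textbf{The slices.} For every $k\in\N$, let $u_k:=(0,\dots,0,1)\in L_k$ (with $k-1$ zeros) and $\alpha_k:=L_k\setminus\{u_k\}$. Then $\alpha_k$ is a finite antichain which is not maximal. Hence $\mathds{1}_{\alpha_k}\in\Omega^+$ and $-\mathds{1}_{\alpha_k}\in\Omega^-$. Set $f_k:=\sum_{s\in\alpha_k}e_s^*$. Then $\sup f_k(D)=\abs{\alpha_k}$, and it is attained at $\mathds{1}_{\alpha_k}$, while $\sup(-f_k)(D)=\abs{\alpha_k}$ is attained at $-\mathds{1}_{\alpha_k}$. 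For $\delta>0$ consider the slices $S^{\pm}_{k,\delta}:=S(D,\pm f_k,\delta)$. The countable family will be $\{S^+_{k,1/m},S^-_{k,1/m}\colon k,m\in\N\}$.

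\textbf{Localization.} Let $U_k:=T\setminus T(u_k)$; this is exactly the union of the branches of $T$ passing through $\alpha_k$. Take $x\in S^+_{k,\delta}$. Then $x(s)>1-\delta$ for every $s\in\alpha_k$, because each $x(s)\leq 1$ and the total deficit in $f_k(x)$ is less than $\delta$. Every chain contained in $U_k$ lies in some branch through some $s\in\alpha_k$, and on that branch the total mass of $x$ outside $s$ is at most $1-x(s)<\delta$. Hence
\begin{equation*}
\norm{P_{U_k}(x)-\mathds{1}_{\alpha_k}}\leq 2\delta .
\end{equation*}
Symmetrically, every $y\in S^-_{k,\delta}$ satisfies $\norm{P_{U_k}(y)+\mathds{1}_{\alpha_k}}\leq 2\delta$. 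Consequently, for such a pair $x,y$,
\begin{equation*}
\tfrac12(x+y)=z_k+r_k,\qquad z_k:=\tfrac12 P_{T(u_k)}(x+y),\qquad \norm{z_k}\leq 1,\quad \norm{r_k}\leq 2\delta .
\end{equation*}
So all that survives of the pair is a piece $z_k$ supported on the subtree $T(u_k)$.

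\textbf{Conclusion.} Fix a selection of one point from each slice of the family, and let $\eps>0$. Choose $m$ with $2/m<\eps$ and $N$ with $1/N<\eps$. For $k=1,\dots,N$, let $x_k,y_k$ be the selected points of $S^+_{k,1/m}$ and $S^-_{k,1/m}$. The trees $T(u_k)$, $k\in\N$, are pairwise totally incomparable, since the $u_k$ form an antichain. By Lemma~\ref{lem:totally_incomparable_decomposition},
\begin{equation*}
\Big\|\sum_{k=1}^N z_k\Big\|=\max_k\norm{z_k}\leq 1 .
\end{equation*}
Therefore
\begin{equation*}
\Big\|\frac1N\sum_{k=1}^N\tfrac12(x_k+y_k)\Big\|\leq \frac1N+\frac2m<2\eps ,
\end{equation*}
so $0\in\cconv$ of the selection, and $0\in\SCD(D)$. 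The main point to check carefully is the localization estimate: that $U_k$ is precisely the union of the branches meeting $\alpha_k$ (this is where the non-maximality of $\alpha_k$, missing only $u_k$, is used), and that a chain inside $U_k$ picks up at most $2\delta$ of error. The remaining steps are direct.
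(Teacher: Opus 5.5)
Your proof is correct and is essentially the paper's argument: slices given by the coordinate sum over a level with one vertex removed, near-maximizers that agree with $\pm\mathds{1}$ up to a small error off the subtree below the missing vertex, cancellation of each $\pm$ pair, and the $c_0$-decomposition over totally incomparable subtrees to make the averaged remainders small. The only difference is bookkeeping. The paper lets the missing vertex $s$ run over a whole level $L_n$ and averages those $2^n$ slices, with remainders on the $T(s)$, $s\in L_n$. You instead take one missing vertex $u_k$ per level along an antichain and average across levels, and you also write out the localization estimate that the paper only calls straightforward.
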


\begin{proof}
   For every $n\in\N$, let $x^{(n)}:=\frac{2^n-1}{2^n}\sum_{t\in L_n}e_t$. Then, for every $s\in L_n$, consider the norm-one functional $g_s:=\frac{1}{2^{n-1}}\sum_{t\in L_n\setminus\{s\}}e_t^*$. It is straightforward to check that for every $(x_{s,k}^{(n)})\in\prod_{s\in L_n}\prod_{k\in\N}S(D,g_s,\frac{1}{k})$, we have \begin{equation*}
        \limsup_{k\to \infty}\norm{x^{(n)}-\frac{1}{2^n}\sum_{s\in L_n}x_{s,k}^{(n)}}\leq \frac{1}{2^n}.
    \end{equation*} Hence, for every $(x_{s,k,\theta}^{(n)})\in \prod_{s\in L_n}\prod_{k\in\N}\prod_{\theta\in\{-1,1\}}S(D,\theta g_s,\frac{1}{k})$, we have \begin{equation*}
        \limsup_{k\to\infty} \norm{\frac{1}{2^{n+1}}\sum_{s\in L_n}x_{s,k,1}^{(n)}+\frac{1}{2^{n+1}}\sum_{s\in L_n}x_{s,k,-1}^{(n)}}\leq \frac{1}{2^{n-2}}.
    \end{equation*} Therefore, $0\in\cconv\{x_{s,k,\theta}^{(n)}\}$ for every $(x_{s,k,\theta}^{(n)})\in\prod_{n\in\N}\prod_{s\in L_n}\prod_{k\in\N}\prod_{\theta\in\{-1,1\}} S(D,\theta g_s,\frac{1}{k})$. The conclusion follows. 
\end{proof}

\section*{Acknowledgements}
The authors would like to thank Zdeněk Silber for introducing the paper by Bang and Odell \cite{BO89}, and Jaan Kristjan Kaasik for the proof of Lemma \ref{lem:Jaani_lemma}.

This research was supported by the Estonian Research Council grant (PRG2545).

\end{document}